\documentclass{amsart}
\usepackage[margin=2cm]{geometry}
\usepackage{graphicx} % Required for inserting images
\usepackage{enumerate}
\usepackage{amssymb}
\usepackage{amsaddr}
\usepackage[utf8]{inputenc}
\usepackage{amsmath}
\usepackage{amsfonts}
\usepackage{amsthm}

\usepackage[hypertexnames=false]{hyperref}
\usepackage{resizegather}
\usepackage{mathtools}
\usepackage{arydshln}
\usepackage{algorithm}
\usepackage{algpseudocode}
\usepackage{float}% http://ctan.org/pkg/float
\usepackage{xcolor}
\usepackage{cleveref}

\newtheorem{thm}{Theorem}[section]
\newtheorem{cor}[thm]{Corollary}

\newenvironment{claimproof}[1][Proof of Claim]{\begin{proof}[#1]}{\end{proof}}
\newtheorem{lem}[thm]{Lemma}
\newtheorem{prop}[thm]{Proposition}
\newtheorem{prob}[thm]{Problem}

\newtheorem{defn}[thm]{Definition}
\newtheorem{examp}[thm]{Example}
\newtheorem{notn}[thm]{Notation}
\newtheorem{obs}[thm]{Observation}
\theoremstyle{remark}
\newtheorem{rem}[thm]{Remark}
\newtheorem{claim}[thm]{Claim}

\numberwithin{equation}{section}

\crefname{lemma}{Lemma}{Lemmas}
\Crefname{lemma}{Lemma}{Lemmas}

\algrenewcommand\algorithmicrequire{\textbf{Input:}}
\algrenewcommand\algorithmicensure{\textbf{Output:}}
\algnewcommand{\return}{\textbf{return }}
\DeclareMathOperator*{\argmin}{argmin}
\DeclareMathOperator*{\argmax}{argmax}
\newcommand{\Mod}[1]{\ (\mathrm{mod}\ #1)}

\numberwithin{equation}{section}

\newcommand{\PP}{\mathcal P}

\newcommand{\snk}{s^{n,k}}

\newcommand{\slack}{\ensuremath{\textrm{slack}}}
\newcommand{\ms}{\ensuremath{\textrm{maxSum}}}
\newcommand{\N}{\mathbb{N}}      % The natural numbers
\newcommand{\NZ}{\mathbb{N}_0}   % The natural numbers + {0}
\newcommand{\R}{\mathbb{R}}      % The natural numbers
\newcommand{\iterI}{{\textrm{Iter}}_1}
\newcommand{\iterII}{{\textrm{Iter}}_2}

\title{Solvable and unsolvable instances of the equal sum partition problem}

\date{}

\author{Shlomo Hoory \and Dani Kotlar}
\address{Department of Computer Science, Tel-Hai University, Kiryat Shmona, Israel}

\email{hooryshl@telhai.ac.il}
\email{dannykot@telhai.ac.il}
\thanks{*Corresponding author: Shlomo Hoory (hooryshl@telhai.ac.il)}
\begin{document}

\maketitle

\begin{abstract}
We consider the equal sum partition problem, motivated by distance magic graph labeling:
Given $n,k \in \N$ such that $k\, | \sum_{i=1}^ni$ and a partition $p_1+\cdots+p_k=n$, when is it possible to find a partition of the set $\{1,2,\ldots,n\}$ into $k$ subsets of sizes $p_1,\dots,p_k$, such that the element sum in each subset is the same? 

A known necessary condition is the \emph{slack condition}, requiring that for all $j$, placing the largest possible elements in the $j$ smallest sets yields a total sum that is at least what is needed. However, this condition is not sufficient, and known counterexamples exist. 

This work clarifies the boundary between solvable and unsolvable instances of the problem. 
We extend the list of unsolvable problem instances satisfying the slack condition by exhibiting infinite families where the $n/k$ ratio is any rational number in the interval $(2,\frac{24}{7})$,
and a new criterion for unsolvability.
Furthermore, we show that the slack condition is natural, as it is both necessary and sufficient for the fractional relaxation of the problem. Based on this result, we prove that the problem is solvable for the class of linear partitions, where $k$ is fixed, $p_1,\ldots,p_k$ grow linearly with $n$, and where the slack condition holds in a strong sense. We do this by applying a randomized rounding algorithm to a solution of the fractional relaxation of the problem and proving that the algorithm has an exponentially small failure probability.

\end{abstract}

\section{Introduction}

\subsection{Background}

A \emph{distance magic labeling} \cite{miller2003} (or \emph{sigma labeling} \cite{beena2009}) of a simple undirected graph with $n$ vertices is a bijective assignment of the numbers in $[n]=\{1,2,\ldots,n\}$ to its vertices, such that the sum of the values assigned to the neighbors of each vertex is the same.

Challenges involving distance-magic labelings, such as deciding whether one exists, and constructing one when it does, are closely tied to questions about partitioning the set $[n]$. A classic example is the following: Given positive integers $n$ and $k$ with $k$ dividing $\sum_{i=1}^n i$, can $[n]$ be partitioned into $k$ subsets whose element sums are all equal? Straight and Schillo~\cite{straight1979} showed that the answer is yes. But what happens when the sizes of these subsets are prescribed? This is the subject of our work.

\begin{prob}[\cite{anholcer2016spectra} Problem 6.3 and \cite{cichacz2018constant}, Problem 1.1]\label{prob:nk_partition}
Let $n,k$ and $p_1,\dots,p_k$ be positive integers such that $p_1+\cdots+p_k=n$ and $k$ divides $\sum_{i=1}^ni$. When is it possible to find a partition of the set $[n]$ into $k$ subsets of sizes $p_1,\dots,p_k$, respectively, such that the sums of the elements in each subset are equal?
\end{prob}

Problem~\ref{prob:nk_partition} was shown to be equivalent to problems of finding a distance magic labeling of complete multipartite graphs (Miller, Rodger and Simanjuntak \cite{miller2003}, Cichacz and G{\H{o}}rlich \cite{cichacz2018constant}), and of graphs obtained by replacing each vertex in $C_k$ by a clique, with a slightly modified definition of distance magic labeling (Anholcer, Cichacz and Peterin \cite{anholcer2016spectra}). For more results and surveys on distance magic labeling, see \cite{anholcer2015distance}, \cite{arumugam2012distance}, and \cite{gallian2018dynamic}. 

A clearly necessary condition for the existence of a partition as described in Problem~\ref{prob:nk_partition} is that, for every $j$, if we place the largest available elements into the $j$ smallest parts, the resulting total must be at least as large as the required sum, possibly with some additional ``slack''. We refer to this requirement as \emph{the slack condition} (see Definition~\ref{def:slack_condition}).

It was shown by Miller, Rodger and Simanjuntak \cite{miller2003} and Beena \cite{beena2009} that the slack condition is sufficient when $k=2,3$. Kotlar \cite{kotlar16} extended the result to $k=4$ and dared to conjecture (as did Arumugam, Froncek, Dalibor and Kamatchi \cite{arumugam2012distance}) that this holds in general. However, Ebrahem Hoory and Kotlar \cite{ebrahem2024} exhaustively checked all instances of Problem~\ref{prob:nk_partition} with $n\le200$, and identified two counterexample structures. Examples~\ref{ex:39:13} and \ref{ex:80:30} illustrate these structures.

In this paper, following the approach taken in two earlier works \cite{kotlar16, ebrahem2024}, we study Problem~\ref{prob:nk_partition} independently of the graph‑labeling questions related to it, as we think the problem is intrinsically interesting.

%%%%%%%%%%%%%%%%%%%%%%%%%%%%%%%%%%%%%%%%%%%%%%%%%%%%%%%%%%%%%%%%%%%%%%%%%%%%%%%%%%%%%%%%

\subsection{Definitions and notation}

For a positive integer $n$ a \emph{composition} of $n$ is a sequence of positive integers $P=[p_1,p_2,\ldots, p_k]$ such that $\sum_{i=1}^k p_i = n$.
Given a composition of $n$ so that $k$ divides $\sum_{i=1}^ni$, we would like to know whether it is possible to find a partition of the set $[n]$ into $k$ subsets of sizes $p_1,\dots,p_k$, respectively, such that the sum of the elements in each subset is the same.
There is a necessary condition for this. To introduce it, we need some more definitions:

\begin{notn}
    Given a composition $[p_1,p_2,\ldots, p_k]$ of $n$, where $k$ divides $\sum_{i=1}^ni$, we denote the target sum for partitioning $[n]$ into $k$ equal sum parts by
    \[
    s^{n,k}=\frac{n(n+1)}{2k}.
    \]
\end{notn}

\begin{defn}\label{def:slack}
    Let $P=[p_1,p_2,\ldots, p_k]$ be a non-descending composition of a positive integer $n$. For $j=1,\ldots,k$ let $P_j=\sum_{i=1}^jp_i$. For $j=1,\ldots,k-1$ let 
    $$\textrm{slack}_j(P) = \sum_{i=1}^{P_j} (n-i+1)- js^{n,k}.$$ 
    We denote the slack of $P$ by
    $$\textrm{slack}(P)=\min_{1\le j<k}\textrm{slack}_j(P).$$

\end{defn}

\begin{rem}
    We do not include $\textrm{slack}_k(P)$ in the minimum expression, as it is always 0.
\end{rem}

\begin{obs}[\cite{anholcer2016spectra}]
    Let $P=[p_1,p_2,\ldots, p_k]$ be a non-descending composition of a positive integer $n$. If it is possible to partition the set $[n]$ into $k$ equal-sum subsets of sizes $p_1,\dots,p_k$, then $\textrm{slack}(P)\ge 0$.
\end{obs}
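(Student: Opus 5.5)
Suppose $[n]=A_1\cup\cdots\cup A_k$ is a partition into equal-sum subsets with $|A_i|=p_i$. Each part then sums to $\snk$, since the $k$ equal sums add up to $\sum_{i=1}^n i=k\snk$. Fix $j$ with $1\le j<k$. The plan is to compare the union $U_j=A_1\cup\cdots\cup A_j$ with the $P_j$ largest elements of $[n]$.

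First, $U_j$ has exactly $P_j=p_1+\cdots+p_j$ elements because the $A_i$ are disjoint. Its element sum is $j\snk$, since each of the $j$ parts contributes $\snk$.

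Second, any $P_j$-element subset of $[n]$ has sum at most the sum of the $P_j$ largest elements of $[n]$. These elements are $n,n-1,\ldots,n-P_j+1$, and their sum is
\[
\sum_{i=1}^{P_j}(n-i+1).
\]
This is the elementary exchange fact: if the $t$-th largest element of a $P_j$-subset is listed in decreasing order, it is at most $n-t+1$. Summing over $t=1,\ldots,P_j$ gives the bound.

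Combining the two steps gives $j\snk\le\sum_{i=1}^{P_j}(n-i+1)$, that is, $\slack_j(P)\ge 0$. Since this holds for every $1\le j<k$, taking the minimum yields $\slack(P)\ge 0$.

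The non-descending order of $P$ is not needed for this inequality. It only ensures that each $\slack_j$ is the strongest such constraint, since the first $j$ sizes are then the $j$ smallest. No step presents a real obstacle; the only point needing care is that the equal sums are exactly $\snk$.
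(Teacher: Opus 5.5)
Your proof is correct and is essentially the paper's argument. The paper gives no formal proof of this observation; it cites Anholcer et al.\ and motivates it in the introduction by the same comparison you make, between the union of the first $j$ parts (sum $j\snk$, size $P_j$) and the $P_j$ largest elements of $[n]$. Theorem~\ref{thm:fluid_mixing_implies_slack} later proves the fractional generalization of this statement by summation by parts.
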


\begin{defn}\label{def:slack_condition}
    We call the condition ${\textrm slack}(P)\ge0$ \emph{the slack condition}. If this condition holds for $P$, we say that $P$ satisfies the slack condition.
\end{defn}

As satisfying the slack condition is a necessary condition for the existence of a partition of the set $[n]$ into $k$ equal-sum subsets of sizes given by $P$, we make it part of the definition:

\begin{defn}
    A triple $(n, k, P)$ is an instance of the Equal Sum Partition Problem (ESPP instance, for short), if $n$ and $k$ are positive integers such that $k$ divides $\frac{n(n+1)}{2}$, and $P=[p_1,p_2,\ldots, p_k]$ is a non-descending composition of $n$ satisfying the slack condition.     
    We say that an ESPP-instance $(n, k, P)$ is solvable if it is possible to partition the set $[n]$ into subsets of sizes $p_1,p_2,\ldots, p_k$ respectively, so that the sum of the elements in each set is the same. 
\end{defn}

\begin{rem}
    Since the composition $P$ in an ESPP $(n,k,P)$ contains the information about $n$ and $k$, we shall sometimes use $P$ instead of the triple $(n,k,P)$ to refer to an ESPP instance. For example, we will say that $P$ is solvable, meaning that the ESPP instance $(n,k,P)$ is solvable.
\end{rem}

\begin{notn}
    Let $(n,k,P)$ be an ESPP instance. The notation $P=[q_1^{e_1},q_2^{e_2},\ldots, q_t^{e_t}]$ means that $q_1<q_2<\cdots<q_t$ and for $i=1,\ldots,t$, $P$ has $e_i$ parts of size $q_i$. Each $q_i^{e_i}$ is a block, or the $q_i$-block. 
\end{notn}

\subsection{Our results}

In this work, we expand the boundary of what is known about ESPP instances in both directions: non-solvable and solvable. 
For the non‑solvable direction, we show a new criterion for unsolvability of ESPP instances (\Cref{thm:criterion3}).
Additionally, based on a criterion from \cite{ebrahem2024}, infinitely more unsolvable instances are provided by the following theorem:

\begin{thm}\label{thm:inifinite_family}
For any rational number $2<a<\frac{24}{7}$, there is an infinite family of ESPP instances $(n,k,P_k)$, where $a=n/k$, that are not solvable.
\end{thm}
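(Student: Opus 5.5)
The plan is to build, for each rational $a=u/v\in(2,\frac{24}{7})$, a one-parameter family of compositions. I would take $k=vt$ and $n=ut$ along an arithmetic progression of $t$ chosen so that $k\mid n(n+1)/2$. Each $P_k$ consists of a block of $2$'s, plus a few larger blocks whose sizes grow linearly with $t$ in fixed proportions so that the total is $n$. Unsolvability would then come from the criterion of \cite{ebrahem2024} that underlies the counterexample structures of Examples~\ref{ex:39:13} and~\ref{ex:80:30}. I take that criterion to be of the following type: a block of small parts is forced to use elements in a rigid way, and the leftover elements cannot fill the remaining parts. The slack condition would be checked directly from Definition~\ref{def:slack}.

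The mechanism I expect is as follows. Since $s^{n,k}=\frac{a(n+1)}{2}$ and $a>2$, we have $s^{n,k}>n+1$. Hence a $2$-part $\{x,s^{n,k}-x\}$ must have both elements in the interval $[s^{n,k}-n,\,n]$. This interval contains only about $(2-\frac a2)n$ integers, so there are roughly $(1-\frac a4)n$ disjoint such pairs available. The $2$-block therefore consumes a prescribed portion of the top of $[n]$. I would choose the number of $2$'s, as a linear function of $t$, just below the threshold at which this is outright impossible (which slack would detect). The remaining parts must then be filled mostly from small elements, plus whatever large elements the $2$-parts did not use.

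The contradiction would come from the second smallest block, say of size $3$ (or $q$). Its parts need sum $s^{n,k}$. Combined with the loss of the large elements to the $2$-parts, I would bound the total achievable sum for this block from above. This uses a counting argument on how many elements above $s^{n,k}/3$ survive. It would give a strict inequality that the slack computation does not see, because slack greedily hands the largest elements to the $2$-parts and does not use the fact that those elements must come in complementary pairs.

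Concretely, the steps in order are these. First, fix the block structure and parameters as affine functions of $t$, and verify the divisibility condition along a suitable residue class of $t$. Second, compute each $\textrm{slack}_j(P)$. Since only a bounded number of blocks appear, $\textrm{slack}_j$ as a function of $j$ is piecewise quadratic in $j$ and $t$. I would show it is nonnegative, indeed of order $t^2$, at the block endpoints and concave within blocks. Third, apply the unsolvability criterion via the pairing and counting argument above.

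The main obstacle is reconciling the two requirements over the whole interval $(2,\frac{24}{7})$. The number of $2$'s must be large enough to force the contradiction yet small enough to keep every slack nonnegative. I expect the feasible window to be nonempty exactly when $a<\frac{24}{7}$: this endpoint should arise as the point where the leading coefficients in $t$ of the two inequalities coincide. So my first step would be to solve those two linear-in-proportion inequalities for the block proportions. I would then use the remaining freedom in the proportions to realize any prescribed rational $a$, adjusting by $O(1)$ parts to fix the integrality of block sizes.
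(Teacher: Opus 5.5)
Your outline has the same architecture as the paper's proof. In the paper's notation it is: $uk$ parts of size $2$ with $u$ just below the $2$-block slack threshold $a-\frac{a^2}{4}$, then $vk$ parts of some size $d$, Criterion~1 of \cite{ebrahem2024}, limiting inequalities in $(u,v)$, and a residue class of $k$ for integrality. But the proposal stops exactly where the proof's content begins. You never show that admissible $(d,u,v)$ exist for every $a\in(2,\frac{24}{7})$, and the concrete choices you suggest would not produce them.

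\emph{The second block cannot have size $3$ throughout.} With $d=3$ and $a<3$ we have $3(s^{n,k}-n)<s^{n,k}$, so condition (b) of Criterion~1 is automatic. The criterion then reduces to $v>2a-\frac{a^2}{2}-2u$. Slack at the end of the $3$-block gives $v<f(u)$ with $f$ concave. Both bounds vanish at $u_1=a-\frac{a^2}{4}$, and $f'(u_1)=-\frac23-\frac{a}{6(3-a)}\ge-2$ when $a\le\frac83$. Hence no $(u,v)$ with $u<u_1$ is feasible. The paper instead takes $d=1+\lceil 2/(a-2)\rceil$, which tends to infinity as $a\to2$. It then must also verify $u>\frac{d-a}{d-2}$ and $u+v<1$, so that the instance can be completed with parts of size at least $d$. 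None of this appears in your plan.

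\emph{The obstruction is not a count of surviving elements above $s^{n,k}/3$.} The relevant set is $C=[c,n]$ with $c=s^{n,k}-n$, the only elements a $2$-part can use. If $h$ elements of $C$ remain and the $d$-block has $f>h$ parts, then $f-h$ of those parts lie in $[1,c-1]$. One then needs the \emph{sum} of the $d(f-h)$ largest elements of $[1,c-1]$ to be below $(f-h)s^{n,k}$. For $a>3$, about $(\frac a3-1)n=\frac{a(a-3)}{3}k$ elements of $[s^{n,k}/3,c)$ are untouched by the $2$-parts. As $a\to\frac{24}{7}$, the region allowed by the slack conditions and Criterion~1 shrinks to the point $(u_1,v_1)$, where the $3$-block has only $v_1k=\frac{2a(a-3)}{9}k$ parts. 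So near the top of the range the $3$-block has fewer parts than these untouched elements, and counting proves nothing there.

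\emph{The final step is misdescribed.} The end-of-$d$-block slack inequality is quadratic, not linear, in $(u,v)$. Also, $\frac{24}{7}$ is not a coincidence of leading coefficients. It comes from the tangency condition $f'(u_1)<-2$ at the corner $(u_1,v_1)=(a-\frac{a^2}{4},\frac{a^2(d-1)-2ad}{d^2})$, where the $2$-block slack, the $d$-block slack and Criterion~1 are simultaneously tight. This condition reads $a<\frac{4d(d-1)}{2d^2-5d+4}$, which equals $\frac{24}{7}$ at $d=3$ and follows from the choice of $d$ when $d\ge4$.

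The substance of the proof, which is missing from your proposal, is to show three things. First, that this corner is a limit of feasible points. Second, that it satisfies the remaining constraints strictly. Third, that the lower root of the $d$-block slack quadratic imposes no constraint.
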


For the solvable direction, we analyze linear families $\PP_{\alpha_1,\ldots,\alpha_k}$, each consisting of instances of the form $(n,k,P_n=[\alpha _1n,\ldots ,\alpha _kn])$, where $[\alpha_1,\ldots ,\alpha_k]$ is a non-descending sequence of positive rational numbers summing to 1, and satisfying a strengthened version of the slack condition (Definition~\ref{def:slack_alphas}). Using a randomized rounding algorithm (Algorithm~\ref{alg:linear_partition}), we construct an equal‑sum partition for any such instance, and our main result is:

\begin{thm}\label{thm:alg_solves_linear_partition_whp}
    Let $\PP_{\alpha_1,\ldots,\alpha_k}$ be a linear partition problem family. 
    Then, there exist $\delta>0$ and $N \in \N$ so that for every $n \ge N$, the probability that Algorithm~\ref{alg:linear_partition} fails to solve $(n,k,P_n) \in \PP_{\alpha_1,\ldots,\alpha_k}$ is at most $e^{-\delta n}$.
\end{thm}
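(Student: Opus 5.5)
The plan is to analyse Algorithm~\ref{alg:linear_partition} in three stages: a fractional solution with room to spare, randomized rounding, and a deterministic repair step. I will assume, as the abstract indicates, that the algorithm first solves the fractional relaxation. By the result that the slack condition is necessary and sufficient for the fractional problem, there is a fractional assignment $x_{i,j}\in[0,1]$ of elements $i\in[n]$ to parts $j\in[k]$ with
\[
\sum_j x_{i,j}=1,\qquad \sum_i x_{i,j}=p_j,\qquad \sum_i i\,x_{i,j}=s^{n,k}.
\]
The strengthened slack condition of Definition~\ref{def:slack_alphas} says that $\slack_j(P_n)\ge c n^2$ for a constant $c>0$. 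I will use this to choose $x$ "robustly". The first step is to perturb $x$ toward a balanced interior point, so that the fractional solution still has slack of order $n^2$. Concretely, I would aim for a solution in which, in each window of $\epsilon n$ consecutive elements, every part $j$ receives a fractional mass bounded below by a constant fraction of $\alpha_j\epsilon n$. Linearity in $n$ lets me build $x$ by scaling a single continuous solution on $[0,1]$, which makes these constants independent of $n$.

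The second step is randomized rounding. I will assign each element $i$ independently to part $j$ with probability $x_{i,j}$. The part sizes and sums then have expectations $p_j$ and $s^{n,k}$. Hoeffding's inequality gives sizes within $\eta n$ of the target and sums within $\eta n^2$, each failure occurring with probability $e^{-\Omega(\eta^2 n)}$. The same bound holds for the window counts. A union bound over $k$ parts and $O(1/\epsilon)$ windows keeps the total failure probability below $e^{-\delta' n}$.

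The third step, the repair phase, is where I expect the main obstacle. Here the algorithm fixes the $O(\eta n)$ size errors and the $O(\eta n^2)$ sum errors by swaps and moves. First I will fix the sizes by moving elements between parts; each move changes the sums by at most $n$, for a total change of $O(\eta n^2)$. Then I will fix the sums using swaps of elements $a<b$ between two parts, which shift sum by $b-a$ while preserving sizes. The window property guarantees that every pair of parts has $\Omega(n)$ elements in every region. A greedy sequence of swaps can therefore move a part's sum by any amount up to order $\epsilon n^2$, and it finishes exactly using small differences. The key claim is that a part whose sum is too low, with a deficit of at most $O(\eta n^2)$, can exchange its small elements for larger ones from a part with excess. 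The needed supply comes from the robust fractional solution, which ensures that no part is at its extreme configuration. Making this rigorous means showing that the deviations, at most $C\eta n^2$, fall below the guaranteed exchange capacity, which is at least $c'\epsilon n^2$; choosing $\eta\ll\epsilon\ll c$ achieves this. The swaps must also not exhaust the windows. I would handle this by only performing swaps between elements that were not already touched, since $O(\eta n)$ touched elements are negligible against the $\Omega(\epsilon n)$ available per window. The repair then succeeds deterministically on the good event, which proves the theorem for $n\ge N$.
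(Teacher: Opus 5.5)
There is a genuine gap. Your architecture matches the paper's: a fractional solution, independent rounding with Hoeffding, size-fixing moves that shift each sum by $O(\eta n^2)$, then size-preserving swaps whose capacity exceeds the remaining deviation. But the theorem is about Algorithm~\ref{alg:linear_partition} as written, so you are not free to choose or perturb $x$. It is the output of Algorithm~\ref{alg:fluid_mixing_solution_rec}, and that output does not have your window property. Algorithm~\ref{alg:fluid_mixing_solution_rec} pours the largest elements greedily into the densest (smallest) target, so $x_{i,k}=0$ for all $i<I_k$, and $I_k$ is typically linear in $n$. For example, with $k=2$ and $(\alpha_1,\alpha_2)=(\frac25,\frac35)$ (slack $0.07>0$), the larger part receives none of the roughly $n/6$ largest elements.

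The missing ingredient is Lemma~\ref{lem:non_vanishing_last_block}. The strict asymptotic slack makes $\pi_n$ $\epsilon'$-robust. Robustness keeps every merge ratio $\gamma_j$ bounded away from $0$ and $1$, and it is inherited by the merged problem. Hence $x_{i,j}>\delta$ for all $j$ and all $i>I_k$. This tail block has length at least $\alpha_k n$, since column $k$ sums to $\alpha_k n$ with entries at most $1$, and this single block of small elements is all the paper needs. Your route could at most give Corollary~\ref{cor:linear_partition_is_solvable} for a modified algorithm. Even then, naively mixing a solution with the uniform matrix $U_{i,j}=\alpha_j$ breaks the equal-sum constraint unless all $\alpha_j$ are equal. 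One would take $x=\theta U+(1-\theta)y$, with $y$ solving the fluid problem whose targets are $b'_j=(\snk-\theta\alpha_j M)/(1-\theta)$, $M=n(n+1)/2$; its slack drops by at most $\theta M/(1-\theta)$.

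Second, your repair phase is not the algorithm's second loop, and your exact-finishing step is unjustified. The loop always swaps between the current $\argmax$ and $\argmin$ parts, takes the largest difference $z'-z''\le\Delta$, and may reuse previously moved elements. It returns Failure the first time no pair with $0<z'-z''\le\Delta$ exists. So restricting to untouched elements is not available to you. Near the end you need, for each ordered pair $j'\ne j''$, pairs $z'\in Q_{j'}$, $z''\in Q_{j''}$ at every \emph{exact} small difference. Window counts carry no such arithmetic information: two parts can meet every window while all their mutual differences avoid a residue class.

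The paper gets $R_{j',j'',\Delta}\ge\eta n$ simultaneously for all $\Delta\le dn$ from entrywise lower bounds plus independence. In the tail block, the pairs $\{i,i+\Delta\}$ with $i\bmod 2\Delta<\Delta$ are disjoint. Each realizes $i\in Q_{j''}$, $i+\Delta\in Q_{j'}$ independently with probability $>\delta^2$, so Chernoff and a union bound over $O(n)$ values of $\Delta$ apply. The paper then bounds the iterations of this specific greedy loop by $\lceil kZ/(2D)\rceil+2k\lceil\log_2 D\rceil$, using stages in which $\min(y_{\max},-y_{\min})$ halves. With $Z=\epsilon n^2$, $D=d'n$ and $\epsilon<2d'\eta'/k$, each swap destroys only $O(1)$ of the pairs, so the supply is never exhausted.
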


The randomized rounding algorithm is based on our solution (\Cref{alg:fluid_mixing_solution_rec}) to a version of the fluid mixing problem in operations research. This problem arises as a fractional version of the equal-sum partition problem. We also define an analogue of the slack property, which coincides with the usual slack in the integral case. We do not describe this problem here, as its definition is elaborate, but we state the main result in this context: 

\begin{thm}\label{thm:fractional}
    An instance of the fluid mixing problem is solvable if and only if it satisfies the fractional slack condition.
\end{thm}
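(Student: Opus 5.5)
The plan is to prove necessity by a rearrangement argument and sufficiency by a recursive construction that fills one container at a time, as in \Cref{alg:fluid_mixing_solution_rec}. The fluid mixing problem is set up as follows. There is a supply of fluid given by a measure $\mu$ on values (in the integral case, unit mass at each of $1,\dots,n$). There are containers of capacities $p_1\le\cdots\le p_k$, and the task is to split $\mu=\mu_1+\cdots+\mu_k$ with $\mu_j$ of mass $p_j$ and total value $\int x\,d\mu_j=s$, where $ks$ is the total value. The fractional slack condition then says: for each $j<k$, the top $P_j$ units of mass of $\mu$ (in quantile order) carry value at least $js$.

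\textbf{Necessity.} Take any solution and any $j<k$. The $j$ smallest containers hold total mass $P_j$ and total value $js$. Among all submeasures of $\mu$ of mass $P_j$, the one taking the top quantiles has maximal value; this is a bathtub/rearrangement inequality. Hence the top $P_j$ units of mass carry value at least $js$, which is the slack condition.

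\textbf{Sufficiency, by induction on $k$.} The case $k=1$ is trivial. For $k\ge 2$, first note two consequences of the slack condition.
\begin{itemize}
\item From $\slack_1\ge0$, the top $p_1$ units of mass carry value $\ge s$.
\item From $\slack_{k-1}\ge 0$ together with total value $ks$, the bottom $p_k$ units carry value $\le s$. Since $p_1\le p_k$, the bottom $p_1$ units also carry value $\le s$.
\end{itemize}
Now consider quantile windows $W_t$ of mass $p_1$ starting at quantile $t$. Their value is continuous and non-decreasing in $t$, so some window has value exactly $s$. Put $W_t$ into container $1$, leaving the residual measure $\mu'=\mu-W_t$ with $k-1$ containers $p_2,\dots,p_k$ and the same target $s$. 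The induction hypothesis applies to $\mu'$ as long as $\mu'$ is again an admissible fluid and satisfies the fractional slack condition. For this reason the problem has to be stated for general measures (density at most $1$ with respect to counting/Lebesgue measure), not only for the uniform supply $[n]$. That generality is presumably why the fluid mixing formulation is needed at all.

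\textbf{Main obstacle: slack is preserved.} For $1\le j\le k-2$ I need
\[
\mathrm{top}_{\mu'}(P_{j+1}-p_1)\ \ge\ js.
\]
Removing a window can hurt the top part of the residual, so I would handle it in two cases.
\begin{itemize}
\item If the window lies entirely within the top $P_{j+1}$ mass, then $\mathrm{top}_{\mu'}(P_{j+1}-p_1)\ge \mathrm{top}_\mu(P_{j+1})-s\ge (j+1)s-s$.
\item If the window lies entirely below, then the top part is untouched, and the bound comes from $\slack_j(\mu)$ applied with the same mass $P_j\le P_{j+1}-p_1$.
\end{itemize}
The straddling case is the delicate one. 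There I would interpolate between the two previous cases, using that the value per unit mass is monotone along quantiles, so the loss is at most a convex combination of the two previous bounds.

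If the contiguous window fails to preserve slack in some straddling configuration, my fallback is to replace it by a two-piece choice: mass $\lambda p_1$ from the top and $(1-\lambda)p_1$ from the bottom, with $\lambda$ tuned to give value $s$. This takes as little as possible from the middle and is easier to control against the remaining slack inequalities. Alternatively, sufficiency can be obtained by LP duality/Farkas for the linear feasibility system in the variables $x_{ij}$, with the slack inequalities identified as the extreme dual certificates. I would keep that as a backup route.
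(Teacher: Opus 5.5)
\textbf{Gap: the argument covers only equal target masses.} Your necessity argument is correct. It is the bathtub principle, which the paper implements by Abel summation, and it carries over verbatim to general targets, since the first $l$ targets hold volume $V_l$ and mass $b_1+\cdots+b_l$.

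Your sufficiency route is viable: fill the first target with a contiguous quantile window of exactly the right mass, then recurse on the residual. As written, however, it proves only the special case $b_1=\cdots=b_k$. The theorem concerns arbitrary instances $\pi=(\overline{a},\overline{u},\overline{b},\overline{v})$. Their targets are ordered only by density, $b_1/v_1\ge\cdots\ge b_k/v_k$, so the volumes $v_j$ need not be sorted. Two of your steps depend on equal target masses:
\begin{itemize}
\item The existence of the window uses $p_1\le p_k$, but in general the densest target may be the largest.
\item The ``untouched'' case uses $P_j\le P_{j+1}-p_1$. In general $y=V_{j+1}-v_1=v_2+\cdots+v_{j+1}$ can be smaller than $V_j$. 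What you actually need is $F(y)\ge b_2+\cdots+b_{j+1}$, where $F(x)$ (your $\mathrm{top}_\mu(x)$) is the largest source mass obtainable in volume $x$. This is not one of the slack inequalities, which constrain $F$ only at the breakpoints $V_l$.
\end{itemize}

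\textbf{The missing idea: extend the slack inequalities to all volumes.} Let $T$ be the function on $[0,V_k]$ with $T(0)=0$ that is linear with slope $b_l/v_l$ on each $[V_{l-1},V_l]$. It is concave, $T(V_l)=b_1+\cdots+b_l$, and $T(x)$ is the largest target mass fitting in volume $x$. Now $F$ is concave, $F(0)=T(0)$, $F(V_k)=T(V_k)$, and $\slack_l(\pi)\ge 0$ says $F(V_l)\ge T(V_l)$. Since $T$ is linear on each segment, you get $F\ge T$ on all of $[0,V_k]$. This repairs both steps:
\begin{itemize}
\item The least dense $v_1$ units of source carry mass $F(V_k)-F(V_k-v_1)\le T(V_k)-T(V_k-v_1)\le b_1$, because every slope of $T$ is at most $b_1/v_1$.
\item $F(y)\ge T(y)\ge b_2+\cdots+b_{j+1}$, because targets $2,\dots,j+1$ are one way of filling volume $y$.
\end{itemize}
With this, your induction proves the theorem in full.

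The straddling configuration you single out as the main obstacle does not arise. Split according to whether the window starts at depth at most $y$ or deeper. In the first case it ends by depth $y+v_1=V_{j+1}$, and your first bullet applies. In the second case the top $y$ units of the source avoid the window, so the residual has the same top $y$ units and the bound on $F(y)$ applies. The interpolation, the two-piece fallback and the Farkas backup are therefore unnecessary. The class to induct over is simply all finite measures on densities, with no density-at-most-one restriction.

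\textbf{Comparison with the paper's route.} Algorithm~\ref{alg:fluid_mixing_solution_rec} never commits a whole target at once. It pours from the densest source into the densest target until one of three things happens: the source empties, the target fills, or the density still required by the target drops to that of the next target, which is then merged with it. Lemmas~\ref{lem:null_u}--\ref{lem:pour_from_first} show that each reduction preserves the surviving slack values exactly. Your argument is shorter and spends slack instead. Its output, however, is nearly $0/1$: at most $2k$ source containers are split among several targets. That suffices for the theorem. It lacks, though, the block structure of Lemma~\ref{lem:fluid_mixing_solution_structure} and the uniform lower bound of Lemma~\ref{lem:non_vanishing_last_block}, on which the randomized rounding of Section~\ref{sec:linear} relies. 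This is why the paper proves sufficiency through its particular algorithm.
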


%%%%%%%%%%%%%%%%%%%%%%%%%%%%%%%%%%%%%%%%%%%%%%%%%%%%%%%%%%%%%%%%%%%%%%%%%%%%%%%%%%%%%%%%%%%%%%%%%%%%%%%%%%%

\section{Non-solvable ESPP instances}\label{sec:counterexamples}
In this section, we study families of non-solvable ESPP instances. In the first part, we introduce the useful notion of an \emph{incomplete ESPP instance}. In the second part, we introduce a new criterion for unsolvability. In the third part, we show that a criterion from a previous work (\cite{ebrahem2024}, Criterion 1) defines infinite families of such instances.

\subsection{Incomplete ESPP instances} 
It is possible to identify the unsolvability of some ESPP instances by looking at a prefix of the corresponding nondescending composition of $n$. For this, we need some new notions.

\begin{defn}\label{def:incomplete_composition}
    An incomplete non-descending composition of a positive integer $n$ is a non-descending sequence of positive integers $Q=[p_1,\ldots,p_l]$ such that $k<l$ and 
    \begin{equation}\label{def:inc_part_cond}
        n-\sum_{i=1}^l p_i \geq (k-l)p_l
    \end{equation}
    A non-descending composition $P$ of $n$ is a completion of an incomplete non-descending composition $Q$ if $Q$ is a prefix of $P$.
\end{defn}

\begin{prop}[\cite{ebrahem2024}, Lemma 6.3]\label{lem:incomplete_partition_completion}
    Any incomplete non-descending composition of a positive integer $n$ has a completion.
\end{prop}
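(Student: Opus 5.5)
We read the definition with the evident typo corrected: $k$ is the prescribed number of parts and $l<k$, so $Q=[p_1,\ldots,p_l]$ must be extended by $k-l\ge 1$ further parts. We only need a non-descending composition of $n$ into $k$ parts that has $Q$ as a prefix. The definition of a completion does not require the slack condition, so the task is purely combinatorial and a direct construction will do.

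Let $R=n-\sum_{i=1}^l p_i$ and $m=k-l\ge 1$. We must find integers $p_{l+1}\le\cdots\le p_k$ with $p_{l+1}\ge p_l$ and $\sum_{i=l+1}^k p_i=R$. Write $R=qm+r$ with $0\le r<m$, where $q=\lfloor R/m\rfloor$. Then set $p_{l+1}=\cdots=p_{k-r}=q$ and $p_{k-r+1}=\cdots=p_k=q+1$, so that the parts are as equal as possible, with the larger ones placed last.

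Three checks remain. First, the new parts sum to $(m-r)q+r(q+1)=qm+r=R$, so all $k$ parts sum to $n$. Second, the new parts are non-descending by construction. Third, the junction with $Q$ is respected: condition \eqref{def:inc_part_cond} states $R\ge m p_l$, so $R/m\ge p_l$, and since $p_l$ is an integer, $q=\lfloor R/m\rfloor\ge p_l$. Hence $p_{l+1}=q\ge p_l$. Positivity also follows, since $q\ge p_l\ge 1$.

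Therefore $P=[p_1,\ldots,p_k]$ is a non-descending composition of $n$ whose first $l$ entries are $Q$, that is, a completion of $Q$. There is no real obstacle. The only delicate point is the floor step in the third check, which uses integrality of $p_l$, together with reading the definition with $l<k$ so that $m\ge1$ and the division by $m$ is legitimate.
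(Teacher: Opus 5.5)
Your argument is correct and complete for the proposition as stated. Reading $l<k$, and taking a completion to be a non-descending composition of $n$ into exactly $k$ parts with prefix $Q$, the balanced split $R=qm+r$ with the $r$ larger parts placed last, together with $q=\lfloor R/m\rfloor\ge p_l$ from \eqref{def:inc_part_cond}, is all that is needed. The paper gives no proof of its own here; it only imports Lemma~6.3 of \cite{ebrahem2024}, so there is no competing route to compare.

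Be aware that the paper immediately uses this proposition in a stronger form. It concludes that every incomplete ESPP instance $(n,k,Q)$ has a completion to an ESPP instance, and the proof of Theorem~\ref{thm:inifinite_family} needs exactly that, which also requires $\slack(P)\ge 0$. Your balanced completion has this property too.

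For $j\le l$ we have $\slack_j(P)=\slack_j(Q)\ge 0$. For $j=k-t$ with $0\le t\le m$, the identity $\sum_{i=1}^n(n-i+1)=k\snk$ gives $\slack_{k-t}(P)=t\snk-T(N_t)$. Here $T(x)=x(x+1)/2$ and $N_t$ is the sum of the last $t$ parts, so $N_t=t(q+1)$ for $t\le r$ and $N_t=tq+r$ for $r\le t\le m$. On each of these two ranges the expression is concave in $t$, so it suffices to check the endpoints:
\begin{itemize}
\item $t=0$ gives $0$;
\item $t=m$ gives $\slack_l(Q)\ge 0$, which is why $j=l$ is included in $\slack(Q)$;
\item $t=r\ge 1$ is handled by the expansion
\[
  2\bigl(r\,T(R)-m\,T(r(q+1))\bigr)=r(m-r)(mq^2-r-1)\ge 0 ,
\]
which holds since $q\ge 1$ and $r\le m-1$. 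It yields $\slack_{k-r}(P)\ge \frac{r}{m}\,\slack_l(Q)\ge 0$.
\end{itemize}
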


\begin{defn}\label{def:partial_slack}
    Let $n$ and $k$ be positive integers such that $\snk$ is an integer, and let $Q=[p_1,\ldots,p_l]$ be an incomplete non-descending composition of $n$. We define $\textrm{slack}_j(Q)$, for $j\le l$, as in Definition~\ref{def:slack}, and $\textrm{slack}(Q)=\min_{1\le j\le l}\textrm{slack}_j(Q)$.
\end{defn}
Note that while the minimum expression for $\slack(P)$ when $P$ is a non-descending composition of $n$ omits the last index $j=k$, the minimum for $\slack(Q)$, when $Q$ is an incomplete non-descending composition, includes $l$. 

A slightly different version of the following lemma appears in  \cite{ebrahem2024} (Lemma 6.4) for a non-descending composition. We state it here for an incomplete non-descending composition. We omit the proof, as it is the same as the one in \cite{ebrahem2024}. 

\begin{lem}\label{lem:slack_at_end_of_block}
In the setup of Definition~\ref{def:partial_slack}, the value of $\slack(Q)=\min_{1\le j\le l}\slack_j(Q)$ is obtained at $j$ such that $p_j$ is the last part in $Q$ of its size. 
\end{lem}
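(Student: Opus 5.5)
The plan is to show that within any maximal run of equal parts, $\slack_j(Q)$ is a concave function of $j$. A concave function on an interval of integers attains its minimum at an endpoint of that interval. Once this is in place, only two kinds of indices can carry the minimum: the last index of a block, and the index just before the first index of the next block. The latter is itself the last index of the preceding block, or it is $j=0$.

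Concretely, write $\slack_j(Q)=\sum_{i=1}^{P_j}(n-i+1)-js^{n,k}$ and set $\slack_0(Q)=0$. Consider the increment
\[
\Delta_j=\slack_j(Q)-\slack_{j-1}(Q)=\sum_{i=P_{j-1}+1}^{P_{j-1}+p_j}(n-i+1)-s^{n,k},
\]
that is, the sum of the $p_j$ largest elements remaining after $P_{j-1}$ have been used, minus the target sum. Fix a block of indices $a\le j\le b$ with $p_a=\cdots=p_b=q$. Moving from $j$ to $j+1$ inside the block shifts the window of $q$ consecutive integers down by exactly $q$. Hence $\Delta_{j+1}=\Delta_j-q^2$, so the increments strictly decrease along the block and $\slack_j$ is concave on $[a-1,b]$. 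Its minimum over $\{a-1,\dots,b\}$ is therefore attained at $j=a-1$ or at $j=b$.

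To finish, let $j^*$ be a minimizer of $\slack_j(Q)$ over $1\le j\le l$, and suppose $j^*$ is not the last index of its block. Then $j^*$ lies in some block $[a,b]$ with $a\le j^*<b$. By concavity, $\slack_{j^*}\ge\min(\slack_{a-1},\slack_b)$. If $\slack_b$ is the smaller of the two, then $b$ is a block end achieving the minimum. If $a\ge 2$, then $a-1$ is the last index of the previous block, and it also achieves the minimum.

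The remaining case, and the main obstacle, is $a=1$, where the endpoint is $j=0$ and $\slack_0=0$ is not among the indices allowed in the statement. Here I would argue that the minimum over the first block is still attained at its last index $b$, using the incomplete-composition condition $n-\sum_{i\le l}p_i\ge(k-l)p_l$. If that does not work, I would fall back on the argument given for Lemma 6.4 of \cite{ebrahem2024}: in the first block, either every $\Delta_j\ge0$, in which case the minimum on the block is at $j=1$ and one compares it directly with later values, or the concavity drop forces the minimum to $b$.

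The fallback to \cite{ebrahem2024} is justified because the paper states that the proof is identical to that of Lemma 6.4 there. The concavity computation $\Delta_{j+1}=\Delta_j-q^2$ is the core of the argument and is purely local. In particular, it does not use that $Q$ is complete, which is why the statement transfers verbatim from complete compositions to incomplete ones, including the index $j=l$.
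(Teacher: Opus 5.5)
Your concavity computation is right. Inside a block of parts equal to $q$ one has $\Delta_{j+1}=\Delta_j-q^2$, so $\slack_j(Q)$ is concave on $\{a-1,\dots,b\}$. Hence every index in a block $[a,b]$ with $a\ge 2$ is dominated by one of the block ends $a-1$ or $b$.

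The gap is the case $a=1$, which you leave open. It cannot be closed, because the statement is false there. Take $n=8$, $k=4$, so $s^{8,4}=9$, and $Q=[2,2]$. The incomplete-composition condition holds with equality ($8-4=4\ge(4-2)\cdot 2$), and $\slack(Q)\ge 0$. We get
\[
\slack_1(Q)=8+7-9=6,\qquad \slack_2(Q)=8+7+6+5-18=8.
\]
So the minimum is attained only at $j=1$, and $p_1$ is not the last part of size $2$. Adding a block does not help: for $n=20$, $k=5$, $Q=[3,3,4]$ the slacks are $15,21,29$.

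In general, on a first block $[q^b]$ your own increments give $\slack_b(Q)-\slack_1(Q)=(b-1)\bigl(\slack_1(Q)-bq^2/2\bigr)$. This is positive whenever $\slack_1(Q)>bq^2/2$, and neither of your repairs rules it out. The incomplete-composition inequality is tight in the first example. Comparison with later block ends fails because there are none. Deferring to \cite{ebrahem2024} cannot help either, since no argument proves a false statement.

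What your argument does prove, using $\slack_0(Q)=0$ as the left endpoint of the first block, is $\slack_j(Q)\ge\min\bigl(\slack_{a-1}(Q),\slack_b(Q)\bigr)$ for every $j$ in a block $[a,b]$. Hence
\[
\slack(Q)\ \ge\ \min\Bigl(0,\ \min_{b \text{ a block end}}\slack_b(Q)\Bigr).
\]
Equivalently, the minimum is attained at a block end or at $j=1$. The first form says that nonnegative slack at all block ends implies $\slack(Q)\ge 0$. That is all the paper actually uses, in the proof of Lemma~\ref{lem:inifinite_family_extend}, where it checks only $j=uk$ and $j=(u+v)k$.

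So you should state and prove this weaker version, or add $j=1$ to the candidate indices. Do not try to force the minimum of the first block to its last index.
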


\begin{defn}\label{def:inc_part}
    An \emph{incomplete instance of an equal sum partition problem} (incomplete ESPP instance) is a triple $(n,k,Q)$, where $n$ and $k$ are positive integers such that $\snk$ is an integer, $Q$ is a non-descending incomplete composition of $n$, and $\textrm{slack}(Q)\ge0$.  
\end{defn}

\begin{defn}
    Given an incomplete ESPP instance $I=(n,k,Q)$, a completion of $I$ to an ESPP instance is an ESPP instance $(n,k,\PP)$, such that $P$ is a completion of $Q$. 
\end{defn}

Note that by Proposition~\ref{lem:incomplete_partition_completion}, any incomplete ESPP instance has a completion.

We quote {\bf Criterion 1} from \cite{ebrahem2024}:

\begin{prop}[\cite{ebrahem2024}, Theorem 4.2]\label{prop:criterion1}
    Let $I=(n,k,[2^e, d^f])$ be an incomplete ESPP instance, where $e,f>0$ and $d \ge 3$. Let $C = \{x\in [n]:x \ge \snk-n\}$, $c = \snk-n$, and $h = |C| - 2e = 2(n-e) - \snk + 1$. If 
    \begin{enumerate}
        \item [(a)] $f>h$, and
        \item [(b)] $\sum_{i=c-d(f-h)}^{c-1}i < (f-h)\snk$,
    \end{enumerate}
    then any completion of $I$ to an ESPP instance is not solvable.
\end{prop}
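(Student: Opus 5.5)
Suppose, for contradiction, that some completion $P=[2^e,d^f,\ldots]$ of $I$ is solvable. Fix an equal-sum partition of $[n]$ in which the $e$ parts of size $2$ are $A_1,\dots,A_e$ and the $f$ parts of size $d$ are $B_1,\dots,B_f$. Each part has sum $\snk$. I will count how many elements of $C$ the pairs must use, and then show that the $d$-parts cannot reach the target sum.

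First I handle the pairs. If $\{x,y\}$ is a pair with $x+y=\snk$ and $x,y\le n$, then $x=\snk-y\ge \snk-n=c$, and likewise $y\ge c$. So both elements of every pair lie in $C$. The pairs therefore use exactly $2e$ elements of $C$, and only $|C|-2e=h$ elements of $C$ remain for all other parts. The identity $h=2(n-e)-\snk+1$ follows from $|C|=n-c+1$.

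Next I treat the $d$-parts. Since $f>h$ by (a), at most $h$ of the sets $B_i$ can contain an element of $C$. Hence at least $f-h$ of them lie entirely inside $[c-1]$. These $f-h$ disjoint $d$-subsets of $[c-1]$ together contain $d(f-h)$ distinct elements. Their total sum is therefore at most the sum of the $d(f-h)$ largest elements of $[c-1]$, namely $\sum_{i=c-d(f-h)}^{c-1} i$. By (b) this is less than $(f-h)\snk$. On the other hand, each of these sets must have sum exactly $\snk$, so their total sum is $(f-h)\snk$. This is a contradiction.

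The main point needing care is the bound in the last step when $c-d(f-h)\le 0$. In that case $[c-1]$ has fewer than $d(f-h)$ elements, so the disjoint sets cannot fit inside it at all. The contradiction then holds even more directly, provided the sum in (b) is read with nonpositive indices contributing nothing or negatively. I would note this case explicitly. Nothing else depends on the completion beyond the prefix $[2^e,d^f]$, so the argument applies to every completion. In particular, the slack condition on the completion is never used; it only guarantees, via Proposition~\ref{lem:incomplete_partition_completion}, that completions exist, so the statement is not vacuous.
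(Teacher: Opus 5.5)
Your proof is correct and is essentially the argument the paper uses: it is the counting in the proof of Theorem~\ref{thm:criterion3}, Case (I), specialized to $u=h$, $m=c-1$, $i=f-h$. The pairs consume $2e$ elements of $C$, so at least $f-h$ of the $d$-parts lie in $[c-1]$, and their total is at most $\ms(d(f-h),[c-1])$, which contradicts (b). Your separate treatment of the case $c-d(f-h)\le 0$ is a small extra detail that the paper does not spell out.
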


The idea of this criterion is that any attempt to find a partition of $[n]$ with equal sums runs out of `big' numbers, namely elements of the set $C$, so that it is impossible to satisfy the remaining parts because of what we loosely call the ``slack without $C$ condition'' (Condition (b) in Proposition~\ref{prop:criterion1}). This type of argument is illustrated in the next example, which introduces the unsolvable ESPP instance with the smallest values of $n$ and $k$.

\begin{examp}[\cite{ebrahem2024}]\label{ex:39:13}
Any completion of the incomplete ESPP instance $I=(39,13,[2^9, 3^2])$ is not solvable.
\end{examp}

\begin{proof}[Reason]
It can be verified that $I$ is indeed an incomplete ESPP instance and that $s^{39,13}=60$. Using the notation of Proposition~\ref{prop:criterion1}, we have $f=2$, $c=21$ and $h=1$. So, clearly (a) holds. The sum in (b) is $\sum_{i=18}^{20}i=57<60$. By Proposition~\ref{prop:criterion1}, the claim holds.  We now explain why this is true by a direct argument: Let $P=[2^9, 3^2, p,q]$ be a completion of $[2^9, 3^2]$.  Suppose that there is a partition of $[39]$ containing nine parts of size 2 and two parts of size 3 so that all its parts' sums are 60. The parts of size 2 must contain the numbers $21,22,\ldots,39$, excluding 30, in pairs of sum 60. On the other hand, any part of size $3$ must contain at least one number greater than $20$. Since there are two parts of size 3 and only one number greater than $20$ left, such a partition is impossible. 
\end{proof}

The unsolvability of the incomplete ESPP instance in the next example follows from {\bf Criterion 2} in \cite{ebrahem2024}. Rather than quoting the criterion, we give an example-specific explanation for unsolvability.  

\begin{examp}[\cite{ebrahem2024}]\label{ex:80:30}
Any completion of the incomplete ESPP instance $I=(80,30,[2^{25}, 3^2])$ is not solvable.\end{examp}

\begin{proof}[Reason]
    It can be verified that $I$ is indeed an incomplete ESPP instance and that  $s^{80,30}=108$. Let $P=[2^{25}, 3^2, p,q,r]$ be a completion of $[2^{25}, 3^2]$. 
    Suppose that there is an equal sum partition of $[80]$ with parts of sizes corresponding to the elements of $P$. Then, each of the 25 pairs must be assigned two elements from the set $C=\{c,\ldots,80\}$, where $c=28$ and $|C|=53$. Therefore, after assigning the 25 pairs, only three $C$-elements $x,m,y$ are left, where $x < m < y$, and where $m=54$ is the middle element of $C$.
    Since $m + (c-1) + (c-2) = 107 < s^{80,30}=108$, we conclude that the block $3^2$ consumes these 3 elements, and that the subsequent $4$-tuple must be satisfied with elements smaller than $c$.
    However, $\sum_{i=c-4}^{c-1} i = 102 < s^{80,30}$, contradicting the assumption that all parts have the same sum.
\end{proof}

\subsection{A criterion for unsolvability}\label{sec:criterion}
We introduce a new criterion for the unsolvability of an ESPP instance. 
% We call it Criterion 3 (Criterions 1 and 2 appear in \cite{ebrahem2024}).

\begin{notn}
    For a set of numbers $A$ and a positive integer $l\le |A|$, we denote by $\ms(l, A)$ the sum of the $l$ largest elements in $A$.
    For two integers $a<b$ we write $[a,b]$ for the set $\{a,a+1,\ldots,b\}$.
\end{notn}

\begin{thm}\label{thm:criterion3}
    Let $I=(n,k,P=[2^d,\ldots])$ be an ESPP instance. Let $u=2n-\snk+1-2d$, $e = \lceil(u+1)/2\rceil$ and $m=\snk-n-1$.
    Then, $I$ is not solvable if one of the following conditions holds for some $1 \le i \le k-d-u$:
    \begin{enumerate}
        \item[(I)]  
            $\ms(\sum_{i'=1}^i p_{d+u+i'},[m]) < i \snk$
        \item[(II)] 
            $\ms(\sum_{i'=1}^i p_{d+u+i'},[m]) = i \snk$ 
            and $\ms(p_{d+e}-1,[m-\sum_{i'=1}^i p_{d+u+i'}])<\snk/2$.
    \end{enumerate}    
\end{thm}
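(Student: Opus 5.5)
The plan is a counting argument on the ``big'' elements, in the spirit of Examples~\ref{ex:39:13} and~\ref{ex:80:30}. Suppose, for contradiction, that an equal-sum partition exists.

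\textbf{Step 1 (the pairs).} Each of the $d$ parts of size $2$ sums to $\snk$. Since its larger element is at most $n$, both of its elements lie in $C=[m+1,n]$, where $m+1=\snk-n$. We have $|C|=2n-\snk+1$, so after the pairs exactly $u=|C|-2d$ elements of $C$ remain. Moreover, $C$ is symmetric about $\snk/2$ (the map $x\mapsto \snk-x$ preserves $C$), and every pair has the form $\{x,\snk-x\}$. Hence the set $U$ of leftover $C$-elements is itself symmetric about $\snk/2$. It follows that at least $\lceil u/2\rceil$ elements of $U$ are $\le \snk/2$.

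\textbf{Step 2 (condition (I)).} Among the parts $d+1,\dots,k$, at most $u$ contain an element of $U$. So at least $k-d-u$ of them are \emph{$C$-free}, meaning all their elements lie in $[m]$.

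Let $T_1\le\dots\le T_i$ be the sizes of the $i$ smallest $C$-free parts. Deleting at most $u$ entries from the sorted list $p_{d+1}\le\dots\le p_k$ gives $T_{i'}\ge p_{d+u+i'}$ for each $i'$. These $i$ parts are disjoint subsets of $[m]$ and have total sum $i\snk$. Their total size is $\sum_{i'} T_{i'}\ge S:=\sum_{i'=1}^i p_{d+u+i'}$, and $\ms(\cdot,[m])$ is increasing in its first argument. Therefore
\[
i\snk\le \ms\Bigl(\sum_{i'} T_{i'},[m]\Bigr).
\]
Since the left-hand side does not depend on the sizes, this gives $\ms(S,[m])\ge i\snk$, which contradicts (I).

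\textbf{Step 3 (condition (II), the equality case).} Here every inequality in Step 2 must be tight. Because $\ms(\cdot,[m])$ is strictly increasing, the $i$ chosen $C$-free parts have sizes exactly $p_{d+u+1},\dots,p_{d+u+i}$. Their union is exactly the top block $[m-S+1,m]$. The same tightness should force exactly $u$ of the parts $d+1,\dots,k$ to meet $U$. If fewer did, then more parts would be $C$-free and the shifted comparison $T_{i'}\ge p_{d+u+i'}$ would have slack; I would exploit this by re-running Step 2 with the index shifted.

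Exactly $u$ parts sharing the $u$ elements of $U$ means each of them contains exactly one element of $U$. Next I choose a suitable one of these parts. By Step 1, at least $\lceil u/2\rceil$ of them carry an element $x\le\snk/2$. The target is to find one such part whose size $p$ satisfies $p\ge p_{d+e}$; the choice $e=\lceil (u+1)/2\rceil$ is designed for this pigeonhole. Its remaining $p-1$ elements lie in $[m-S]$, since the top block is already used. Its sum is therefore at most
\[
\frac{\snk}{2}+\ms(p-1,[m-S]),
\]
and monotonicity of $\ms$ in the count bounds this by
\[
\frac{\snk}{2}+\ms(p_{d+e}-1,[m-S])<\snk,
\]
which is the desired contradiction.

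I expect Step 3 to be the main obstacle. It needs (a) a clean proof that equality forces exactly one $U$-element per part, and (b) the pigeonhole pairing of the small elements of $U$ with parts of size at least $p_{d+e}$. For (b), I would first try sorting the $u$ $U$-containing parts by size and pairing the smallest $e-1$ of them against the elements above $\snk/2$. If the off-by-one in $e$ does not match, I would instead pair against elements $\ge \snk/2$ and absorb the middle element $\snk/2$ when $u$ is odd.
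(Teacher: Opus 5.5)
Your outline matches the paper's: the pairs exhaust $C\setminus U$, $U$ is symmetric about $\snk/2$, the $U$-free parts live in $[m]$, case (II) is handled by tightness, and the pigeonhole runs through $e$. But the key inequality is reversed, and this breaks both steps. Deleting at most $u$ entries from $p_{d+1}\le\cdots\le p_k$ gives $p_{d+i'}\le T_{i'}\le p_{d+u+i'}$, because the $i'$-th surviving index is at most $d+u+i'$. It does not give $T_{i'}\ge p_{d+u+i'}$. Moreover, from $i\snk\le\ms(\sum_{i'}T_{i'},[m])$ and $\sum_{i'}T_{i'}\ge S$ nothing follows about $\ms(S,[m])$. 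The correct direction $\sum_{i'}T_{i'}\le S$ is exactly what makes Step 2 work: $i\snk\le\ms(\sum_{i'}T_{i'},[m])\le\ms(S,[m])$, contradicting (I). The same reversal recurs in Step 3. You need a $U$-containing part with $U$-element $x\le\snk/2$ and size $p\le p_{d+e}$, not $p\ge p_{d+e}$. With $p\ge p_{d+e}$, monotonicity bounds $\ms(p-1,[m-S])$ from below and gives no contradiction.

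Step 3 also lacks the idea that settles (a) and supplies the part sizes needed for (b). Your shifted-index plan fails when $p_{d+u}=p_{d+u+1}=\cdots=p_{d+u+i}$, since then the shifted comparison has no slack. The paper instead restricts to the window $[d+1,d+u+i]$ and uses disjointness. Any $i$ of the $U$-free parts in the window have total size at most $S$ and total sum $i\snk=\ms(S,[m])$. By strict monotonicity of $\ms(\cdot,[m])$, their union is exactly the top block $[m-S+1,m]$. If there were $i+1$ such parts, two different $i$-subsets would have the same union, which is impossible for disjoint nonempty parts. Hence exactly $i$ parts of the window are $U$-free, and tightness forces their sizes to be $p_{d+u+1},\dots,p_{d+u+i}$. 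The remaining $u$ parts of the window each contain exactly one element of $U$ and have sizes $p_{d+1},\dots,p_{d+u}$. The $e$ smallest of these have size at most $p_{d+e}$. Only $\lfloor u/2\rfloor<e$ elements of $U$ exceed $\snk/2$, so one of these $e$ parts carries $x\le\snk/2$. Its other elements lie in $[m-S]$, so its sum is at most $\snk/2+\ms(p_{d+e}-1,[m-S])<\snk$, a contradiction.
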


\begin{proof}
    Assume for contradiction that (I) or (II) holds for some $1 \le i \le k-d-u$, and there is a partition $\PP$ of $[n]$ such that the sum in each part is $\snk$. 
    Assume $\PP=[A_1,A_2,\ldots,A_k]$, where each $A_i$ has size $p_i$.
    Let $C=[\snk-n,n]$ and let $C'$ be the subset of $C$ consisting of those elements that do not appear in the pairs. Since the $d$ pairs contain only elements of $C$, we have $|C'|=u$.
    Let $J$ be the set of indices in $[d+1, d+u+i]$ such that $A_j\cap C'=\emptyset$. 
    Since $|\{j\in [d+1,d+u+i]:A_j\cap C'\ne \emptyset\}|\le |C'|=u$, we have $|J|\ge (u+i)-u=i$. Let $J_1\subseteq J$ be any subset of size $i$.
    Denoting by $S(A)$ the sum of the elements in the set $A$, we have 
    \begin{equation}\label{eq:sum_of_isnk}
    i\snk=\sum_{j\in J_1}S(A_j)\le\ms(\sum_{j\in J_1}p_j,[m])\le \ms(\sum_{i'=1}^i p_{d+u+i'},[m]) \le i \snk     
    \end{equation}
    The first inequality holds since the $A_j$, $j\in J_1$, consist only of elements of $[m]$, but not necessarily the biggest ones. The second inequality follows from the fact that the $p_t$ are non-descending. The third inequality combines (I) and (II). 
    If (I) holds, the third inequality in \eqref{eq:sum_of_isnk} is strict, and we have a contradiction.
    If (II) holds, by \eqref{eq:sum_of_isnk}, we have that all subsets of size $i$ of $J$ are the same, so $|J|=i$, and we may assume that $J=[d+u+1,d+u+i]$. So, the sets $A_{d+u+1},\ldots, A_{d+u+i}$ contain the $\sum_{i'=1}^i p_{d+u+i'}$ largest elements of $[m]$ and each of the sets $A_{d+1},\ldots,A_{d+u}$ contains exactly one element of $C'$, and all the rest of the elements in these sets are from $[m-\sum_{i'=1}^i p_{d+u+i'}]$.
    
    Since the set $C'$ consists of pairs of elements $x,\snk-x$, where $x<\snk/2$, and possibly $\snk/2$ itself (if $\snk$ is even), at least half the elements in $C'$ are less than or equal to $\snk/2$, and thus, at least one of the sets $A_{d+1},\ldots, A_{d+e}$ contains, as its $C'$ element, a number less than $\snk/2$. Suppose it is $A_{u+l}$, $l\le e$. Let $s_l$ be the sum of $A_{u+l}$'s non-$C'$ elements. Then, $s_l>\snk/2$. On the other hand, $s_l\le \ms(p_{d+l}-1,[m-\sum_{i'=1}^i p_{d+u+i'}])\le \ms(p_{d+e}-1,[m-\sum_{i'=1}^i p_{d+u+i'}])<\snk/2$, by (II) - a contradiction.
\end{proof}

Case (I) of this criterion is an extension of Criterion 1 in \cite{ebrahem2024} (Proposition~\ref{prop:criterion1} here) and is in fact the version that was used for the search in \cite{ebrahem2024}, although it is not stated there in this form. 
This criterion also applies to incomplete ESPP instances, as long as the index $d+u+i$ falls within the corresponding incomplete composition. 

A computerized exhaustive search with $n$ up to 500 found 17,050 incomplete ESPP instances that satisfy the criterion in Theorem~\ref{thm:criterion3}. Of these 17,050 instances, only 7 belong to Case (II). The two instances of Case (II) with the lowest value of $n$ are $(208,76, [2^{64}, 3^2, 4^4])$ and $(299,115, [2^{103}, 4^2, 5^6])$. 
In both instances, the value of $i$ is such that the index $d+u+i$ reaches exactly the end of the incomplete composition.

\subsection{Infinite families of non-solvable ESPP instances}\label{subsec:families}
 
In this subsection, we use Proposition~\ref{prop:criterion1} to show that for each $2<a<\frac{24}{7}$ there are infinitely many unsolvable ESPP instances with $n/k = a$. These instances have the form $(ak,k,[2^{uk}, d^{vk},\ldots])$, where $uk,vk$ and $d\ge 3$ are positive integers, and $0 < u,v < 1$.  

We restate Theorem~\ref{thm:inifinite_family}:

\medskip

\noindent\textbf{Theorem \ref{thm:inifinite_family}.}
\emph{For any rational number $2<a<\frac{24}{7}$, there is an infinite family of ESPP instances $(n,k,P_k)$, where $a=n/k$, that are not solvable.}

\medskip

Before delving into the question of realizing such a family, the following observation gives simple upper and lower bounds on $a$:

\begin{obs}\label{lem:infinite_family_a_range}
Any ESPP instance of the form $(ak,k,P)$, where $P=[2,p_2,\ldots,p_k]$ must satisfy $2 \leq a < 4$.
\end{obs}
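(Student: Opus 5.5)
Both bounds follow from the slack condition and elementary counting, using $n=ak$ and $\snk=\frac{n(n+1)}{2k}=\frac{a(n+1)}{2}$.

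\emph{Lower bound.} Since $P$ is non-descending and $p_1=2$, every part satisfies $p_i\ge 2$. Hence $n=\sum_{i=1}^k p_i\ge 2k$, that is, $a\ge 2$. Alternatively, one can argue that no part can have size $1$, since a singleton would have sum at most $n<\snk$ when $a\ge 2$. The direct counting argument suffices, however, and only uses that $P$ is a composition whose smallest part is $2$.

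\emph{Upper bound.} Here I use $\slack_1(P)\ge 0$, which holds because $(ak,k,P)$ is an ESPP instance and so satisfies the slack condition (Definition~\ref{def:slack_condition}). With $p_1=2$ we get $P_1=2$, so
\[
\slack_1(P)=n+(n-1)-\snk=2n-1-\frac{a(n+1)}{2}\ge 0 .
\]
Rearranging gives $a(n+1)\le 4n-2$, hence
\[
a\le \frac{4n-2}{n+1}=4-\frac{6}{n+1}<4 .
\]
The only subtlety is the index range: $\slack_1$ appears in the minimum defining $\slack(P)$ only if $k\ge 2$. If $k=1$, then $P=[2]$ and $a=n=2$, which lies in the range directly. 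If $k\ge 2$, the inequality above applies.

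I do not expect a real obstacle here. The key step is recognizing that the largest pair, $\{n,n-1\}$, already has to reach the target sum $\snk$, and this is exactly $\slack_1\ge0$.
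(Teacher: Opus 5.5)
Your proposal is correct and follows the paper's proof: the lower bound comes from $p_i\ge 2$ for all $i$, and the upper bound from $\slack_1(P)=2n-1-\snk\ge 0$. The paper bounds $\snk>n^2/(2k)=an/2$ instead of solving for $a$ exactly, but that is only a cosmetic difference; your separate handling of the degenerate case $k=1$, where $\slack_1$ is not part of the slack condition, is a small point of care the paper omits.
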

\begin{proof}
To prove the lower bound on $a$, observe that since $P$ is non-decreasing we have $n = ak = \sum_{i=1}^k p_i \geq 2k$.
The upper bound on $a$ follows from the fact that $\slack_1(P) = 2n-1 - s^{n,k} \geq 0$ must hold. 
Therefore, we have $2n > n(n+1)/(2k) > n^2/(2k) = an/2$, implying the required bound.
\end{proof}

\begin{proof}[Proof of Theorem~\ref{thm:inifinite_family}]
Given $a$ in the specified range, we show that there are rational numbers $u,v$ and an integer $d \geq 3$ for which there are infinitely many integers $k$ such that the incomplete ESPP instance $(ak,k,Q_k=[2^{uk}, d^{vk}])$  can be completed to a non-solvable ESPP instance $(ak,k,P)$. By Observation \ref{lem:incomplete_partition_completion} and Proposition \ref{prop:criterion1}, this will be established if the following properties hold: 
\begin{enumerate}
    \item $n=ak$, $\snk=a(ak+1)/2$, $uk$ and $vk$ are integers,\label{integrality_property}
    \item $u,v > 0$ and $u+v<1$,\label{uv_property}
    \item The triple $(ak,k,Q_k=[2^{uk}, d^{vk}])$ is an incomplete ESPP inatance, and\label{extend_property}
    \item The conditions of Proposition \ref{prop:criterion1} hold, with $n=ak$, $e=uk$, and $f=vk$.\label{nonequitability_property}
\end{enumerate} 

From now on, we refer to these as ``Property (1)'',  ``Property (2)'', etc.

The proof proceeds as follows: Lemma~\ref{lem:inifinite_family_extend} defines conditions for properties \eqref{uv_property} and \eqref{extend_property} to hold. Lemma~\ref{lem:inifinite_family_cslack} defines conditions for Property \eqref{nonequitability_property} to hold. 
Lemma~\ref{lem:solution_exists} shows that given $2<a<\frac{24}{7}$ there exist an integer $d\ge3$ and real numbers $u$ and $v$ satisfying the conditions of Lemma~\ref{lem:inifinite_family_extend} and Lemma~\ref{lem:inifinite_family_cslack}. Finally, Lemma~\ref{lem:inifinite_family_inequalities} shows that once we find such values of $d$, $u$, and $v$, then $u$, and $v$ can be chosen to be rational and there are infinitely many values of $k$ such that all four properties $(1)-(4)$ hold. This completes the proof of Theorem~\ref{thm:inifinite_family}.
\end{proof}

In the following lemmas, we derive a set of inequalities that provide sufficient conditions for our requirements. By taking the limit as $k\rightarrow\infty$, we decouple the parameters $(a,u,v,d)$ from the growth of $k$. To guarantee that a solution in this limit space implies feasibility for a finite, sufficiently large $k$, we restrict our search to the interior of the parameter space. That is, we satisfy the limiting conditions via strict inequalities, ensuring that the results are robust to the $o(1)$ corrections present in the pre-limit system.

\begin{lem}\label{lem:inifinite_family_extend}
Given an integer $d \geq 3$, real numbers $2 < a < 4$, $u$ and $v$, and an integer $k$ such that Property \eqref{integrality_property} holds, conditions (\ref{eq:inifinite_family_extend1}) through (\ref{eq:inifinite_family_slack3}) guarantee that properties \eqref{uv_property} and \eqref{extend_property} hold.
\begin{eqnarray}
        v &>& 0                                                                                  \label{eq:inifinite_family_extend1} \\
        v &<& 1-u                                                                                \label{eq:inifinite_family_extend2} \\       
        u &>& \max\left(0,\frac{d-a}{d-2}\right)                                                 \label{eq:inifinite_family_extend3} \\
        u &<& a - \frac{a^2}{4}                                                                  \label{eq:inifinite_family_slack1} \\
        v &<& \frac{2ad - a^2 - 4du + a \sqrt{(2d - a)^2 - 4d(d-2)u}}{2d^2}                      \label{eq:inifinite_family_slack2} \\
        v &>& \frac{2ad - a^2 - 4du - a \sqrt{(2d - a)^2 - 4d(d-2)u}}{2d^2}                      \label{eq:inifinite_family_slack3}
\end{eqnarray}

\end{lem}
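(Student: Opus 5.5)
The plan is to verify Properties~\eqref{uv_property} and~\eqref{extend_property} directly. I write $n=ak$, $e=uk$, $f=vk$ and $Q_k=[2^{uk},d^{vk}]$, so the incomplete composition has $l=(u+v)k$ parts. For $k$ of the size considered in Property~\eqref{integrality_property}, $uk$ and $vk$ are positive integers.

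\textbf{Property~\eqref{uv_property}.} This is immediate: \eqref{eq:inifinite_family_extend1}, \eqref{eq:inifinite_family_extend2} and \eqref{eq:inifinite_family_extend3} give $u,v>0$ and $u+v<1$. The latter also gives $l<k$.

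\textbf{Property~\eqref{extend_property}, composition inequality.} I check the two requirements of Definition~\ref{def:inc_part} in turn. First, the inequality \eqref{def:inc_part_cond} reads
\[
ak-2uk-dvk\ \ge\ (k-uk-vk)\,d .
\]
The $v$-terms cancel, and after dividing by $k$ it becomes $a-2u\ge d-du$, that is, $u(d-2)\ge d-a$. This follows from \eqref{eq:inifinite_family_extend3}, with no $k$-dependence at all.

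\textbf{Property~\eqref{extend_property}, slack.} Next I need $\slack(Q_k)\ge0$. By Lemma~\ref{lem:slack_at_end_of_block}, it suffices to check $j=uk$ (end of the $2$-block) and $j=(u+v)k$ (end of the $d$-block). Let $T=P_j$. The sum of the $T$ largest elements of $[n]$ is $Tn-T(T-1)/2$, and I subtract $j\snk=j\,a(ak+1)/2$.
\begin{itemize}
\item For $j=uk$, $T=2uk$, this gives exactly
\[
\slack_{uk}=uk\bigl[k(2a-2u-a^2/2)+1-a/2\bigr].
\]
The bracket's leading coefficient is positive precisely when $u<a-a^2/4$, which is \eqref{eq:inifinite_family_slack1}.
\item For $j=(u+v)k$, put $M=2u+dv$, so $T=Mk$. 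Then
\[
\slack_j=k^2\Bigl(aM-\tfrac{M^2}{2}-\tfrac{(u+v)a^2}{2}\Bigr)+O(k).
\]
The leading coefficient is a concave quadratic in $v$:
\[
-\tfrac{d^2}{2}v^2+\bigl(ad-2du-\tfrac{a^2}{2}\bigr)v+u\bigl(2a-2u-\tfrac{a^2}{2}\bigr).
\]
Its roots, by the quadratic formula, are the two expressions in \eqref{eq:inifinite_family_slack2} and \eqref{eq:inifinite_family_slack3}. The routine but necessary algebra is that the discriminant simplifies to $a^2\bigl((2d-a)^2-4d(d-2)u\bigr)$. So the quadratic is positive exactly for $v$ strictly between the roots.
\end{itemize}
I also note the constant term $u(2a-2u-a^2/2)$ is positive under \eqref{eq:inifinite_family_slack1}. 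Hence the quadratic is positive at $v=0$, the discriminant is positive, and the roots are real, with the lower one negative.

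\textbf{Main obstacle: lower-order terms.} The $O(k)$ corrections in both slacks have no definite sign: $1-a/2<0$ in the first, and $(M-(u+v)a)k/2$ in the second. Thus the strict limiting inequalities yield $\slack\ge0$ only once $k$ is large enough that $k$ times the positive leading coefficient dominates these bounded corrections. I would make this explicit as ``for all sufficiently large $k$ satisfying Property~\eqref{integrality_property}'', in line with the paper's remark that strict inequalities absorb $o(1)$ corrections. The quantifier ``for such $k$'' is then supplied in Lemma~\ref{lem:inifinite_family_inequalities}, where infinitely many admissible $k$ are produced anyway.
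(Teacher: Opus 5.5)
Your proposal is correct and follows essentially the same route as the paper. You obtain Property (2) directly, reduce the composition inequality to $u(d-2)\ge d-a$, and check the slack only at $j=uk$ and $j=(u+v)k$ via Lemma~\ref{lem:slack_at_end_of_block}; the second check gives the same quadratic in $v$, whose roots are the bounds \eqref{eq:inifinite_family_slack2}--\eqref{eq:inifinite_family_slack3}.

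Your version differs from the paper's in two small ways. Your handling of the $O(k)$ terms is more explicit than the paper's ``taking the limit'', and your real-roots argument uses the positive constant term $u(2a-2u-a^2/2)$ instead of the paper's bound $(2d-a(d-1))^2\ge 0$. That argument also shows, for any $d$, that the lower root is negative, which is the content of Lemma~\ref{lem:infinite_family_redundant_slack3}.
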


\begin{proof}
Clearly, Property~\eqref{uv_property} is guaranteed by \eqref{eq:inifinite_family_extend1}-\eqref{eq:inifinite_family_extend3}.
Since $l=(u+v)k$, we have $l<k$ by \eqref{eq:inifinite_family_extend2}. So, to show that Property~\eqref{extend_property} holds, it remains to show that \eqref{def:inc_part_cond} holds and that $\slack(Q_k)\ge0$. Now, \eqref{def:inc_part_cond} is equivalent to $p_l \leq (n-\sum_{i=1}^l p_i)/(k-l)$, and since $d=p_l$ and $n=ak$, this is equivalent to $d\le \frac{a-2u-dv}{1-u-v}$. The latter is equivalent to $u \ge \frac{d-a}{d-2}$, which holds by \eqref{eq:inifinite_family_extend3}.

To show that $\slack(Q_k) \geq 0$, it suffices to check that $\slack_j(Q_k) \geq 0$ for $j=uk$ and $j=(u+v)k$, by Lemma~\ref{lem:slack_at_end_of_block}. 
The condition for $j=uk$ is:
$$
0 \leq \slack_{uk}(Q_k) = \sum_{i=1}^{2 u k} (n-i+1) - u k \, s^{n,k} = u k (2 n - 2 u k + 1) - u k \frac{n(n+1)}{2k}.
$$
Substituting $n=ak$, dividing by $2uk^2$, and taking the limit, we obtain that the last inequality follows from (\ref{eq:inifinite_family_slack1}).
The condition for $j=(u+v)k$ is:
\begin{eqnarray*}
0  &\leq& \slack_{(u+v)k}(Q_k) = \sum_{i=1}^{(2 u + d v) k} (n-i+1) - (u+v) k s^{n,k} \\
   &=&    (2 u + d v) k \frac{2 n - 2 u k - d v k + 1}{2} - (u+v) k \frac{n(n+1)}{2k},\\
0  &<&    (2 u + d v) (2 a - 2 u - d v) - (u+v) a^2,
\end{eqnarray*}
and we obtain a quadratic inequality for $v$:
\[d^2v^2+(a^2+4du-2ad)v+u(4u+a^2-4a)<0.\]
Solving for $v$ yields that the condition is equivalent to $v_- < v < v_+$ where:
\begin{eqnarray*}
v_\pm &=& \frac{2ad - a^2 - 4du \pm a \sqrt{(2d - a)^2 - 4d(d - 2)u}}{2d^2}.
\end{eqnarray*}
The discriminant is always non-negative by the following, where the first inequality uses  (\ref{eq:inifinite_family_slack1}) to bound $u$ from above:
\begin{eqnarray*}
(2d - a)^2 - 4d(d - 2)u 
&>& (2d - a)^2 - 4d(d - 2)(a-a^2/4) \\
%&=& (2d - a)^2 - ad(d - 2)(4-a) 
&=& (2d - a(d-1))^2 \geq 0.
\end{eqnarray*}
Therefore $v_\pm$ are real solutions, implying that conditions (\ref{eq:inifinite_family_slack2}) 
and (\ref{eq:inifinite_family_slack3}) are sufficient
to guarantee that $\slack_{(u+v)k}(Q_k)$ is non-negative.
\end{proof}

\begin{lem}\label{lem:inifinite_family_cslack}
Given an integer $d \geq 3$, real numbers $2 < a < 4$, $u$ and $v$, and an integer $k$ such that Property ~\eqref{integrality_property} holds, the condition
\begin{equation}
    v > 2a - \frac{a^2}{2} + \max\left(0,\frac{(d - 1)a^2}{d^2} - \frac{2a}{d}\right) - 2u \label{eq:inifinite_family_cslack}
\end{equation}
guarantees that Property~\eqref{nonequitability_property} holds.
\end{lem}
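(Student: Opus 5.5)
The plan is to verify conditions (a) and (b) of Proposition~\ref{prop:criterion1} directly, substituting $n=ak$, $e=uk$, $f=vk$ and $\snk=a(ak+1)/2$, which are all integers by Property~\eqref{integrality_property}. Both conditions will become linear in $k$. The leading coefficients will be exactly the quantities appearing in \eqref{eq:inifinite_family_cslack}, and the constant terms will be dominated once $k$ is large. Throughout, write
\[
w = v - 2a + \frac{a^2}{2} + 2u ,
\]
so that \eqref{eq:inifinite_family_cslack} says $w>0$ and $w > \frac{(d-1)a^2}{d^2}-\frac{2a}{d}$.

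\emph{Condition (a).} Compute
\[
h = 2(n-e)-\snk+1 = k\Bigl(2a-2u-\frac{a^2}{2}\Bigr) - \frac a2 + 1 .
\]
This gives $t:=f-h = kw + \frac a2 - 1$. Since $a>2$, the constant $\frac a2-1$ is positive. The part of \eqref{eq:inifinite_family_cslack} coming from the $0$ in the maximum gives $w>0$, so $t>0$ holds exactly for every admissible $k$, with no limit needed.

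\emph{Condition (b).} Here $c=\snk-n = \frac{a(ak+1)}{2}-ak$, and the sum in (b) runs over the $dt$ consecutive integers ending at $c-1$. It therefore equals $\frac{dt(2c-dt-1)}{2}$, so (b) reads $dt(2c-dt-1)<2t\snk$. Dividing by $t>0$, this is equivalent to
\[
dt > 2c-1-\frac{2\snk}{d}.
\]
Substituting, the right-hand side equals
\[
k\Bigl(a^2-2a-\frac{a^2}{d}\Bigr) + a - 1 - \frac ad ,
\]
and the left-hand side is $dkw + d\bigl(\frac a2-1\bigr)$. Dividing by $dk$, the inequality becomes
\[
w > \frac{(d-1)a^2}{d^2}-\frac{2a}{d} + O(1/k).
\]
The leading term is exactly the second branch of the maximum in \eqref{eq:inifinite_family_cslack}. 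Because that inequality is strict, (b) holds for all sufficiently large $k$. This matches the paper's convention of working in the interior of the parameter space and absorbing $o(1)$ corrections. One could also compare the constants $d(\frac a2 - 1)$ and $a-1-\frac ad$ directly, but the large-$k$ argument suffices.

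The main point needing care is the range of the sum in (b). If $dt \ge c$, the indices $c-dt,\dots,c-1$ include non-positive integers. I would handle this by noting that the argument behind Proposition~\ref{prop:criterion1} only uses that $f-h$ parts of size $d$ must be filled from $[c-1]$. In that case the maximum available sum $\ms(d(f-h),[c-1])$ is at most the stated sum, so (b) is only easier to satisfy. Alternatively, one simply records that in the regime of interest $c-dt$ remains positive. With both conditions verified, Property~\eqref{nonequitability_property} follows.
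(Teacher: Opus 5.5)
Your proof is correct and is essentially the paper's argument: direct substitution of $n=ak$, $e=uk$, $f=vk$ into conditions (a) and (b) of Proposition~\ref{prop:criterion1}, with (b) reducing to $w>\frac{(d-1)a^2}{d^2}-\frac{2a}{d}+O(1/k)$ for large $k$. Your treatment of (a), which holds exactly for every $k$ because $\frac a2-1>0$, slightly sharpens the paper's limit argument.

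Your closing remark about the range of the sum is unnecessary. The closed form $\frac{dt(2c-dt-1)}{2}$ is valid for any integer range, and Property~\eqref{nonequitability_property} only asks for the inequalities themselves.

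The first justification in that remark is also reversed. When $dt\ge c$, the extra terms $c-dt,\dots,0$ are non-positive, so $\sum_{i=1}^{c-1}i$ is at least the stated sum, not at most. Moreover, $\ms(dt,[c-1])$ is undefined in that case. The correct reason the criterion still applies is that $[c-1]$ then has fewer than $dt$ elements, so the $f-h$ parts of size $d$ cannot be filled at all.
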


\begin{proof}

We first show that Condition (a) in Proposition~\ref{prop:criterion1} holds, that is, $vk > |C|-2uk$. 
The latter is equivalent to
\begin{equation}\label{ineq:condition_a}
    2 u k + v k > |C| = n - (s^{n,k}-n) + 1 = 2 a k - \frac{a(a k+1)}{2} + 1.
\end{equation}
Dividing by $k$ and taking the limit shows that \eqref{ineq:condition_a} is guaranteed by $v > 2a - a^2/2 - 2u$, which is implied by (\ref{eq:inifinite_family_cslack}).

Now we show that Condition (b) in Proposition~\ref{prop:criterion1} holds. 
Let $l=f-h$, which we now know to be positive. So,
\begin{eqnarray}\label{eq:lb_for_l}
    l = f-h = vk - (|C| - 2uk) = vk + 2uk -(2ak - \frac{a(a k+1)}{2} + 1).
\end{eqnarray}
To show that Condition (b) holds, we need to show $\sum_{i=c-dl}^{c-1}i < l\snk$. 
Changing the summation variable to $j=c-i$ and replacing $c$ by $\snk-n$, we need to show

\begin{eqnarray*}
0 &>& \sum_{j=1}^{d l} (s^{n,k}-n-j) - l s^{n,k} \\
   &=& dl\left(s^{n,k}-n-\frac{dl+1}{2}\right)-ls^{n,k} \\
   &=& d l \left(\frac{a(a k + 1)}{2} - a k - \frac{d l + 1}{2}\right) - l \frac{a(a k+1)}{2}.
\end{eqnarray*}
Dividing by $k l$, applying \eqref{eq:lb_for_l}, and ignoring negligible terms since $k$ is large, yields that Condition (b) is guaranteed by:
\begin{eqnarray*}
0 &>& d \left(\frac{a^2}{2} - a - \frac{d}{2}\left(v + 2u - 2a + \frac{a^2}{2}\right)\right) - \frac{a^2}{2}.
\end{eqnarray*}
Isolating $v$ yields
\[
    v > 2a - \frac{a^2}{2} + \frac{(d - 1)a^2}{d^2} - \frac{2a}{d} - 2u,
\]
which is implied by \eqref{eq:inifinite_family_cslack} as well, completing the proof.
\end{proof}

\begin{lem}\label{lem:solution_exists}
    Given a rational number $2<a<\frac{24}{7}$, there exist an integer $d \geq 3$ and real numbers $u$ and $v$ that satisfy the inequalities  \eqref{eq:inifinite_family_extend1}-\eqref{eq:inifinite_family_slack3} and \eqref{eq:inifinite_family_cslack}.
\end{lem}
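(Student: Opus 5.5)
The plan is to work near the boundary point $u^\ast=a-\frac{a^2}{4}$ of \eqref{eq:inifinite_family_slack1}, where the constraints collapse nicely, and then perturb $u$ slightly downward into the interior. Write $L(u)=2a-\frac{a^2}{2}+M_d-2u$ for the right-hand side of \eqref{eq:inifinite_family_cslack}, where $M_d=\max\left(0,\frac{(d-1)a^2}{d^2}-\frac{2a}{d}\right)$. Write $v_\pm(u)$ for the two bounds in \eqref{eq:inifinite_family_slack2}--\eqref{eq:inifinite_family_slack3}. We need some $u$ with
\[
\max\bigl(0,L(u),v_-(u)\bigr)<\min\bigl(1-u,v_+(u)\bigr).
\]
We also need $\max\left(0,\frac{d-a}{d-2}\right)<u<u^\ast$. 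Then any $v$ in the resulting open interval works.

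\textbf{Step 1 (the endpoint).} At $u=u^\ast$ the discriminant becomes $D(u^\ast)=(a(d-1)-2d)^2$, as already computed in the proof of Lemma~\ref{lem:inifinite_family_extend}. A direct substitution should give $v_\pm(u^\ast)=\frac{a\left((a(d-1)-2d)\pm|a(d-1)-2d|\right)}{2d^2}$. Hence $v_+(u^\ast)=M_d$ and $v_-(u^\ast)\le 0$ in both sign cases. On the other side, $L(u^\ast)=M_d$ exactly. So at the endpoint the critical constraint $L<v_+$ holds with equality, while the other constraints should hold strictly. Specifically, I will check that $1-u^\ast=(1-a/2)^2>M_d$ and that $u^\ast>\frac{d-a}{d-2}$ for the $d$ chosen below. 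Both are strict, so they survive a small perturbation.

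\textbf{Step 2 (first-order perturbation).} Decrease $u$ to $u^\ast-\varepsilon$. Then $L$ increases by exactly $2\varepsilon$. Differentiating $v_+$ shows that it increases by
\[
\varepsilon\cdot\frac{1}{2d^2}\left(4d+\frac{2ad(d-2)}{\sqrt{D(u^\ast)}}\right)+O(\varepsilon^2).
\]
The condition $v_+>L$ for small $\varepsilon>0$ therefore reduces to
\[
a(d-2)>(2d-2)\,|a(d-1)-2d|.
\]
The point $a=\frac{2d}{d-1}$ is where the right-hand side vanishes. Around it this inequality defines an open interval $I_d$ of values of $a$. For $d=3$ it gives $\frac{8}{3}<a<\frac{24}{7}$, which explains the upper bound $\frac{24}{7}$ in the statement. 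The case $a=3$ (where $D(u^\ast)=0$) needs a separate check, but there the derivative of $v_+$ is $+\infty$, so it only helps. We must also keep $v_-<L$ and $v_->0$ issues under control. Since $v_-(u^\ast)\le0<$ the other lower bounds to first order, strictness handles this, except in the case $v_-(u^\ast)=0$. In that case a similar derivative comparison should show $v_-$ decreases or grows slower than $L$.

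\textbf{Step 3 (covering $(2,\frac{24}{7})$).} For general $d\ge3$ I will compute the endpoints of $I_d$. Solving the two linear inequalities gives
\[
I_d=\left(\frac{2d(2d-2)}{(2d-2)(d-1)+(d-2)},\ \frac{2d(2d-2)}{(2d-2)(d-1)-(d-2)}\right).
\]
I will then verify that consecutive intervals overlap, i.e.\ the left end of $I_d$ is below the right end of $I_{d+1}$, and that the left ends tend to $2$. Then every $a\in(2,\frac{24}{7})$ lies in some $I_d$. For that $d$ I will also confirm the side conditions from Step~1: $u^\ast>\frac{d-a}{d-2}$ (equivalently $(d-2)a(4-a)>4(d-a)$) and $(1-a/2)^2>M_d$. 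These hold at $a=\frac{2d}{d-1}$, and I expect them to persist on all of $I_d$; otherwise $I_d$ is shrunk, which still leaves overlap.

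\textbf{Main obstacle.} I expect the main obstacle to be this last bookkeeping. One must show the intervals $I_d$, intersected with the side conditions, genuinely cover $(2,\frac{24}{7})$ with no gaps. The most delicate part is near the lower end of each $I_d$ and the switch in the sign of $a(d-1)-2d$. If the first-order argument is too tight anywhere, I would fall back on exhibiting explicit $(u,v)$ for finitely many subintervals.
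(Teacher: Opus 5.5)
Your proposal is correct and is essentially the paper's argument: you perturb $u$ downward from the corner $\bigl(a-\frac{a^2}{4},M_d\bigr)$, where the bound \eqref{eq:inifinite_family_slack2} meets the line \eqref{eq:inifinite_family_cslack}, and compare slopes, which gives exactly the paper's condition $a<\frac{4d(d-1)}{2d^2-5d+4}$ (your right endpoint of $I_d$). The differences are minor: the paper fixes $d=1+\lceil\frac{2}{a-2}\rceil$, which puts $a$ in the branch $a(d-1)\ge 2d$ of your $I_d$ and so avoids the overlap bookkeeping, and it disposes of \eqref{eq:inifinite_family_slack3} by a global convexity argument rather than your local one; also, the obstacle you anticipate does not arise, because $I_d\subset\bigl(2,\frac{2d}{d-2}\bigr)$ and both side conditions hold on that whole interval.
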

\begin{proof}
 Given the, yet unspecified, integer $d \geq 3$, we would like to find $u \in \left(\max(0,\frac{d-a}{d-2}),a-\frac{a^2}{4}\right)$ (as in \eqref{eq:inifinite_family_extend3} and \eqref{eq:inifinite_family_slack1}) such that there is a value of $v$ satisfying inequalities (\ref{eq:inifinite_family_extend1}), (\ref{eq:inifinite_family_extend2}), (\ref{eq:inifinite_family_slack2}), (\ref{eq:inifinite_family_slack3}) and (\ref{eq:inifinite_family_cslack}).
Our first observation is that the $\max$ in  (\ref{eq:inifinite_family_cslack}) is $(d - 1)a^2/d^2 - 2a/d$ if: 
\begin{eqnarray}
a \geq \frac{2d}{d-1} = 2+\frac{2}{d-1}. \label{eq:inifinite_family_redundant_cslack}
\end{eqnarray}
In what follows, we choose $d \geq 3$ to be the smallest possible so that (\ref{eq:inifinite_family_redundant_cslack}) holds:
\begin{eqnarray}
d = 1 + \left\lceil\frac{2}{a-2}\right\rceil.   \label{eq:inifinite_family_choosing_d}
\end{eqnarray}
It follows that:
\begin{eqnarray} 
2+\frac{2}{d-1} \leq a < 2+\frac{2}{d-2}.\label{eq:inifinite_family_a_bounds}
\end{eqnarray}

Therefore, by our choice of $d$ from now on we replace (\ref{eq:inifinite_family_cslack}) by:
\begin{eqnarray}
v > 2a - \frac{a^2}{2} + \frac{(d - 1)a^2}{d^2} - \frac{2a}{d} - 2u.   \label{eq:inifinite_family_cslack1}
\end{eqnarray}

One more consequence of the choice of $d$, proven in Lemma~\ref{lem:infinite_family_redundant_slack3} below, is that inequality (\ref{eq:inifinite_family_slack3}) is redundant in the given system.

We denote by $\mathcal S$ the set of solutions to the system, 
where ${\mathcal S} = {\mathcal S}_1 \cap {\mathcal S}_2$:
\begin{enumerate}
    \item 
    The primary region ${\mathcal S}_1$ is the set of solutions to inequalities
    (\ref{eq:inifinite_family_extend1}), (\ref{eq:inifinite_family_slack1}), (\ref{eq:inifinite_family_slack2}) and (\ref{eq:inifinite_family_cslack1}). 
    In Lemma~\ref{lem:inifinite_family_a_range_primary} we prove that the point $(u_1,v_1) = (a-\frac{a^2}{4},\frac{a^2 (d-1) - 2ad}{d^2})$ is on the boundary of ${\mathcal S}_1$.
    \item 
    The secondary region ${\mathcal S}_2$ is the set of solutions to the rest of the inequalities:
    (\ref{eq:inifinite_family_extend2}) and (\ref{eq:inifinite_family_extend3}).
    In Lemma~\ref{lem:inifinite_family_a_range_secondary} we prove that $(u_1,v_1) \in {\mathcal S}_2$.
\end{enumerate}

The scenario is illustrated in Figure~\ref{fig:inifinite_family_satisfiability_region} for two specific values of $a$.
\begin{figure}[h]
  %\begin{center}
  \centering
  \includegraphics[width=1.0\textwidth]{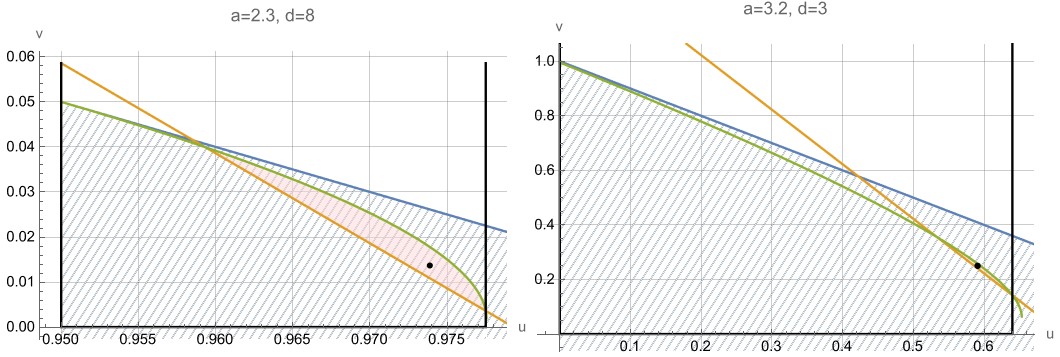}
  %\end{center}
  \caption{
     Satisfiability regions for $a=2.3$ and $a=3.2$. 
     The two vertical black lines and the horizontal black line are respectively (\ref{eq:inifinite_family_extend3}), (\ref{eq:inifinite_family_slack1}) and (\ref{eq:inifinite_family_extend1}). 
     The upper bound (\ref{eq:inifinite_family_slack2}) for $v$  is the green curve, and the lower bound (\ref{eq:inifinite_family_cslack1}) is the orange line. 
     As we show in Lemma~\ref{lem:inifinite_family_a_range_primary}, these two lines intersect at two points $(u_1,v_1)$ and $(u_2,v_2)$ where $u_1$ is on the right vertical line. Thus, the primary region ${\mathcal S}_1$ is the pink region.
     The blue line denotes the upper bound \eqref{eq:inifinite_family_extend2}, and together with the lhs vertical line \eqref{eq:inifinite_family_extend3}, it forms the secondary region ${\mathcal S}_2$, which is colored with hatched lines.
     The black dot is a feasible solution.
     }
  \label{fig:inifinite_family_satisfiability_region}
\end{figure}

Combining these two Lemmas, we conclude that there exists a point $(u,v) $ sufficiently close to $(u_1,v_1)$ so that $(u,v)$ is in the intersection 
$\mathcal S = {\mathcal S}_1 \cap {\mathcal S}_2$, concluding the proof of Lemma~\ref{lem:solution_exists}.
\end{proof}

\begin{lem}\label{lem:infinite_family_redundant_slack3}
    Given a rational number $2<a<\frac{24}{7}$, the integer $d$ given by (\ref{eq:inifinite_family_choosing_d}), and some value of $u$ satisfying $0 < u < a - a^2/4$, 
    the lower bound on $v$ given by (\ref{eq:inifinite_family_slack3}) is non-positive.
    Therefore, inequality (\ref{eq:inifinite_family_slack3}) is made redundant by the $v>0$ constraint (\ref{eq:inifinite_family_extend3}).
\end{lem}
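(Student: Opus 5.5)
The plan is to use Vieta's formulas on the quadratic inequality from the proof of Lemma~\ref{lem:inifinite_family_extend}. Recall that $v_-$ and $v_+$ are the two roots of
\[
Q(v)=d^2v^2+(a^2+4du-2ad)v+u(4u+a^2-4a).
\]
Step one is to note that these roots are real. This was already shown in the proof of Lemma~\ref{lem:inifinite_family_extend}, where the discriminant was bounded below by $(2d-a(d-1))^2\ge 0$ using $u<a-a^2/4$.

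Step two is to compute the product of the roots. By Vieta,
\[
v_+v_-=\frac{u(4u+a^2-4a)}{d^2}.
\]
The hypothesis $0<u$ gives $u>0$. The hypothesis $u<a-\frac{a^2}{4}$ gives $4u+a^2-4a<0$. Hence $v_+v_-<0$.

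Step three concludes from the sign of the product. Since the roots are real with negative product, they have opposite signs, so the smaller root satisfies $v_-<0$. This is in fact strict, which is stronger than the claimed $v_-\le 0$. It follows that the lower bound (\ref{eq:inifinite_family_slack3}) is implied by the constraint $v>0$, i.e. (\ref{eq:inifinite_family_extend1}).

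As a cross-check, or as an alternative route if one prefers to avoid Vieta, I would verify $2ad-a^2-4du\le a\sqrt{(2d-a)^2-4d(d-2)u}$ directly.
\begin{itemize}
\item If the left side is nonpositive, the inequality is immediate.
\item Otherwise, square both sides and write $A=a(2d-a)$. The terms $A^2$ cancel. After dividing by $4du>0$, the inequality reduces to $4du\le 4ad-a^2d$, i.e. $u\le a-a^2/4$, which holds by hypothesis.
\end{itemize}

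There is no real obstacle here. The only point to note is that the specific choice of $d$ in (\ref{eq:inifinite_family_choosing_d}) and the restriction $a<\frac{24}{7}$ are not actually needed. The conclusion uses only $d\ge 1$, $u>0$, and $u<a-a^2/4$, so I would remark that the lemma holds more generally.
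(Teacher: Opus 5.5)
Your proof is correct, and it takes a different route from the paper's.

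\textbf{The paper's route.} The paper writes the right-hand side of (\ref{eq:inifinite_family_slack3}) as a function $h(u)$. It shows $h''(u)>0$, so $h$ is convex on $[0,a-\frac{a^2}{4}]$, and the maximum is attained at an endpoint. It then evaluates $h(0)=0$ and $h(a-\frac{a^2}{4})=0$. In the second evaluation it must take $\sqrt{(a(d-1)-2d)^2}=a(d-1)-2d$. That branch choice is exactly where the choice of $d$ enters, through $a\ge \frac{2d}{d-1}$ in (\ref{eq:inifinite_family_a_bounds}).

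\textbf{Your route.} Your Vieta argument replaces all of this with one observation. The constant term $u(4u+a^2-4a)$ of the quadratic is negative on the open interval $0<u<a-\frac{a^2}{4}$, so $v_-<0<v_+$. Negativity of $d^2\cdot u(4u+a^2-4a)$ already makes the discriminant positive, so your Step one is not even needed.

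\textbf{What each approach buys.}
\begin{itemize}
\item Your argument gives strict negativity directly; the paper's convexity argument only gives $\le 0$ unless strict convexity is invoked.
\item As you note, your argument shows that the particular $d$ from (\ref{eq:inifinite_family_choosing_d}) and the bound $a<\frac{24}{7}$ play no role in this lemma. One only needs $d\neq 0$, and $a>0$ to identify the minus branch as the smaller root.
\item Your direct squaring check is also correct. After cancelling $A^2$ and dividing by $4du>0$, it reduces to $u\le a-\frac{a^2}{4}$.
\item The paper's computation additionally records that $v_-$ vanishes at both endpoints. This matches the boundary behaviour used in Lemma~\ref{lem:inifinite_family_a_range_primary}, but it is not needed for the lemma itself.
\end{itemize}

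You are also right that the $v>0$ constraint is (\ref{eq:inifinite_family_extend1}); the reference to (\ref{eq:inifinite_family_extend3}) in the statement is a typo.
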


\begin{proof}
   Denoting the rhs of inequality (\ref{eq:inifinite_family_slack3}) by $h(u)$ we have:
    \begin{eqnarray*}
    h(u)    &=&    \frac{2ad - a^2 - 4du - a \sqrt{(2d - a)^2 - 4d(d-2)u}}{2d^2}\\ 
    h'(u)   &=&    \frac{-2 + a(d-2) {\Bigl((2d - a)^2 - 4d(d-2)u\Bigr)}^{-1/2}}{d}\\ 
    h''(u)  &=&    2a(d-2)^2 {\Bigl((2d - a)^2 - 4d(d-2)u\Bigr)}^{-3/2}\\
            &\geq& h''(a-a^2/4) = 2a(d-2)^2 {\Bigl((a(d-1)-2d)^2\Bigr)}^{-3/2}\\
            &=&    2a(d-2)^2 {\Bigl(a(d-1)-2d\Bigr)}^{-3}
             >  2a(d-2)^2 {\biggl(\Bigl(2+\frac{2}{d-2}\Bigr)(d-1)-2d\biggr)}^{-3} \\
            &=& 2a(d-2)^2 {\Bigl(-2+2\frac{d-1}{d-2}\Bigr)}^{-3}> 0.
    \end{eqnarray*}

    Consequently, the maximum of $h(u)$ is obtained in one of the endpoints, $u=0$ or $u=a-a^2/4$.
    Therefore, the Lemma is implied by the following computation:
    \begin{eqnarray*}
    h(0) &=& \frac{2ad - a^2 - a (2d - a)}{2d^2} = 0\\
    h(a-a^2/4) 
    %&=& \frac{2ad - a^2 - 4d(a-a^2/4) - a \sqrt{(2d - a)^2 - 4d(d-2)(a-a^2/4)}}{2d^2}\\ 
    &=& \frac{a^2 (d-1) - 2ad  - a \sqrt{(a(d-1)-2d)^2}}{2d^2}\\  
    &=& \frac{a^2 (d-1) - 2ad  - a \Bigl(a(d-1)-2d\Bigr)}{2d^2} = 0\\  
    \end{eqnarray*}
 
\end{proof}

\begin{lem}\label{lem:inifinite_family_a_range_primary}
Let $2<a<\frac{24}{7}$ be a rational number, and assume the integer $d$ is determined by (\ref{eq:inifinite_family_choosing_d}). Let ${\mathcal S}_1$ be the set of solutions to inequalities (\ref{eq:inifinite_family_extend1}), (\ref{eq:inifinite_family_slack1}), (\ref{eq:inifinite_family_slack2}) and (\ref{eq:inifinite_family_cslack1}). 
Then, the point $(u_1,v_1) = (a-\frac{a^2}{4},\frac{a^2 (d-1) - 2ad}{d^2})$ lies on the boundary of ${\mathcal S}_1$.
\end{lem}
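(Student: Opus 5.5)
The approach is to check directly that $(u_1,v_1)$ satisfies each defining inequality of ${\mathcal S}_1$ either with equality or strictly, and then to exhibit points of ${\mathcal S}_1$ arbitrarily close to it. Since (\ref{eq:inifinite_family_slack1}) holds with equality at $u_1=a-a^2/4$, the point is not interior. So it will lie on the boundary once we show that it is a limit of points of ${\mathcal S}_1$.

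\emph{Step 1: the two curves meet at $u_1$.} Write $v_+(u)$ for the right-hand side of (\ref{eq:inifinite_family_slack2}) and $L(u)$ for the right-hand side of (\ref{eq:inifinite_family_cslack1}). As in the proof of Lemma~\ref{lem:inifinite_family_extend}, at $u=u_1$ the radicand equals $(a(d-1)-2d)^2$. By (\ref{eq:inifinite_family_a_bounds}) we have $a(d-1)-2d\ge 0$, so the square root is $a(d-1)-2d$. Substituting gives
\[
v_+(u_1)=\frac{2ad-a^2-4ad+a^2d+a^2(d-1)-2ad}{2d^2}=\frac{a^2(d-1)-2ad}{d^2}=v_1 .
\]
Directly, $L(u_1)=2a-\frac{a^2}{2}+\frac{(d-1)a^2}{d^2}-\frac{2a}{d}-2a+\frac{a^2}{2}=v_1$. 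Thus $(u_1,v_1)$ is an intersection point of the green and orange curves. Moreover $v_1=\frac{a}{d^2}\bigl(a(d-1)-2d\bigr)\ge 0$, with equality only when $a=2d/(d-1)$.

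\emph{Step 2: nearby interior points.} It suffices to show that for $u$ slightly smaller than $u_1$ we have $v_+(u)>L(u)$ and $L(u)>0$. Then every $v\in(L(u),v_+(u))$ gives a point of ${\mathcal S}_1$, and these points converge to $(u_1,v_1)$ as $u\to u_1^-$. Since $L'=-2$, the condition $L(u)>0$ is immediate for $u<u_1$. Because the two curves agree at $u_1$, it remains to compare slopes, and we need $v_+'(u_1)<-2$. Differentiating gives
\[
v_+'(u)=-\frac{2}{d}-\frac{a(d-2)}{d\sqrt{(2d-a)^2-4d(d-2)u}} .
\]
In the degenerate case $a=2d/(d-1)$ the square root vanishes at $u_1$, so $v_+'\to-\infty$ and the conclusion is immediate. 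Otherwise the requirement becomes $a(d-2)>2(d-1)\bigl(a(d-1)-2d\bigr)$, that is,
\[
a\bigl(2(d-1)^2-(d-2)\bigr)<4d(d-1).
\]

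\emph{Step 3: verifying the slope inequality — the main obstacle.} This is where the hypothesis $a<\frac{24}{7}$ enters. For $d=3$ the condition reads $7a<24$, which is exactly the hypothesis. For $d\ge4$ we use $a<2+\frac{2}{d-2}=\frac{2(d-1)}{d-2}$ from (\ref{eq:inifinite_family_a_bounds}). It then suffices that $2(d-1)\bigl(2(d-1)^2-(d-2)\bigr)\le 4d(d-1)(d-2)$, which simplifies to $2(d-1)^2-(d-2)\le 2d(d-2)$, i.e. $d\ge 4$, and this holds. One also has to note that $d=3$ is exactly the range $a\ge 3$ of (\ref{eq:inifinite_family_choosing_d}), so the two cases cover all $a$. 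With Steps 1–3 in place, $(u_1,v_1)$ is a limit of points of ${\mathcal S}_1$ and lies on the line $u=a-a^2/4$, hence on the boundary of ${\mathcal S}_1$.
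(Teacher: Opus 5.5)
Your proposal is correct and follows the paper's proof essentially step for step. You check $f(u_1)=g(u_1)=v_1$ and $v_1\ge 0$, then show $f'(u_1)<-2$, which reduces to $a<4d(d-1)/(2d^2-5d+4)$; this is verified as $7a<24$ for $d=3$ and via $a<2+\frac{2}{d-2}$ for $d\ge 4$. Your separate handling of the degenerate case $a=2d/(d-1)$ is a small but real improvement: there the radicand vanishes at $u_1$, and the paper's closed form for $f'(u_1)$ has a zero denominator.
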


\begin{proof}
Clearly the strict inequality (\ref{eq:inifinite_family_slack1}) does not hold for $u=u_1$ since we have equality instead of the required strict inequality, 
so $(u_1,v_1) \not \in {\mathcal S}_1$.
In order to show that $(u_1,v_1) \in \partial {\mathcal S}_1$, 
we show that 
a) all four constraints are met if we replace $>$ or $<$ by $=$, 
and b) the point $(u_1,v_1)$ has arbitrarily close points $(u,v) \in {\mathcal S}_1$, 

We go over the four inequalities one by one:
\begin{description}
\item[(\ref{eq:inifinite_family_extend1})]    
The claim follows from the fact that by  (\ref{eq:inifinite_family_redundant_cslack}) 
 $v_1 = \frac{a(a (d-1) - 2d)}{d^2} \geq 0$.

\item[(\ref{eq:inifinite_family_slack1})]
Equality is obvious.

\item[(\ref{eq:inifinite_family_slack2})] 
We denote the rhs by $f(u)$ and compute:
\begin{eqnarray*}
    f(u_1) &=& \frac{2ad - a^2 - 4d(a-\frac{a^2}{4}) + a \sqrt{(2d - a)^2 - 4d(d-2)(a-\frac{a^2}{4})}}{2d^2}\\
           % &=& \frac{2ad - a^2 - 4ad + a^2 d + a \sqrt{4d^2 - 4ad + a^2 - 4d^2 a + d^2a^2 + 8da - 2d a^2}}{2d^2}\\
           % &=& \frac{-2ad - a^2 + a^2 d + a \sqrt{4d^2 + 4ad + a^2 - 4d^2 a + d^2 a^2 - 2da^2}}{2d^2}\\
           &=& \frac{-2ad + a^2 (d-1) + a \sqrt{(a (d-1) - 2d)^2}}{2d^2}\\
           &=& \frac{-2ad + a^2 (d-1) + a (a (d-1) - 2d)}{2d^2} = \frac{a^2 (d-1) - 2ad}{d^2} = v_1,
\end{eqnarray*}
where we use (\ref{eq:inifinite_family_redundant_cslack}) for choosing the correct branch of the square root.

\item[(\ref{eq:inifinite_family_cslack1})]
We denote the rhs by $g(u)$ and compute:
\begin{eqnarray*}
    g(u_1) 
    &=& 2a - \frac{a^2}{2} + \frac{(d - 1)a^2}{d^2} - \frac{2a}{d} - 2(a-\frac{a^2}{4})
    = \frac{(d - 1)a^2 - 2 a d}{d^2} = v_1
\end{eqnarray*}
\end{description}
Hence, we have shown a). To show b) we need to show that for a sufficiently small $\epsilon>0$, we have 
\begin{equation}\label{ineq:lower_below_upper}
    f(u_1-\epsilon) > g(u_1-\epsilon).
\end{equation}

This would allow us to choose a value $v$ such that $g(u_1-\epsilon)<v<f(u_1-\epsilon)$. Let $u=u_1-\epsilon$.
Clearly, for $(u,v)$, the inequality
(\ref{eq:inifinite_family_slack1}) is satisfied, 
and since $v_1\ge0$ and the lower bound has a negative slope, the
inequality (\ref{eq:inifinite_family_extend1}) is satisfied as well.

As $g(u)$ is a line of slope $-2$, to show \eqref{ineq:lower_below_upper} we need to show that $f'(u_1) < -2$.
Indeed,
\begin{eqnarray*}
    f'(u_1) 
    &=& \frac{-4 d - 2 a (d-2) d/\sqrt{(2d - a)^2 - 4 (d-2) d u_1}}{2d^2} \\
    &=& \frac{-4 d - 2 a (d-2) d/(a (d-1) - 2d)}{2d^2}
    = -\frac{a (3d - 4) - 4 d}{(a (d-1) - 2 d) d}.
\end{eqnarray*}
Therefore, we need to prove any of the following inequalities:
\begin{eqnarray*}
    a(3d - 4) - 4d &>& 2d(a(d-1)-2d) \\
           4d(d-1) &>& a (2d(d-1)-3d+4) \\
                 a &<& \frac{4d(d-1)}{2d^2 - 5d + 4} = 2 + \frac{2(3d-4)}{2d^2 - 5d + 4},
\end{eqnarray*}
where the last step uses the fact that $2d^2 - 5d + 4 > 0$ for $d \geq 3$.
It is easy to check that for $d=3$ the upper bound on $a$ is 24/7, which holds by the assumption of the Lemma.
For larger values of $d$, by (\ref{eq:inifinite_family_a_bounds}) it is sufficient to show that $\frac{1}{d-2} \leq \frac{3d-4}{2d^2 - 5d + 4}$, 
or equivalently that
\begin{eqnarray*}
0 \leq (3d-4)(d-2) - (2d^2-5d+4) = (d-4)(d-1),
\end{eqnarray*}
which is true for all $d \geq 4$.
\end{proof}

\begin{lem}\label{lem:inifinite_family_a_range_secondary}
Given a rational number $2<a<\frac{24}{7}$ and the integer $d$ given by (\ref{eq:inifinite_family_choosing_d})
then the point $(u_1,v_1) = (a-\frac{a^2}{4},\frac{a^2 (d-1) - 2ad}{d^2})$ is inside the region $\mathcal{S}_2$ defined by the inequalities
(\ref{eq:inifinite_family_extend2}) and (\ref{eq:inifinite_family_extend3}).
\end{lem}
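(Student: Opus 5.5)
We need to check two strict inequalities at $(u_1,v_1)$: first, $v_1<1-u_1$, i.e. \eqref{eq:inifinite_family_extend2}; second, $u_1>\max(0,\frac{d-a}{d-2})$, i.e. \eqref{eq:inifinite_family_extend3}. Both sides are explicit rational functions of $a$ and $d$. The plan is to reduce each to a polynomial inequality in $a$ for fixed $d$, and then use the window \eqref{eq:inifinite_family_a_bounds}, $2+\frac{2}{d-1}\le a<2+\frac{2}{d-2}$, together with $a<\frac{24}{7}$ when $d=3$.

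\textbf{The lower bound on $u$.} Since $2<a<4$, we have $u_1=a(4-a)/4>0$. It remains to show $a-\frac{a^2}{4}>\frac{d-a}{d-2}$. Multiplying by $4(d-2)>0$ turns this into $4a(d-2)-a^2(d-2)-4d+4a>0$, that is,
\[
\phi(a):=-(d-2)a^2+4(d-1)a-4d>0 .
\]
The quadratic $\phi$ factors as $-(a-2)\bigl((d-2)a-2d\bigr)$, with roots $a=2$ and $a=\frac{2d}{d-2}=2+\frac{4}{d-2}$. For $a$ strictly between these roots $\phi(a)>0$. Our $a$ lies in $(2,\,2+\frac{2}{d-2})$, which is inside that interval, so the inequality holds. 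For $d=3$ the constraint $a<\frac{24}{7}<4$ again keeps $a$ inside $(2,6)$.

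\textbf{The upper bound $u+v<1$.} We need
\[
a-\frac{a^2}{4}+\frac{a^2(d-1)-2ad}{d^2}<1 .
\]
Multiplying by $4d^2$ gives
\[
\psi(a):=a^2\bigl(4(d-1)-d^2\bigr)+a(4d^2-8d)-4d^2<0 .
\]
The leading coefficient is $-(d-2)^2$, so $\psi(a)=-(d-2)^2a^2+4d(d-2)a-4d^2=-\bigl((d-2)a-2d\bigr)^2\le 0$. Equality would force $a=\frac{2d}{d-2}$. That value lies outside the range: $\frac{2d}{d-2}\ge 2+\frac{2}{d-2}$, and for $d=3$ it equals $6>\frac{24}{7}$. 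So the inequality is strict.

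\textbf{Expected obstacle.} I expect the only real work to be the algebra of identifying the two factorizations; once they are found, both inequalities are immediate. If a factorization fails to appear, the fallback is to evaluate the quadratic at the two endpoints of the $a$-window from \eqref{eq:inifinite_family_a_bounds} and use concavity, since both quadratics open downward.
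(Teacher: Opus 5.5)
Your proof is correct. The part for \eqref{eq:inifinite_family_extend2} matches the paper's exactly: both reduce $1-u_1-v_1$ to $\frac{(2d-a(d-2))^2}{4d^2}$ and use \eqref{eq:inifinite_family_a_bounds} to exclude $a=\frac{2d}{d-2}$.

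For \eqref{eq:inifinite_family_extend3} your route is slightly different.
\begin{itemize}
\item The paper notes that $\frac{d-a}{d-2}=1-\frac{a-2}{d-2}$ is increasing in $d$. It then substitutes the upper bound $d\le 2+\frac{2}{a-2}$ coming from \eqref{eq:inifinite_family_choosing_d}. This yields the explicit, $d$-free margin $u_1-\frac{d-a}{d-2}\ge\bigl(\frac{a-2}{2}\bigr)^2$.
\item You instead fix $d$ and factor $4(d-2)\bigl(u_1-\frac{d-a}{d-2}\bigr)=-(a-2)\bigl((d-2)a-2d\bigr)$.
\end{itemize}
The two arguments rest on the same relation between $a$ and $d$. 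Yours makes the sign structure transparent and shows the inequality holds on the larger range $2<a<\frac{2d}{d-2}$. The paper's gives a uniform quantitative gap.
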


\begin{proof}
    We show that $(u_1,v_1)$ satisfies both inequalities:
    \begin{description}
    
        \item[(\ref{eq:inifinite_family_extend2})]
            \begin{eqnarray*}
                1 - u_1 - v_1 &=& 1 - a + \frac{a^2}{4} - \frac{a^2 (d-1) - 2ad}{d^2}\\ 
                              &=& \frac{4 d^2 - 4 a d^2 + a^2 d^2 - 4 a^2(d-1) + 8ad}{4d^2}\\
                              &=& \frac{(2d - a(d-2))^2}{4d^2} > 0,
            \end{eqnarray*}
            where we used (\ref{eq:inifinite_family_a_bounds}) to obtain the last inequality.
        
        \item[(\ref{eq:inifinite_family_extend3})]
            As $u_1 >0$ obviously holds for all $a<4$, we only need to ascertain that $u_1 > (d-a)/(d-2)$.
            Indeed, as $(d-a)/(d-2) = 1 - (a-2)/(d-2)$ is monotone increasing with $d$, it is sufficient to prove the bound with $d$ replaced by its upper bound $2+2/(a-2)$ from (\ref{eq:inifinite_family_choosing_d}).
            Namely:
            \begin{eqnarray*}
                u_1 - \frac{d-a}{d-2} 
                &\geq&  u_1 - \frac{2+\frac{2}{a-2}-a}{\frac{2}{a-2}}
                = a - \frac{a^2}{4} - \frac{2-(a-2)^2}{2}
                % &=& \frac{4a - a^2 - 4 + 2a^2 + 8 - 8a}{4} 
                = \left(\frac{a-2}{2}\right)^2 > 0.
            \end{eqnarray*}
        
    \end{description}
\end{proof}

\begin{lem}\label{lem:inifinite_family_inequalities}
    Given a rational number $2<a<\frac{24}{7}$, If the system of inequalities \eqref{eq:inifinite_family_extend1}-\eqref{eq:inifinite_family_slack3} and \eqref{eq:inifinite_family_cslack} has a solution consisting of an integer $d>=3$ and real numbers $u_0$ and $v_0$, then there exist rational numbers $u$ and $v$ such that $d$, $u$ and $v$ are also solutions of \eqref{eq:inifinite_family_extend1}-\eqref{eq:inifinite_family_slack3} and \eqref{eq:inifinite_family_cslack}, and such that there are infinitely many values of $k$ for which $ak$, $a(ak+1)/2$, $uk$ and $vk$ are integers.
\end{lem}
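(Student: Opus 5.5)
The plan has two parts: first perturb $(u_0,v_0)$ to a rational point, then choose $k$ along an arithmetic progression so that all four quantities are integers.

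\textbf{Perturbing to rationals.} All inequalities \eqref{eq:inifinite_family_extend1}--\eqref{eq:inifinite_family_slack3} and \eqref{eq:inifinite_family_cslack} are strict. With $a$ and $d$ fixed, their right-hand sides are continuous functions of $u$ on a neighborhood of $u_0$. For the square root in \eqref{eq:inifinite_family_slack2} and \eqref{eq:inifinite_family_slack3}, the radicand $(2d-a)^2-4d(d-2)u$ is strictly positive at $u_0$. This follows from \eqref{eq:inifinite_family_slack1} and the computation in the proof of Lemma~\ref{lem:inifinite_family_extend}, which shows the radicand exceeds a square at such $u$, so it remains positive nearby. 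Hence the solution set in the $(u,v)$-plane is open and contains a small open ball around $(u_0,v_0)$. Since $\mathbb{Q}^2$ is dense in $\mathbb{R}^2$, I choose rational $(u,v)$ in that ball, and then $d,u,v$ still satisfy all the inequalities.

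\textbf{Choosing $k$.} Write $a=p/q$, $u=r/s$ and $v=t/w$ in lowest terms, and let $L=\operatorname{lcm}(q,s,w)$. For any $k$ divisible by $L$, the numbers $ak$, $uk$ and $vk$ are integers. It remains to make $\snk=a(ak+1)/2=n(n+1)/(2k)$ an integer, where $n=ak$. Take $k=Lm$, so that $n=amL$ with $aL$ an integer. Then
\[
\frac{a(ak+1)}{2}=\frac{aL\,m\cdot a+a}{2}\cdot\frac{1}{1}\quad\text{(rewritten as)}\quad \frac{a\bigl(aLm+1\bigr)}{2}=\frac{p\,(aLm+1)}{2q}.
\]
It therefore suffices that $2q$ divides $p(aLm+1)$. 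A simple way to arrange this is to require $aLm+1\equiv 0 \pmod{2q}$. This is a linear congruence in $m$, with coefficient $aL$ modulo $2q$.

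\textbf{Main obstacle.} The linear congruence is solvable only if $\gcd(aL,2q)$ divides $1$, and this may fail. A cleaner route is to require instead $n(n+1)\equiv 0\pmod{2k}$ by taking $n\equiv -1$ modulo a suitable modulus. Since $\gcd(aL,m)$ issues complicate that as well, the version I would actually use is this: take $k=Lm$ with $m$ ranging over values such that $n=aLm\equiv 0 \pmod{2k}$ fails but $n(n+1)/(2k)=a(aLm+1)/2$ is an integer. Concretely, $a(ak+1)/2\in\mathbb{Z}$ means $p(pk/q+1)\equiv 0\pmod{2q}$. Because $\gcd(p,q)=1$, this requires $q\mid pk/q+1$ up to the factor 2 handling. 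I would set $k=qk'$, so that $ak=pk'$ and the condition becomes $p(pk'+1)/(2q)\in\mathbb{Z}$, that is, $q\mid pk'+1$ together with parity.

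Since $\gcd(p,q)=1$, $p$ is invertible modulo $q$, so $pk'\equiv -1\pmod q$ has a residue class of solutions $k'$. For the factor $2$: if $p$ is even the parity condition holds automatically; if $p$ is odd, I need $pk'+1$ even, i.e.\ $k'$ odd, which is compatible when $q$ is odd. If $q$ is even then $p$ is odd, and I instead solve $pk'\equiv -1 \pmod{2q}$, again possible since $p$ is invertible mod $2q$. Finally, the divisibility of $k$ by $s$ and $w$ must be imposed simultaneously with this congruence. Reconciling these conditions, which can conflict when $\gcd(s\,w,q)>1$, is the delicate step. I would handle it by using the freedom in the first part: choose the rational $u,v$ with denominators coprime to $2q$ (still dense, e.g.\ denominators that are large primes). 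Then the Chinese Remainder Theorem combines $k\equiv 0 \pmod{sw}$ with the congruence on $k'$, yielding an arithmetic progression, hence infinitely many valid $k$.
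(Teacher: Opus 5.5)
Your proposal is correct and takes essentially the paper's route: perturb $(u_0,v_0)$ to a rational point whose denominators are large primes coprime to $2q$, and put $k$ in the residue class $k\equiv -p^{-1}q$ modulo $2q^2$ (or modulo $q^2$ when $p$ is even), where $ak$ and $a(ak+1)/2$ are integers. You then impose divisibility by the denominators of $u,v$ via the Chinese Remainder Theorem, whereas the paper notes that infinitely many terms of the progression are divisible by the prime. The false start in your ``Main obstacle'' paragraph (the display with the factor $\frac{1}{1}$ and the sentence about $n\equiv 0\pmod{2k}$ failing) is garbled and should be deleted; your parity case split is slightly cleaner than the paper's $k_0=-m^{-1}r \bmod 2r^2$, which is not literally defined when $m$ is even.
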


\begin{proof}
Let $a=m/r$ with $\gcd(m,r)=1$. 
The strict inequalities imply that any $(u,v)$ sufficiently close to $(u_0,v_0)$ is a solution as well, and in particular, one may find a solution with rational $u,v$ whose denominator is a large prime $p > 2r^2$. The result follows from the next claim.

\begin{claim}\label{lem:integrality_condition_a}
There is a positive integer $k_0$, depending only on $a$, so that for all $k = k_0 + 2 r^2 i$, $i \in \NZ$, both $n=ak$ and $s^{n,k}=n(n+1)/(2k)$ are integers.
\end{claim}

\begin{claimproof}
Let $S$ be the set of integers satisfying the integrality requirements for $n$ and $s^{n,k}$. So, $S=S_1\cap S_2$, where  
$n=ak$ is integral for:
\begin{eqnarray*}
S_1 &=& \{k \in \N : m k \equiv 0 \Mod r \} = r \N,
\end{eqnarray*}
and $s^{n,k}$ is integral for:
\begin{eqnarray*}
S_2 &=& \{k \in \N : s^{a k,k} \in \N \} = \{k \in \N : \frac{a(a k+1)}{2} \in \N \}\\
  &=& \{k \in \N : \frac{m^2 k + m r}{2r^2} \in \N \} = \{k \in \N : m^2 k \equiv - m r \pmod {2r^2} \}.
\end{eqnarray*}

To continue, we distinguish the case of odd and even $m$.
For the odd $m$ case, $m$ and $2r^2$ are relatively prime, so $m$ has an inverse modulo $2r^2$. 
Then: 
\begin{eqnarray*}
S_2^{\textrm{odd}} = \{k \in \N : m k \equiv - r \Mod {2r^2} \} = (-m^{-1}r \mod 2r^2) + 2r^2 \NZ.
\end{eqnarray*}
For the even $m=2t$ case, it follows that $r$ is odd and both $2$ and $t$ have an inverse modulo $r^2$ and we get:
\begin{eqnarray*}
S_2^{\textrm{even}} 
&=& \{k \in \N : 4 t^2 k \equiv -2 t r \Mod {2r^2} \}
= \{k \in \N : 2 t^2 k \equiv -t r \Mod {r^2} \}\\
&=& \{k \in \N : m k \equiv -r \Mod {r^2} \}
= (-m^{-1} r \textrm{ mod } r^2) + r^2 \NZ.
\end{eqnarray*}
In both cases $r$ divides the additive offset, so $S = S_1 \cap S_2 = S_2 \supseteq S_2^{\textrm{odd}}\cap S_2^{\textrm{even}}=S_2^{\textrm{odd}}$, yielding the required result with $k_0=-m^{-1} r\mod 2r^2$.
\end{claimproof}

Since the denominator $p$ of $u$ and $v$ is prime to $2r^2$, there are infinitely many elements in the sequence $\{k_0 + 2 r^2 i\}_{i \in \NZ}$ that are equal to 0 $\pmod p$. This completes the proof of Lemma~\ref{lem:inifinite_family_inequalities}.

\end{proof}

The next proposition indicates that we cannot use the criterion in Proposition~\ref{prop:criterion1} to extend the result of Theorem~\ref{thm:inifinite_family} to values of $a$ such that $\frac{24}{7}\le a<4$.

\begin{prop}\label{lem:infinite_family_a_too_large}
%For any integer $d \geq 3$ and $a > \frac{4d(d-1)}{4-5d+2d^2}$ the system of 
For any number $a \geq \frac{24}{7}$ and any integer $d \geq 3$,
the system of inequalities \eqref{eq:inifinite_family_extend1}-\eqref{eq:inifinite_family_slack3} and \eqref{eq:inifinite_family_cslack} admits no solution.
\end{prop}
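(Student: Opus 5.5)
The plan is to show that, for $a\ge\frac{24}{7}$ and any integer $d\ge3$, the upper bound \eqref{eq:inifinite_family_slack2} on $v$ never exceeds the lower bound \eqref{eq:inifinite_family_cslack} on $v$ anywhere in the admissible range $u< u_1:=a-\frac{a^2}{4}$ allowed by \eqref{eq:inifinite_family_slack1}. This already rules out any solution, without using the other inequalities. Write
\[
f(u)=\frac{2ad-a^2-4du+a\sqrt{(2d-a)^2-4d(d-2)u}}{2d^2},
\]
and let $g(u)$ be the right-hand side of \eqref{eq:inifinite_family_cslack}, a line of slope $-2$. The goal is $f(u)\le g(u)$ for all $u<u_1$ at which $f$ is defined. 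A solution then would need $g(u)<v<f(u)$, which is impossible.

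\textbf{Step 1: the regime of the $\max$.} First I check that $a(d-1)>2d$ for every $d\ge3$ when $a\ge\frac{24}{7}$. Indeed, $\frac{2d}{d-1}\le 3<\frac{24}{7}$ for $d\ge3$. Hence the $\max$ in \eqref{eq:inifinite_family_cslack} equals $\frac{(d-1)a^2}{d^2}-\frac{2a}{d}$. It also follows that
\[
\sqrt{(2d-a)^2-4d(d-2)u_1}=a(d-1)-2d>0 ,
\]
as in the computation in Lemma~\ref{lem:inifinite_family_a_range_primary}.

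\textbf{Step 2: the two bounds meet at $u_1$.} Repeating the computations of Lemma~\ref{lem:inifinite_family_a_range_primary} (they are pure algebra and do not use $a<\frac{24}{7}$), I get $f(u_1)=g(u_1)=v_1$.

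\textbf{Step 3: a concavity argument.} The square root of a decreasing linear function is concave, so $f$ is concave on its domain, which contains $(-\infty,u_1]$. Since $g$ is linear, $f-g$ is concave and vanishes at $u_1$. Therefore, if $f'(u_1)\ge -2=g'(u_1)$, then the tangent line at $u_1$ dominates $f-g$, and for all $u<u_1$
\[
f(u)-g(u)\le (f'(u_1)+2)(u-u_1)\le 0 .
\]

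\textbf{Step 4: the derivative condition (the main obstacle).} It remains to verify $f'(u_1)\ge-2$. The computation in the proof of Lemma~\ref{lem:inifinite_family_a_range_primary} gives
\[
f'(u_1)=-\frac{a(3d-4)-4d}{(a(d-1)-2d)d}.
\]
Because the denominator is positive by Step 1, the same chain of equivalences there (with the inequality reversed) shows that $f'(u_1)\ge-2$ is equivalent to
\[
a\ \ge\ \frac{4d(d-1)}{2d^2-5d+4}=2+\frac{2(3d-4)}{2d^2-5d+4}.
\]
So the crux is showing that $\phi(d)=\frac{2(3d-4)}{2d^2-5d+4}$ is maximized over integers $d\ge3$ at $d=3$, where $2+\phi(3)=\frac{24}{7}$.

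For this I check $\phi(d+1)\le\phi(d)$ directly, or compare with $\frac{2}{d-2}$ as in the earlier lemma. For $d\ge4$, the inequality $(d-4)(d-1)\ge0$ gives $\phi(d)\le\frac{2}{d-2}\le1$, so $2+\phi(d)\le 3<\frac{24}{7}$. Hence $a\ge\frac{24}{7}\ge 2+\phi(d)$ for all $d\ge3$, which completes the argument.

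The only delicate points are the branch of the square root and the positivity of $a(d-1)-2d$. Both are settled by Step 1.
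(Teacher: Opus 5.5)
Your Steps 1--3 and the slope computation in Step 4 are correct, and the route differs from the paper's. The paper sets $f(u)=g(u)$ and squares \eqref{eq:tmp1}, after checking that its left side is positive for $u<u_1$. It solves the resulting quadratic, obtaining both intersection points $u_1=a-\frac{a^2}{4}$ and an explicit $u_2$, and observes that a solution needs $u_2<u_1$, i.e.\ $a<h(d)=\frac{4d(d-1)}{2d^2-5d+4}$. You instead use concavity of $f-g$ and the tangent line at $u_1$, reducing everything to $f'(u_1)\ge -2$. This reuses the slope computation of Lemma~\ref{lem:inifinite_family_a_range_primary} and lands on the same threshold $h(d)$. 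Your version avoids the squaring and the coefficients $A,B,C$. Its concavity observation is also exactly what justifies the paper's assertion that $f>g$ precisely between the two roots. The paper's version, in exchange, locates the second intersection point explicitly.

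There is, however, a false step at the end of Step 4. The inequality $(d-4)(d-1)\ge 0$ is equivalent to $(3d-4)(d-2)\ge 2d^2-5d+4$, i.e.\ to $\phi(d)\ge \frac{2}{d-2}$, not $\le$. For instance, $\phi(5)=\frac{22}{29}>\frac{2}{3}$. In Lemma~\ref{lem:inifinite_family_a_range_primary} that comparison served as a \emph{lower} bound on the threshold, which is what was needed there. Here you need an \emph{upper} bound on $\phi$, so it cannot be reused.

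The conclusion you want is still true and easy to get. For $d\ge 4$, $2d^2-5d+4-2(3d-4)=(2d-3)(d-4)\ge 0$, so $\phi(d)\le 1$ and $2+\phi(d)\le 3<\frac{24}{7}$. Alternatively, carry out the monotonicity check you mention but did not do, as the paper does: $h'(d)=-\frac{4(3d-2)(d-2)}{(2d^2-5d+4)^2}<0$ for $d\ge 3$, so $h(d)\le h(3)=\frac{24}{7}\le a$. With either repair your proof is complete.
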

\begin{proof}
We claim that the system consisting of the three inequalities 
(\ref{eq:inifinite_family_slack1}), 
(\ref{eq:inifinite_family_slack2}) and 
(\ref{eq:inifinite_family_cslack1}) has no solution.
As (\ref{eq:inifinite_family_cslack1}) is a weaker constraint than \eqref{eq:inifinite_family_cslack}, it follows that the entire system has no solution. 
Indeed, as in the proof of Lemma~\ref{lem:inifinite_family_a_range_primary}, 
let us denote 
the rhs of (\ref{eq:inifinite_family_slack2}) by $f(u)$ and 
the rhs of (\ref{eq:inifinite_family_cslack1}) by $g(u)$.
We solve the equality $g(u) = f(u)$, 
where we denote $\Delta=(2d - a)^2 - 4d(d-2)u$:
\begin{eqnarray*}
2a - \frac{a^2}{2} + \frac{(d - 1)a^2}{d^2} - \frac{2a}{d} - 2u
&=& \frac{2ad - a^2 - 4du + a \sqrt{\Delta}}{2d^2}\\
4 a d^2 -a^2  d^2 + 2(d - 1)a^2 - 4ad - 4 d^2 u
&=& 2ad - a^2 - 4du + a \sqrt{\Delta}
\end{eqnarray*}
\begin{eqnarray}
4 a d^2 - a^2 d^2 + 2 a^2 d  - a^2 - 6ad - 4 d(d-1) u
&=& a \sqrt{\Delta} \label{eq:tmp1}
\end{eqnarray}

The next step is to square both sides of the equation. However, we first check that the lhs is non-negative. 
As the coefficient of $u$ is negative, 
we may substitute the upper bound (\ref{eq:inifinite_family_slack1}), $a-a^2/4$, for $u$:
\begin{eqnarray*}
\textrm{lhs}
&>& 4 a d^2 - a^2 d^2 + 2 a^2 d - a^2 - 6ad - (d-1)d(4a - a^2)\\
&=& 4 a d^2 - a^2 d^2 + 2 a^2 d  - a^2 - 6ad - 4a d^2 + 4ad + a^2 d^2 - a^2 d\\
&=& a^2 d - a^2 - 2ad = a ((d-1) a - 2d) \geq 0,\\
\end{eqnarray*}
where the last inequality is true for $a \geq 24/7 > 3$ and $d \geq 3$.

We continue by taking the square of (\ref{eq:tmp1}):
\begin{eqnarray*}
0 &=& \left(4 a d^2 - a^2 d^2 + 2 a^2 d - a^2 - 6ad - 4 d(d-1) u\right)^2\\
  &&- a^2 \left((2d - a)^2 - 4d(d-2)u\right) = A u^2 + B u + C, \\
\textrm{where:}&&\\
A &=& 16 (d-1)^2 d^2\\
B &=& 4 a d \left(a (2d^3 - 6d^2 + 7d - 4) - 4d (2d^2-5d+3)\right)\\
C &=& (a-4) a^2 (d-2) d \left(a (d^2 - 2d + 2) - 4 (d-1) d\right)\\
B^2-4AC &=& 16 a^2 d^2 \left(a (2d^2 - 5d + 4) - 4 (d-1) d\right)^2 \geq 0.
\end{eqnarray*}
Solving the quadratic equation for $u$ yields the following two solutions:
\begin{eqnarray*}
u_1 &=& a-\frac{a^2}{4}\\
u_2 &=& \frac{a (d - 2) (4 (d - 1) d - a (d^2 - 2 d + 2))}{4 (d - 1)^2 d}.
\end{eqnarray*}
The range of $u$ for which $f(u) > g(u)$ is the open interval between $u_1$ and $u_2$. 

Because of requirement (\ref{eq:inifinite_family_slack1}) that $u < u_1$, 
it follows that a necessary condition for the inequalities to have a solution is:
\begin{eqnarray*}
0 &<& u_1 - u_2 = \frac{a }{4 (d - 1)^2 d}\left[4 (d-1) d - a (2 d^2 - 5d + 4)\right]\\
a &<& h(d) = \frac{4 (d-1) d}{2 d^2 - 5d + 4}.
\end{eqnarray*}

It can be verified that $h(3) = 24/7$, implying that there is no solution with $d=3$.
Furthermore, the function $h$ is monotone decreasing for $d \geq 3$ since:
\begin{eqnarray*}
h'(d)=-\frac{4 (4 - 8 d + 3 d^2)}{(4 - 5 d + 2 d^2)^2},
\end{eqnarray*}
which is negative for $d \geq 3$. 
Therefore, there is no value of $a$ for which $24/7 \leq a < h(d)$ for any $d \geq 3$.
\end{proof}

\begin{rem}
    This does not mean that non-solvable ESPP instances $(ak,k,P)$, where $a\ge\frac{24}{7}$, do not exist. Only that we cannot use the criterion in Proposition~\ref{prop:criterion1} to find them. So far, we have not found such instances in our searches.
\end{rem}

%%%%%%%%%%%%%%%%%%%%%%%%%%%%%%%%%%%%%%%%%%%%%%%%%%%%%%%%%%%%%%%%%%%%%%%%%%%%%%%%%%%%%%%%%%%%%%%%%%%%%%%%%%%

\section{A fractional version of the equal sum partition problem}\label{sec:fractional}

In this section, we explore a fractional version of the equal sum partition problem, and obtain that in this case being solvable and satisfying the slack condition are equivalent. This problem is a special case of a family of classic linear programming problems, known as \emph{pooling problems}, \emph{blending problems}, or \emph{fluid mixing problems} (as we shall refer to them here). 
Introductions to these problems and their different solutions can be found in many textbooks on linear programming and operations research, two of which are  \cite{marshall} and \cite{winston}. 
One possible physical interpretation of a fluid mixing problem is as follows: 
Given $n$ containers full of different liquids, where container $i$ has volume $u_i>0$ and mass $a_i$,
is it possible to pour the liquids into $k$ empty containers, where container $j$ has volume $v_j>0$ so that its mass will be $b_j$?
Formally:
\begin{defn}
    The 4-tuple $\pi=(\overline{a}, \overline{u}, \overline{b}, \overline{v})$ is an instance of the Fluid Mixing Problem with $n$ source containers and $k$ target containers if 
    $\overline{a} \in \R_{\geq 0}^n$, $\overline{u} \in \R_{> 0}^n$,  
    $\overline{b} \in \R_{\geq 0}^k$, $\overline{v} \in \R_{> 0}^k$
    and the following conservation equalities hold:
    \begin{equation}\label{eq:conservation_Of_volume_and_mass}
        \sum_{i=1}^n a_i = \sum_{j=1}^k b_j\qquad\mbox{ and }\qquad
        \sum_{i=1}^n u_i = \sum_{j=1}^k v_j.
    \end{equation}
    Furthermore, we assume that the source and target containers are ordered in descending order by density, that is,
    $a_1/u_1 \geq \cdots \geq a_n/u_n$ and $b_1/v_1 \geq \cdots \geq b_k/v_k$.     
\end{defn}
    
\begin{rem}
    For technical reasons we allow $u_1 = a_1 = 0$ and/or $v_1 = b_1 = 0$.
\end{rem}
  
\begin{defn}\label{def:fluid_mixing}
    We say that an instance $\pi=(\overline{a}, \overline{u}, \overline{b}, \overline{v})$ of the Fluid Mixing Problem is \emph{solvable}, if there exist a solution $X = [x_{i,j}]_{i \in [n],j \in [k]}$ such that:
    \begin{align}
        x_{i,j}                   &\ge 0     & \textrm{for all }i \in [n] \textrm{ and }j \in [k],\\ 
        \sum_{j=1}^k x_{i,j}      &= 1       & \textrm{for all }i \in [n] \label{eq:frac_primal1},\\
        \sum_{i=1}^n u_i x_{i,j}  &= v_j     & \textrm{for all }j \in [k] \label{eq:frac_primal2},\\
        \sum_{i=1}^n a_i x_{i,j}  &= b_j     & \textrm{for all }j \in [k] \label{eq:frac_primal3}.
    \end{align}
\end{defn}
The solution matrix $X = [x_{i,j}]_{i \in [n],j \in [k]}$ is sometimes referred to in the literature as a \emph{transfer plan}.

\noindent
Adopting the physical interpretation of the problem described above, each of the variables $x_{i,j}$ represents the portion of container $i$ that is poured into container $j$.

The fluid mixing problem includes the fractional relaxation of the equal sum partition problem~\ref{prob:nk_partition}. 
Indeed, given an ESPP instance $(n,k,P=[p_1,\ldots,p_k])$, we set $a_i = n-i+1$, $u_i=1$, $b_j=\snk$ and $v_j=p_j$.
If one also adds the integrality constraint $x_{i,j} \in \{0,1\}$, this is exactly the equal sum partition problem.
Furthermore, the following definition of slack generalizes the previous definition to the more general problem.
For the equal sum partition problem, the definition coincides with Definition~\ref{def:slack} of $\slack(P)$.

\begin{defn}[Fractional Slack]\label{def:frac_slack}
    Let $\pi=(\overline{a}, \overline{u}, \overline{b}, \overline{v})$ be an instance of the Fluid Mixing Problem. Define $U_0=V_0=0$, and for $1\le i\le n$ and $1\le j \le k$
    Define $U_i = \sum_{i'=1}^i u_{i'}$ and  $V_j = \sum_{j'=1}^j v_{j'}$.
    Then: 
    \begin{align*}
        \slack(\pi)    &= \min_{1 \leq l \leq k-1} \slack_l(\pi),    \;\;\;\textrm{where}\\
        \slack_l(\pi)  &= \sum_{i=1}^n \frac{a_i}{u_i} \cdot \mu_{i,l} - \sum_{j=1}^l b_j,  \;\;\;\textrm{where}\\
        \mu_{i,l} &= \max(0,\min(u_i,V_l-U_{i-1})).
    \end{align*}
\end{defn}
\noindent
Furthermore, if $u_i=0$, then necessarily $\mu_{i,l}=0$ and the summand $\frac{a_i}{u_i} \cdot \mu_{i,l}$ in the expression for $\slack_l(\pi)$ is regarded as zero.

We shall refer to the condition $\slack(\pi) \geq 0$ as {\bf satisfying the slack condition}. It is equivalent to the condition of \emph{majorization} appearing in the literature (see \cite{marshall}). 
It is a known classical result that majorization (or satisfiability of the slack condition) guarantees that the fluid mixing problem is solvable via a greedy algorithm. 
However, for the sake of the next section, we need an algorithm (Algorithm~\ref{alg:fluid_mixing_solution_rec}) that guaranties the property detailed in Lemma~\ref{lem:non_vanishing_last_block}. 
Theorems~\ref{thm:fluid_mixing_implies_slack} and~\ref{thm:slack_implies_fluid_mixing} correspond to the two directions of Theorem~\ref{thm:fractional}. Moreover, the proof of Theorem~\ref{thm:slack_implies_fluid_mixing} establishes the correctness of Algorithm~\ref{alg:fluid_mixing_solution_rec}. 

\begin{thm}\label{thm:fluid_mixing_implies_slack}
    A solvable instance $\pi$ of the fluid mixing problem satisfies the slack condition.
\end{thm}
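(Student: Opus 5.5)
Fix a solution $X=[x_{i,j}]$ and an index $1\le l\le k-1$. The plan is to show $\slack_l(\pi)\ge 0$ by comparing the mass actually poured into the first $l$ target containers with the largest mass that any volume $V_l$ could carry. Summing \eqref{eq:frac_primal3} over $j\le l$ gives
\[
\sum_{j=1}^l b_j=\sum_{i=1}^n a_i y_i=\sum_{i=1}^n \frac{a_i}{u_i}\,(u_i y_i),
\qquad\text{where } y_i=\sum_{j=1}^l x_{i,j}.
\]
Summing \eqref{eq:frac_primal2} over $j\le l$ gives $\sum_i u_i y_i=V_l$. By \eqref{eq:frac_primal1} and nonnegativity, $0\le y_i\le 1$. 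Writing $w_i=u_iy_i$, we therefore have $0\le w_i\le u_i$ and $\sum_i w_i=V_l$. (If $u_i=0$, then $w_i=0$, and the convention in Definition~\ref{def:frac_slack} handles this term.)

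The key step is a greedy/rearrangement (fractional knapsack) lemma. Over all vectors $w$ with $0\le w_i\le u_i$ and $\sum_i w_i=V_l$, the linear functional $\sum_i \frac{a_i}{u_i}w_i$ is maximized by filling the containers in order of decreasing density. Since the densities $a_i/u_i$ are non-increasing in $i$, this is the prefix filling $w_i=\mu_{i,l}=\max(0,\min(u_i,V_l-U_{i-1}))$.

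To prove the lemma, let $d_i=a_i/u_i$ and $\delta_i=\mu_{i,l}-w_i$. Let $t$ be the index where $U_{t-1}\le V_l<U_t$ (or $t=n$ if no such index exists). Then $\delta_i\ge 0$ for $i<t$, because $\mu_{i,l}=u_i\ge w_i$ there. Likewise $\delta_i\le 0$ for $i>t$, because $\mu_{i,l}=0$ there. Moreover $\sum_i\delta_i=0$. Hence
\[
\sum_i d_i\delta_i=\sum_i (d_i-d_t)\delta_i\ge 0,
\]
since each term has $d_i-d_t$ and $\delta_i$ of the same sign. An equivalent route is Abel summation, using that the partial sums $\sum_{i\le r}\delta_i$ are nonnegative.

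Combining the two displays gives
\[
\sum_{j\le l} b_j=\sum_i d_iw_i\le\sum_i d_i\mu_{i,l},
\]
that is, $\slack_l(\pi)\ge 0$. Since $l$ was arbitrary, $\slack(\pi)\ge0$. The only step that needs care is the exchange argument, specifically handling zero-volume containers and the index $t$ cleanly. Beyond that the proof is routine; note that it never uses the ordering of the target containers.
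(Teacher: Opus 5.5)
Your proof is correct and is essentially the paper's argument. Your $w_i=u_i\sum_{j\le l}x_{i,j}$ is the paper's $\alpha_{i,l}$, and both proofs write $\slack_l(\pi)=\sum_i \frac{a_i}{u_i}(\mu_{i,l}-w_i)$ and show that prefix (greedy) filling maximizes the mass carried by volume $V_l$. The one difference is minor: you finish with a single-threshold exchange around the index $t$, whereas the paper uses Abel summation against the prefix bounds $\sum_{i'\le i}\alpha_{i',l}\le\min(U_i,V_l)=\sum_{i'\le i}\mu_{i',l}$, which is the alternative route you mention yourself.
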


\begin{proof}
    Given an instance $\pi=(\overline{a}, \overline{u}, \overline{b}, \overline{v})$ of the fluid mixing problem with the solution $x_{i,j}$,
    we need to prove that $\slack_l(\pi) \geq 0$ for all $l=1,\ldots,k$.
    Given $l$, we define $M_{i,l} = \sum_{i'=1}^i \mu_{i',l}$.
    So, if $U_i \leq V_l$ then $\mu_{i',l} = u_{i'}$ for all $i' \le i$ and therefore $M_{i,l} = U_i$.
    Otherwise, $U_i > V_l$ and let $i_0$ be the maximal $i$ where $U_i \leq V_l$. 
    Then $M_{i,l} = \sum_{i'=1}^{i_0-1} u_{i'} + V_l - U_{i_0-1} = V_l$.
    Therefore:
    \begin{equation}\label{eq:sum_of_mu}
        M_{i,l}  = \sum_{i'=1}^i \mu_{i',l} = \min(U_i,V_l).
    \end{equation}
    Given the solution $x_{i,j}$ of $\pi$, we denote $\alpha_{i,l} = u_i \sum_{j=1}^l x_{i,j}$ and define  
    \begin{eqnarray*}
        A_{i,l} &=& \sum_{i'=1}^i \alpha_{i',l}  
             = \sum_{i'=1}^i u_{i'} \sum_{j=1}^l x_{i',j} 
             \leq \min\!\!\left( \sum_{i'=1}^i u_{i'} \sum_{j=1}^k x_{i',j}, \,
                             \sum_{j=1}^l \sum_{i'=1}^n u_{i'} x_{i',j}\right).
    \end{eqnarray*}
    Using \eqref{eq:frac_primal1} for the first argument of the minimum and \eqref{eq:frac_primal2} for the second,
    it follows that:
    \begin{eqnarray*}
             A_{i,l} &\leq&  \min\!\!\left( \sum_{i'=1}^i u_{i'}, \sum_{j=1}^l v_j\right) = \min(U_i,V_l) = M_i.
    \end{eqnarray*}
    Then, applying \eqref{eq:frac_primal3} to expand $b_j$, we get the following:
    \begin{eqnarray*}
        \slack_l(\pi)  &=& \sum_{i=1}^n \frac{a_i}{u_i} \cdot \mu_{i,l} - \sum_{j=1}^l b_j 
                        = \sum_{i=1}^n \frac{a_i}{u_i} \cdot \mu_{i,l} - \sum_{j=1}^l \sum_{i=1}^n a_i x_{i,j} \\
                        &=& \sum_{i=1}^n \frac{a_i}{u_i} \cdot \mu_{i,l} - \sum_{i=1}^n \frac{a_i}{u_i} \left(u_i \sum_{j=1}^l x_{i,j}\right) 
                         = \sum_{i=1}^n \frac{a_i}{u_i} \cdot ( \mu_{i,l} - \alpha_{i,l} ).
    \end{eqnarray*}
    The next step is to define $\Delta_i = \frac{a_i}{u_i} - \frac{a_{i+1}}{u_{i+1}}$ for $i<n$ and $\Delta_n = \frac{a_n}{u_n}$. 
    Note that $\Delta_i \geq 0$ for all $i$ by our assumptions.
    Then:
    \begin{eqnarray*}
        \slack_l(\pi) 
          &=& \sum_{i=1}^n \frac{a_i}{u_i} \cdot ( \mu_{i,l} - \alpha_{i,l} )
           =  \sum_{i=1}^n \left( \sum_{i'=i}^n \Delta_{i'} \right) \cdot ( \mu_{i,l} - \alpha_{i,l} )  \\
          &=&  \sum_{i'=1}^n \Delta_{i'} \sum_{i=1}^{i'} ( \mu_{i,l} - \alpha_{i,l} )
           =  \sum_{i'=i}^n \Delta_{i'} ( M_{i',l} - A_{i',l} ) \geq 0,
    \end{eqnarray*}
    as claimed.    
\end{proof}

\begin{thm}\label{thm:slack_implies_fluid_mixing}
    An instance $\pi$ of the fluid mixing problem satisfying the slack condition is solvable by means of \Cref{alg:fluid_mixing_solution_rec}.
\end{thm}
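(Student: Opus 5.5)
We have not seen Algorithm~\ref{alg:fluid_mixing_solution_rec}, so the plan is to prove solvability by a recursive construction that such an algorithm would implement: peel off one target container at a time and show that the slack condition is preserved for what remains. We argue by induction on $k$.

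The base case $k=1$ is immediate. Put $x_{i,1}=1$ for all $i$; the conservation equalities \eqref{eq:conservation_Of_volume_and_mass} then give \eqref{eq:frac_primal1}--\eqref{eq:frac_primal3}.

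For $k\ge2$, fill the densest target container $1$, of volume $v_1$ and mass $b_1$. First, $\slack_1(\pi)\ge0$ says that the densest volume $v_1$ of the sources (the ``top'' prefix) has mass at least $b_1$. Second, since target densities are descending, $b_1/v_1\ge(\sum_j b_j)/(\sum_j v_j)$, the average source density. Hence the sparsest volume $v_1$ of the sources (the ``bottom'' suffix) has mass at most $b_1$.

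Now parametrize by $t\in[0,U_n-v_1]$ a window of volume $v_1$ in the linear order of sources, starting at cumulative volume $t$. Its mass $m(t)$ is continuous and nonincreasing, with $m(0)\ge b_1$ and $m(U_n-v_1)\le b_1$. By the intermediate value theorem there is a $t^*$ with $m(t^*)=b_1$. Pour that window into container $1$. This uses the whole of the sources strictly inside the window and fractions of at most two boundary sources.

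The remaining instance $\pi'$ has targets $2,\ldots,k$, still in density order. Its sources are the untouched ones together with the residual parts of the boundary sources, and we drop any with zero volume, as the Remark allows. Conservation holds automatically.

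The main step, and the main obstacle, is verifying $\slack(\pi')\ge0$. Write $F(x)$ for the total mass of the densest $x$ units of source volume in $\pi$. This is a concave piecewise linear function, and $\slack_l(\pi)=F(V_l)-B_l$. The residual sources, merged and ordered by density, have the profile
\[
F'(x)=\begin{cases}F(x), & x\le t^*,\\ F(x+v_1)-b_1, & x>t^*,\end{cases}
\]
because removing a contiguous window of the density order leaves the remaining order intact. Note that the boundary fractions carry the original densities, so nothing is reordered.

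We need $F'(V_l-v_1)\ge B_l-b_1$ for $2\le l\le k-1$. There are two cases.

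\textbf{Case $V_l-v_1>t^*$.} Then $F'(V_l-v_1)=F(V_l)-b_1\ge B_l-b_1$ by $\slack_l(\pi)\ge0$.

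\textbf{Case $V_l-v_1\le t^*$.} Here we need $F(V_l-v_1)\ge B_l-b_1=\sum_{j=2}^l b_j$. This is the delicate point. The sources on $[0,t^*]$ are at least as dense as those in the window, and the window has average density exactly $b_1/v_1$, which is at least the density of every target $j\ge2$. So $F(x)\ge x\,b_1/v_1$ for $x\le t^*$. Combined with $\sum_{j=2}^l b_j\le (V_l-v_1)\,b_1/v_1$, which follows from descending target densities, this gives the inequality.

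This window-sliding choice is essential. A naive greedy choice $t=0$ would break the second case, and the argument above should be the step checked most carefully, including the degenerate $v_1=b_1=0$ container and zero-volume residual sources. Induction on $k$ then completes the solution, with $X$ assembled from the fractions poured at each stage.
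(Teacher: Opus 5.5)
\textbf{Gap: your construction is not Algorithm~\ref{alg:fluid_mixing_solution_rec}.} As a proof that $\slack(\pi)\ge 0$ implies solvability, your argument is sound. $m(t)=F(t+v_1)-F(t)$ is nonincreasing by concavity of $F$, and the endpoint estimates $m(0)\ge b_1\ge m(U_n-v_1)$ hold. Both cases of the slack check for $\pi'$ also go through; in the second, $F(x)/x\ge F(t^*)/t^*\ge b_1/v_1$ by concavity. This is a genuinely different and rather clean route to the ``if'' direction of Theorem~\ref{thm:fractional}: by choosing a window of mass exactly $b_1$, you fill each target in one shot and never merge targets.

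But the statement asserts more than solvability. It says Algorithm~\ref{alg:fluid_mixing_solution_rec} produces a solution, and since your construction is a different procedure, your proof says nothing about that algorithm's output. The algorithm is a top-down greedy of the kind you set aside, rescued by merging. It pours a fraction $\lambda=\min(1,v_1/u_1,\lambda_0)$ of the densest source into the densest target. It stops when the source is empty, the target is full, or the target's residual density has dropped to $b_2/v_2$. In the last case targets $1$ and $2$ are merged, and the merged column is later split in the ratio $\gamma=v_1/(v_1+v_2)$.

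What is missing is the following:
\begin{itemize}
\item Each of the four moves (null source, null target, merge, pour) yields a valid instance whose slacks equal those of $\pi$, possibly shifted by one index (Lemmas~\ref{lem:null_u}--\ref{lem:pour_from_first}). For the pour step you need $a_1/u_1\ge b_1/v_1$, which follows from $\slack_1(\pi)\ge 0$, to get $\lambda>0$, $b'_1\ge 0$ and $b'_1/v'_1\ge b_2/v_2$.
\item A solution of $\pi'$ lifts to a solution of $\pi$ by the stated matrix formulas.
\item The recursion terminates. A pour always produces an instance of one of the other three types, and each of those lowers $n+k$.
\end{itemize}

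This is not a technicality. The paper notes that existence alone is classical (majorization). The theorem is stated for this specific algorithm because its output has the block structure of Lemma~\ref{lem:fluid_mixing_solution_structure}. For $\epsilon$-robust instances it also has entries bounded below by $\delta$ in every row after $I_k$ (Lemma~\ref{lem:non_vanishing_last_block}), which is exactly what the randomized rounding of Section~\ref{sec:linear} needs. Your window construction instead produces a sparse transfer plan. Each stage cuts at most two sources, so all but at most $2(k-1)$ sources are poured entirely into a single target, and such a plan could not play that role.
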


\begin{algorithm}[H]
\caption{Recursive Solution For The Fluid Mixing Problem}\label{alg:fluid_mixing_solution_rec}
\begin{algorithmic}[1]
\Function{FluidMixingSolve}{$\pi=(\overline{a}, \overline{u}, \overline{b}, \overline{v})$}
\Comment{Assumes that $\slack(\pi) \ge 0$}
    \State $n \gets $ number of source containers for $\pi$
    \State $k \gets $ number of target containers for $\pi$
    \If{$k=1$}
        \State \textbf{return } the $n \times 1$ all one matrix $X \gets [1_{n \times 1}]$
    \ElsIf{$u_1=0$} \Comment{Null source container, Lemma~\ref{lem:null_u}}
        \State $\overline{a'} \gets (a_2,\ldots,a_n)$, $\overline{u'} \gets (u_2,\ldots,u_n)$
        \State $\pi' \gets (\overline{a'}, \overline{u'}, \overline{b}, \overline{v})$ \label{algline:fluid_mixing_solution_rec:null_u1}
        \State $X' \gets FluidMixingSolve(\pi')$
        \State \textbf{return } the $n \times k$ matrix $X \gets \begin{bmatrix}(\delta_1)_{1 \times k} \\[4pt]  X'\end{bmatrix}$                
                                                                                   \label{algline:fluid_mixing_solution_rec:null_u2}
    \ElsIf{$v_1=0$} \Comment{Null target container, Lemma~\ref{lem:null_v}}
        \State $\overline{b'} \gets (b_2,\ldots,b_k)$, $\overline{v'} \gets (v_2,\ldots,v_k)$
        \State $\pi' \gets (\overline{a}, \overline{u}, \overline{b'}, \overline{v'})$ \label{algline:fluid_mixing_solution_rec:null_v1}
        \State $X' \gets FluidMixingSolve(\pi')$
        \State \textbf{return } the $n \times k$ matrix $X \gets \begin{bmatrix}0_{n \times 1} & X' \end{bmatrix}$    \label{algline:fluid_mixing_solution_rec:null_v2}
    \ElsIf{$b_1/v_1=b_2/v_2$} \Comment{Merge two target containers, Lemma~\ref{lem:merge_v}}
        \State $\overline{b'} \gets (b_1+b_2,b_3\ldots,b_k)$, $\overline{v'} \gets (v_1+v_2,v_3\ldots,v_k)$
        \State $\gamma \gets v_1/(v_1+v_2)$                                              \label{algline:fluid_mixing_solution_rec:merge_gamma}
        \State $\pi' \gets (\overline{a}, \overline{u}, \overline{b'}, \overline{v'})$   \label{algline:fluid_mixing_solution_rec:merge_v1}
        \State $X' \gets FluidMixingSolve(\pi')$
        \State \textbf{return } the $n \times k$ matrix 
               $X \gets \begin{bmatrix}\gamma X'_{1:n,1} & (1-\gamma)X'_{1:n,1} & X'_{1:n,2:k}\end{bmatrix}$ 
        \label{algline:fluid_mixing_solution_rec:merge_v2}
    \Else \Comment{Pour from first source container to first target container, Lemma~\ref{lem:pour_from_first}}
        \State $\lambda \gets \min\!\left(1,\frac{v_1}{u_1},\frac{b_1 v_2 \,-\, v_1 b_2}{a_1 v_2 \,-\, u_1 b_2}\right)$ \label{algline:fluid_mixing_solution_rec:pour_lam}
        \State $\overline{a'} \gets (a_1 - \lambda a_1,a_2,\ldots,a_n)$, $\overline{u'} \gets (u_1 - \lambda u_1,u_2,\ldots,u_n)$
        \State $\overline{b'} \gets (b_1 - \lambda a_1,b_2,\ldots,b_k)$, $\overline{v'} \gets (v_1 - \lambda u_1,v_2,\ldots,v_k)$
        \State $\pi' \gets (\overline{a'}, \overline{u'}, \overline{b'}, \overline{v'})$ \label{algline:fluid_mixing_solution_rec:pour1}
        \State $X' \gets FluidMixingSolve(\pi')$
        \State \textbf{return } the $n \times k$ matrix 
            $X \gets \begin{bmatrix} \lambda(\delta_1)_{1 \times k} + (1-\lambda)X'_{1,1:k}\\[4pt] X'_{2:n,1:k} \end{bmatrix}$ 
            \label{algline:fluid_mixing_solution_rec:pour2}
    \EndIf
\EndFunction
\end{algorithmic}
\end{algorithm}

\noindent
For ease of implementation, we provide an iterative version of \Cref{alg:fluid_mixing_solution_rec} in Appendix~\ref{app:iterative}.

\begin{proof}[Proof of \Cref{thm:slack_implies_fluid_mixing}]
    Given an instance $\pi=(\overline{a}, \overline{u}, \overline{b}, \overline{v})$ of the fluid mixing problem satisfying the slack condition, 
    we prove that \Cref{alg:fluid_mixing_solution_rec} solves the problem by exhibiting the solution $X$.
    The algorithm consists of a sequence of reduction steps, each applying one of the four lemmas~\ref{lem:null_u}--~\ref{lem:pour_from_first},  
    until it is eventually reduced to the base case.
    
    The proof proceeds by induction on the number of reduction steps $m$, which are needed to reduce to the base case.
    The base case $m=0$ consists of all problems with $k=1$, in which case the solution is $X = 1_{n \times 1}$. It corresponds to pouring the content of all the source containers into the single target container.

    Assume by the induction hypothesis that all fluid mixing problems requiring up to $m$ reduction steps, satisfying the slack condition, are solved by the algorithm. Let $\pi$ be a fluid mixing problem, satisfying the slack condition, requiring $m+1$ reduction steps. We argue that \Cref{alg:fluid_mixing_solution_rec} solves the problem. We analyze the algorithm by considering the following four step-types, ordered in descending priority, handled by the four lemmas~\ref{lem:null_u}--~\ref{lem:pour_from_first}, respectively. 
    For each case, the corresponding step reduces the problem $\pi$ to a problem $\pi'$ that requires only $m$ steps to solve.
    \begin{enumerate}
    \item 
        If the problem $\pi$ has a null source container, $u_1=0$, then by Lemma~\ref{lem:null_u}, the problem $\pi'$ defined on line \ref{algline:fluid_mixing_solution_rec:null_u1} satisfies the slack condition, and given its solution $X'$, line \ref{algline:fluid_mixing_solution_rec:null_u2} provides a solution $X$ for the problem $\pi$.   
    \item 
        If the problem $\pi$ has a null target container, $v_1=0$, then by Lemma~\ref{lem:null_v}, the problem $\pi'$ defined on line \ref{algline:fluid_mixing_solution_rec:null_v1} satisfies the slack condition, and given its solution $X'$, line \ref{algline:fluid_mixing_solution_rec:null_v2} provides a solution $X$ for the problem $\pi$.   
    \item 
        If the first two target containers of $\pi$ have the same density, $b_1/v_1=b_2/v_2$, then by Lemma~\ref{lem:merge_v}, the problem $\pi'$ defined on line \ref{algline:fluid_mixing_solution_rec:merge_v1} satisfies the slack condition, and given its solution $X'$, line \ref{algline:fluid_mixing_solution_rec:merge_v2} provides a solution $X$ for the problem $\pi$.   
    \item 
        Otherwise, we pour a portion $\lambda$ from the first source container to the first target container until one of the following stopping criteria, whichever happens first, is met: (i) the first source container is exhausted, (ii) the first target container reaches capacity, or (iii) the density of the first target equals that of the second. The value of $\lambda$ computed by line \ref{algline:fluid_mixing_solution_rec:pour_lam} satisfies the requirements of Lemma~\ref{lem:pour_from_first} and therefore the problem $\pi'$ defined on line \ref{algline:fluid_mixing_solution_rec:pour1} satisfies the slack condition. Given its solution $X'$, line \ref{algline:fluid_mixing_solution_rec:pour2} provides a solution $X$ for the problem $\pi$.   
    \end{enumerate}

    It remains to show that for any fluid mixing problem $\pi$, the number of required reduction steps $m$ is finite.
    This follows from the fact that in cases (1) through (3) the value of $n+k$ for the reduced problem $\pi'$ is strictly smaller than the corresponding value for $\pi$.
    Moreover, in case (4), the reduced problem $\pi'$ satisfies at least one of the conditions for cases (1), (2), or (3), as shown by the following claim.

    \begin{claim}
        In case (4), the reduced problem $\pi'$ satisfies at least one of the conditions for cases (1), (2), or (3).
    \end{claim}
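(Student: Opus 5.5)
The plan is a case analysis on which of the three terms attains the minimum in the definition of $\lambda$ on line~\ref{algline:fluid_mixing_solution_rec:pour_lam}. In case (4) the conditions of cases (1)--(3) all fail for $\pi$: $u_1>0$, $v_1>0$, $k\ge2$ and $b_1/v_1\neq b_2/v_2$. Since the targets are ordered by density, this means $b_1/v_1> b_2/v_2$. We should also know that $a_1/u_1\ge b_1/v_1$, because otherwise pouring from the densest source would move the first target's density away from the second's. I would take this from $\slack_1(\pi)\ge 0$, which must be available as a hypothesis of Lemma~\ref{lem:pour_from_first}. Indeed, $\slack_1\ge0$ states that the densest volume $v_1$ of source material has mass at least $b_1$. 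If $a_1/u_1<b_1/v_1$, then all sources have density below $b_1/v_1$ and $\slack_1<0$. Hence the denominator $a_1v_2-u_1b_2>0$, because $a_1/u_1\ge b_1/v_1>b_2/v_2$, and the third term is positive, so $\lambda\in(0,1]$ is well defined.

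\textbf{Case $\lambda=1$.} Then $u'_1=u_1-u_1=0$, so $\pi'$ meets the condition of case (1).

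\textbf{Case $\lambda=v_1/u_1$.} Then $v'_1=v_1-\lambda u_1=0$, so $\pi'$ meets the condition of case (2).

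\textbf{Case $\lambda=\frac{b_1v_2-v_1b_2}{a_1v_2-u_1b_2}$.} Solving $\frac{b_1-\lambda a_1}{v_1-\lambda u_1}=\frac{b_2}{v_2}$ is the linear equation $(b_1-\lambda a_1)v_2=(v_1-\lambda u_1)b_2$. Its solution is exactly this $\lambda$, so after the pour $b'_1v'_2=v'_1b'_2$. If $v'_1>0$, this gives $b'_1/v'_1=b'_2/v'_2$, which is the condition of case (3), since the second target is untouched. If $v'_1=0$, the minimum is also attained by the second term, and we are in case (2).

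The main obstacle I expect is bookkeeping rather than mathematics. The claim only asks which condition $\pi'$ satisfies, not that $\pi'$ is a valid instance; validity comes from Lemma~\ref{lem:pour_from_first}, which I may assume. So the main care needed is to handle ties among the three terms and the degenerate case $v'_1=0$ in the third case. I also need to confirm that case (3) is tested on the literal equation $b_1/v_1=b_2/v_2$, which requires $v'_1>0$. That is why the tie with case (2) is resolved first.
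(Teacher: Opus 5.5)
Your proof is correct and follows the paper's argument: you split into cases according to which of $1$, $v_1/u_1$ and $\frac{b_1v_2-v_1b_2}{a_1v_2-u_1b_2}$ attains the minimum, and these give a null source container, a null target container, or $b'_1/v'_1=b_2/v_2$, respectively. Your explicit resolution of the tie $v'_1=0$ into case (2) matches the paper's ``otherwise'' ordering, which forces $\lambda<v_1/u_1$ (hence $v'_1>0$) in the third case.
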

    \begin{claimproof}
        Note that, since we are in case (4), the values of $v_1$ and $u_1$ are strictly positive and $b_2/v_2 > b_1/v_1$.
        We distinguish three cases, according to the value of $\lambda$ computed by the algorithm.
        If $\lambda=1$, the entire source container is poured, so the resulting problem $\pi'$ has a null source container, which is case (1).
        Otherwise, if $\lambda=v_1/u_1$, the entire target container is filled and $\pi'$ has a null target container, which is case (2).
        Otherwise, $\lambda = \frac{b_1 v_2 \,-\, v_1 b_2}{a_1 v_2 \,-\, u_1 b_2}$, which is positive by Lemma~\ref{lem:pour_from_first}.
        Computing the new density of the first target container shows that
        \begin{equation*}
            \frac{b'_1}{v'_1} 
            = \frac{b_1 - \lambda a_1}{v_1 - \lambda u_1} 
            = \frac{b_1(a_1 v_2 \,-\, u_1 b_2) - a_1 (b_1 v_2 \,-\, v_1 b_2)}{v_1(a_1 v_2 \,-\, u_1 b_2) - u_1 (b_1 v_2 \,-\, v_1 b_2)}
            = \frac{b_2 (a_1 v_1 - b_1 u_1)}{v_2 (v_1 a_1 - u_1 b_1)}
            = \frac{b_2}{v_2},
        \end{equation*}
        implying that the resulting problem $\pi'$ is case (3).
    \end{claimproof}
\end{proof}

\begin{lem}\label{lem:null_u}
    Consider an instance $\pi=(\overline{a}, \overline{u}, \overline{b}, \overline{v})$ of the fluid mixing problem satisfying the slack condition, 
    where $u_1=a_1=0$.
    Let $\pi'=(\overline{a'}, \overline{u'}, \overline{b}, \overline{v})$ be the problem obtained by deleting the null source container,
    with $\overline{a'} = (a_2,\ldots,a_n)$ and $\overline{u'} = (u_2,\ldots,u_n)$.
    Then $\pi'$ is a valid problem with $\slack_l(\pi')=\slack_l(\pi)$ for $l \in [k]$, 
    and if $\pi'$ is solvable so is $\pi$.
    Given a solution $X'$ for $\pi'$, then $X = \begin{bmatrix}(\delta_1)_{1 \times k} \\[4pt]  X'\end{bmatrix}$ is a solution for $\pi$.
\end{lem}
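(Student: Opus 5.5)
The plan is to check three things directly from the definitions: that $\pi'$ is a valid instance, that its slacks agree with those of $\pi$, and that padding $X'$ with the row $\delta_1$ gives a solution of $\pi$.

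\emph{Validity of $\pi'$.} Since $a_1=u_1=0$, deleting the first source container leaves $\sum_{i\ge2}a_i=\sum_j b_j$ and $\sum_{i\ge2}u_i=\sum_j v_j$. So the conservation equalities \eqref{eq:conservation_Of_volume_and_mass} still hold. The remaining source densities $a_2/u_2\ge\cdots\ge a_n/u_n$ are still in descending order. The target data is unchanged, so its ordering and positivity are inherited. One subtlety is that $u_2$ might also vanish. This is allowed by the remark permitting null containers, and the recursion simply deletes such a container again.

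\emph{Equality of slacks.} Write $U'_{i}=\sum_{i'=2}^{i+1}u_{i'}$ for the partial sums of $\pi'$, indexed so that source $i$ of $\pi'$ is source $i+1$ of $\pi$. Because $u_1=0$, we have $U'_{i-1}=U_i$ for every $i$, and in particular $U'_0=0=U_1$. Hence the quantities $\mu'_{i,l}=\max(0,\min(u_{i+1},V_l-U'_{i-1}))$ coincide with $\mu_{i+1,l}$. The term $i=1$ in $\slack_l(\pi)$ has $\mu_{1,l}=0$ and is regarded as zero by convention. Therefore $\slack_l(\pi')=\slack_l(\pi)$ for each $l$, since the target sums $\sum_{j\le l}b_j$ are identical. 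In particular, $\pi'$ satisfies the slack condition.

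\emph{Lifting the solution.} Given $X'$ solving $\pi'$, set $X$ to be $X'$ with the row $\delta_1=(1,0,\ldots,0)$ prepended. All entries of $X$ are nonnegative. Row $1$ sums to $1$, and the remaining rows sum to $1$ because $X'$ is a solution. In the column constraints \eqref{eq:frac_primal2} and \eqref{eq:frac_primal3}, row $1$ contributes $u_1\cdot x_{1,j}=0$ and $a_1\cdot x_{1,j}=0$. The remaining contributions are exactly those of $X'$ for $\pi'$, which equal $v_j$ and $b_j$. So $X$ solves $\pi$.

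There is no real obstacle. The only point needing care is the index shift in the partial sums $U_i$ versus $U'_i$ together with the convention that the $u_i=0$ summand counts as zero.
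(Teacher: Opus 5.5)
Your proposal is correct and follows essentially the same route as the paper's proof. Like the paper, you use conservation to get validity of $\pi'$. For the slack, you use the index shift $\mu'_{i,l}=\mu_{i+1,l}$ together with $\mu_{1,l}=0$. For the lifted solution, you prepend the row $\delta_1$, whose contribution to the column constraints vanishes because $u_1=a_1=0$. Your handling of the shifted partial sums $U'_{i-1}=U_i$ is more explicit than the paper's.
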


\begin{proof}
    Given $\pi$ with $u_1=a_1=0$, define $\pi'$ as in the lemma.
    Then $\pi'$ is a valid fluid mixing problem, as it satisfies the conservation equality \eqref{eq:conservation_Of_volume_and_mass}.
    If $\pi'$ is solvable with solution $x'_{i,j}$, we define $x_{i,j}$ as
    \[
        x_{i,j} = \begin{cases}
           x'_{i-1,j} & \textrm{ if } i \geq 2 \\
           0          & \textrm{ if } i=1, j>1 \\
           1          & \textrm{ if } i=j=1 
        \end{cases}    
    \]
    To see that $X=[x_{i,j}]_{i \in [n],j \in [k]}$ is a solution for $\pi$, note that \eqref{eq:frac_primal1} is satisfied, as $\sum_{j=1}^k x_{i,j} = \sum_{j=1}^k x'_{i,j} = 1$ for $i \geq 2$, and as $\sum_{j=1}^k x_{1,j} = 1$,
    and furthermore, equations \eqref{eq:frac_primal2} and \eqref{eq:frac_primal3} hold because $u_1=a_1=0$ so the value of $x_{1,j}$ does not affect the sums.
    As for the slack, observe that $\mu_i = \mu_{i-1}$ for $i \geq 2$ and that $\mu_1 = 0$.
    Therefore, $\slack_l(\pi) = \slack_l(\pi')$ for all $l$ as claimed.
\end{proof}

\begin{lem}\label{lem:null_v}
    Consider an instance $\pi=(\overline{a}, \overline{u}, \overline{b}, \overline{v})$ of the fluid mixing problem satisfying the slack condition, 
    where $v_1=b_1=0$.
    Let $\pi'=(\overline{a}, \overline{u}, \overline{b'}, \overline{v'})$ be the problem obtained by deleting the null target container,
    with $\overline{b'} = (b_2,\ldots,b_n)$ and $\overline{v'} = (v_2,\ldots,v_n)$.
    Then $\pi'$ is a valid problem with $\slack_l(\pi')=\slack_{l+1}(\pi)$ for $l \in [k-1]$, 
    and if $\pi'$ is solvable so is $\pi$. 
    Given a solution $X'$ for $\pi'$, then $X = \begin{bmatrix}0_{n \times 1} & X' \end{bmatrix}$ is a solution for $\pi$.
\end{lem}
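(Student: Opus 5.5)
The proof has three parts: check that $\pi'$ is a valid instance, build a solution $X$ of $\pi$ from a solution $X'$ of $\pi'$, and compare the slacks index by index. It parallels the proof of Lemma~\ref{lem:null_u}, with the roles of source and target swapped.

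\emph{Validity.} Since $v_1=b_1=0$, deleting the first target container changes neither $\sum_j b_j$ nor $\sum_j v_j$, so the conservation equalities \eqref{eq:conservation_Of_volume_and_mass} still hold for $\pi'$. The target densities of $\pi'$ are $b_2/v_2\ge\cdots\ge b_k/v_k$, which is a subsequence of the original order. Hence $\pi'$ is a valid fluid mixing instance with $k-1$ target containers. By the remark allowing null containers only in the first position, $v_2>0$ may be assumed, or else the same reduction is simply repeated.

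\emph{Solution.} Given a solution $X'=[x'_{i,j}]$ of $\pi'$, define $x_{i,1}=0$ and $x_{i,j}=x'_{i,j-1}$ for $j\ge2$. All entries are nonnegative. For the row sums, $\sum_{j=1}^k x_{i,j}=\sum_{j=1}^{k-1}x'_{i,j}=1$, which gives \eqref{eq:frac_primal1}. For column $1$, $\sum_i u_i x_{i,1}=0=v_1$ and $\sum_i a_i x_{i,1}=0=b_1$. For columns $j\ge2$, equations \eqref{eq:frac_primal2} and \eqref{eq:frac_primal3} are exactly the corresponding equations of $\pi'$ for column $j-1$. So $X=\begin{bmatrix}0_{n\times1}&X'\end{bmatrix}$ solves $\pi$.

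\emph{Slack.} Write $V'_l$ for the partial volume sums of $\pi'$. Since $v_1=0$, we have $V'_l=\sum_{j=2}^{l+1}v_j=V_{l+1}$. The source data are unchanged, so $\mu'_{i,l}=\max(0,\min(u_i,V'_l-U_{i-1}))=\mu_{i,l+1}$. Likewise $\sum_{j=1}^l b'_j=\sum_{j=2}^{l+1}b_j=\sum_{j=1}^{l+1}b_j$, using $b_1=0$. Substituting into Definition~\ref{def:frac_slack} gives $\slack_l(\pi')=\slack_{l+1}(\pi)$ for every $l$. Consequently $\slack(\pi')=\min_{1\le l\le k-2}\slack_{l+1}(\pi)\ge\slack(\pi)\ge0$, so $\pi'$ satisfies the slack condition as well.

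There is no real obstacle here. The only point needing care is the index bookkeeping at the boundary: when $l=k-1$, $\slack_{k-1}(\pi')$ equals $\slack_k(\pi)$, and both are $0$ by conservation. This is consistent with the ranges over which the minimum defining the slack is taken.
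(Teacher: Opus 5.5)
Your proof is correct and takes essentially the same approach as the paper's. You prepend a zero column to $X'$, verify the three constraint families column by column, and get the slack identity from $V'_l=V_{l+1}$ (so $\mu'_{i,l}=\mu_{i,l+1}$) together with $b_1=0$. Your additional remarks — that the target-density order is preserved, that $\slack(\pi')\ge\slack(\pi)\ge 0$, and that both sides vanish when $l=k-1$ — are consistent with the paper and slightly more explicit than its version.
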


\begin{proof}
    Given $\pi$ with $v_1=b_1=0$, define $\pi'$ as in the lemma.
    Then $\pi'$ is a valid fluid mixing problem as it satisfies the conservation equality \eqref{eq:conservation_Of_volume_and_mass}.
    If $\pi'$ is solvable with solution $x'_{i,j}$, we define $x_{i,j}$ as $x'_{i,j-1}$ if $j \geq 2$ and as zero if $j=1$.
    Then, \eqref{eq:frac_primal1} is satisfied as an added zero does not change a sum. 
    Equations \eqref{eq:frac_primal2} are satisfied for $j \geq 2$ as $\sum_{i=1}^n u_i x_{i,j} = \sum_{i=1}^n u_i x'_{i,j-1} = v'_{j-1} = v_j$
    and as the sum is zero for $j=1$. Similar computation holds for \eqref{eq:frac_primal3}, implying that $\pi$ is solvable.
    As for the slack, $\mu'_{i,l-1} = \mu_{i,l}$ for $l \geq 2$ and $\mu_{i,1}=0$ for all $i$.
    Therefore, $\slack_l(\pi) = \slack_{l-1}(\pi')$ for $l \geq 2$ as claimed.
\end{proof}

\begin{lem}\label{lem:merge_v}
    Consider an instance $\pi=(\overline{a}, \overline{u}, \overline{b}, \overline{v})$ of the fluid mixing problem satisfying the slack condition, 
    with $k \geq 2$ and with strictly positive vectors $\overline{u}, \overline{v}$, where $b_1/v_1=b_2/v_2$.
    Let $\pi'=(\overline{a}, \overline{u}, \overline{b'}, \overline{v'})$ be the problem obtained by merging the two containers,  
    with $\overline{b'} = (b_1+b_2,b_3,\ldots,b_k)$ and $\overline{v'} = (v_1+v_2,v_3,\ldots,v_k)$.
    Then $\pi'$ is a valid problem with $\slack_l(\pi')=\slack_{l+1}(\pi)$ for $l \in [k-1]$, 
    and if $\pi'$ is solvable so is $\pi$.
    Given a solution $X'$ for $\pi'$, then $X = \begin{bmatrix}\gamma X'_{1:n,1} & (1-\gamma)X'_{1:n,1} & X'_{1:n,2:k}\end{bmatrix}$ is a solution for $\pi$, where $\gamma = v_1/(v_1+v_2)$.
\end{lem}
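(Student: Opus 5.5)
We must prove Lemma~\ref{lem:merge_v}. The proof has three parts: validity of $\pi'$, the slack identity $\slack_l(\pi')=\slack_{l+1}(\pi)$, and lifting a solution $X'$ of $\pi'$ to a solution $X$ of $\pi$.

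\textbf{Validity.} Conservation \eqref{eq:conservation_Of_volume_and_mass} is immediate, since both the $b$-sum and the $v$-sum are unchanged. The merged container has density $(b_1+b_2)/(v_1+v_2)=b_1/v_1=b_2/v_2$, because $b_1/v_1=b_2/v_2$. Its density is therefore at least $b_3/v_3$, so the descending order of target densities is preserved. All volumes remain positive.

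\textbf{Slack identity.} Write $V'_l$ for the prefix sums of $\overline{v'}$. Then $V'_l=V_{l+1}$ for $l\ge1$, since the merged first container has volume $v_1+v_2$. The sources are unchanged, so $\mu'_{i,l}=\mu_{i,l+1}$. Likewise $\sum_{j\le l}b'_j=\sum_{j\le l+1}b_j$. Hence $\slack_l(\pi')=\slack_{l+1}(\pi)$ for $l\in[k-1]$, with $\slack_{k-1}(\pi')$ read as the trivially zero total-balance term. In particular $\slack(\pi')\ge\slack(\pi)\ge0$, because the indices of $\pi'$ correspond to $\slack_2,\dots,\slack_k$ of $\pi$, and the slack condition is inherited. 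The dropped $\slack_1(\pi)$ does not matter for this direction.

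\textbf{Lifting.} Define $X=[\gamma X'_{\cdot,1},\,(1-\gamma)X'_{\cdot,1},\,X'_{\cdot,2:k}]$ with $\gamma=v_1/(v_1+v_2)$. Entries are nonnegative. Row sums are preserved, since $\gamma x'_{i,1}+(1-\gamma)x'_{i,1}=x'_{i,1}$, which gives \eqref{eq:frac_primal1}. For columns $j\ge3$, \eqref{eq:frac_primal2} and \eqref{eq:frac_primal3} are inherited from $X'$. For column $1$,
\[
\sum_i u_ix_{i,1}=\gamma(v_1+v_2)=v_1,\qquad \sum_i a_ix_{i,1}=\gamma(b_1+b_2)=\frac{v_1(b_1+b_2)}{v_1+v_2}=b_1.
\]
The last equality uses the equal-density hypothesis: $b_2=b_1v_2/v_1$, so $b_1+b_2=b_1(v_1+v_2)/v_1$. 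Column $2$ is symmetric, with $1-\gamma=v_2/(v_1+v_2)$ giving $v_2$ and $b_2$.

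The only subtle points are the index shift in the slack and using the density hypothesis in the mass computation. Neither is a real obstacle; everything is routine bookkeeping.
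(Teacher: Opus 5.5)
Your proposal is correct and follows essentially the same route as the paper. The paper uses the same column-splitting lift with $\gamma=v_1/(v_1+v_2)$, verified through the equal-density hypothesis, and the same slack identity via $V'_l=V_{l+1}$, hence $\mu'_{i,l}=\mu_{i,l+1}$; your explicit checks that the target-density ordering is preserved and that $\slack(\pi')\ge\slack(\pi)\ge 0$ are left implicit in the paper.
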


\begin{proof}
    Given $\pi$ with $b_1/v_1=b_2/v_2$, define $\pi'$ as in the lemma.
    Then $\pi'$ is a valid fluid mixing problem as it satisfies the conservation equality \eqref{eq:conservation_Of_volume_and_mass}.
    If $\pi'$ is solvable with solution $x'_{i,j}$, we define: 
    \begin{equation*}
        x_{i,j} = \left\{ \begin{tabular}{cc}
             $\frac{v_j}{v_1+v_2} \cdot x'_{i,1}$ &  if $\, j \leq 2$ \\
             $x'_{i,j-1}$                         &  if $\, j \geq 3$ 
        \end{tabular}\right.
    \end{equation*}
    Then $x$ is a solution for $\pi$ satisfying \eqref{eq:frac_primal1}, \eqref{eq:frac_primal2} and \eqref{eq:frac_primal3} since:
    \begin{equation*}\renewcommand{\arraystretch}{1.5}\begin{tabular}{ll}
        $\sum_{j=1}^k x_{i,j} = 
            \left(\frac{v_1}{v_1+v_2} + \frac{v_2}{v_1+v_2}\right) x'_{i,1} + \sum_{j=2}^k x'_{i,j} = 1$   
                                                                        & for all $i \in [n]$,   \\
        $\sum_{i=1}^n u_i x_{i,j}  = 
            \frac{v_j}{v_1+v_2} \sum_{i=1}^n u_i x'_{i,1} = v_j$        & for $j =1,2$, \\
        $\sum_{i=1}^n u_i x_{i,j}  = 
            \sum_{i=1}^n u_i x'_{i,j-1} = v_j$                          & for $j = 3,\ldots,k$, \\
        $\sum_{i=1}^n a_i x_{i,j}  = 
            \frac{v_j}{v_1+v_2} \sum_{i=1}^n a_i x'_{i,j} = \frac{v_j}{v_1+v_2} \cdot (b_1+b_2) = b_j$
                                                                        & for $j =1,2$, \\
        $\sum_{i=1}^n a_i x_{i,j}  = 
            \sum_{i=1}^n a_i x'_{i,j-1} = b_j$                          & for $j = 3,\ldots,k$,
    \end{tabular}\end{equation*}
    where the equality $\frac{v_j}{v_1+v_2} \cdot (b_1+b_2) = b_j$ for $j =1,2$ follows from the assumption that $b_1/v_1 = b_2/v_2$.
    To prove the claim regarding the slack, observe that $V_l = V'_{l-1}$ for any $l \geq 2$. 
    Therefore, for all $i \in [n]$ and $l=2,\ldots k$ we have:
    \begin{equation*}
        \mu_{i,l} = \max(0,\min(u_i,V_l-U_{i-1})) = \max(0,\min(u_i,V'_{l-1}-U_{i-1})) = \mu'_{i,l-1}
    \end{equation*}
    It follows that for all $l=2,\ldots,k$:
    \begin{equation*}
        \slack_l(\pi) 
        = \sum_{i=1}^n \frac{a_i}{u_i} \cdot \mu_{i,l} - \sum_{j=1}^l b_j 
        = \sum_{i=1}^n \frac{a_i}{u_i} \cdot \mu'_{i,l-1} - \sum_{j=1}^{l-1} b'_j = \slack_{l-1}(\pi'),
    \end{equation*}
    as claimed.
\end{proof}

\begin{lem}\label{lem:pour_from_first}
    Consider an instance $\pi=(\overline{a}, \overline{u}, \overline{b}, \overline{v})$ of the fluid mixing problem satisfying the slack condition,
    with $n \ge 1$, $k \ge 2$ and with strictly positive vectors $\overline{u}$ and $\overline{v}$, such that $\frac{b_1}{v_1} > \frac{b_2}{v_2}$. 
    Let $\lambda = \min\!\left( 1,\, \frac{v_1}{u_1},\, \frac{b_1 v_2 \,-\, v_1 b_2}{a_1 v_2 \,-\, u_1 b_2} \right)$,  and let $\pi'=(\overline{a'}, \overline{u'}, \overline{b'}, \overline{v'})$ be the problem obtained by pouring a portion $\lambda$ of the liquid from the first source container into the first target container, where
    \begin{equation*}\begin{tabular}{ll}
        $\overline{a}' = ( a_1 - \lambda a_1, a_2, \ldots, a_n )$, & 
            $\quad \overline{u}' = ( u_1 - \lambda u_1, u_2, \ldots, u_n )$, \\
        $\overline{b}' = ( b_1 - \lambda a_1, b_2, \ldots, b_k )$, &
            $\quad \overline{v}' = ( v_1 - \lambda u_1, v_2, \ldots, v_k )$.
    \end{tabular}\end{equation*}
    
    Then $\lambda>0$, and $\pi'$ is a valid problem with $\slack_l(\pi')=\slack_l(\pi)$ for $l \in [k]$, and if $\pi'$ is solvable so is $\pi$.  
    If $X'$ is a solution of $\pi'$ then 
    \[
        X = \begin{bmatrix} \lambda(\delta_1)_{1 \times k} + (1-\lambda)X'_{1,1:k}\\[4pt] X'_{2:n,1:k} \end{bmatrix}
    \]
    is a solution for $\pi$.
    If $\slack_1(\pi) > 0$, then $\lambda < \frac{v_1}{u_1}$.
\end{lem}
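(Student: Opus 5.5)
The proof splits into five parts: positivity of $\lambda$, validity of $\pi'$, lifting a solution of $\pi'$ back to $\pi$, invariance of the slack, and the final claim $\lambda<v_1/u_1$.

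\emph{Positivity of $\lambda$.} The first two terms of the minimum are clearly positive. For the third term, set $\rho=a_1/u_1$, $\beta_1=b_1/v_1$ and $\beta_2=b_2/v_2$. Then
\[
\frac{b_1v_2-v_1b_2}{a_1v_2-u_1b_2}=\frac{v_1(\beta_1-\beta_2)}{u_1(\rho-\beta_2)}.
\]
The numerator is positive by the hypothesis $\beta_1>\beta_2$. For the denominator, $\slack_1(\pi)\ge0$ means that filling volume $v_1$ with the densest source material gives mass at least $b_1$. Since densities are non-increasing, that mass is at most $\rho v_1$, so $\rho\ge\beta_1>\beta_2$. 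Hence the third term is positive, and $\lambda>0$.

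\emph{Validity of $\pi'$.} Because $\lambda\le1$ and $\lambda\le v_1/u_1$, the new volumes $u_1'$ and $v_1'$ are nonnegative. The same quantity $\lambda a_1$ of mass and $\lambda u_1$ of volume is removed from both sides, so conservation holds. The source density $a_1'/u_1'=\rho$ is unchanged, so the source ordering is preserved. On the target side, the poured material has density $\rho\ge\beta_1$, so removing it can only lower the density of the first target. As a function of $\lambda$, $b_1'/v_1'$ is monotone decreasing and equals $\beta_2$ exactly at the third term of the minimum. Since $\lambda$ is at most that value, $b_1'/v_1'\ge b_2/v_2$ and the target ordering is preserved. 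If $u_1'$ or $v_1'$ becomes $0$, the corresponding mass is also $0$, which the remark permits.

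\emph{Lifting a solution.} Given a solution $X'$ of $\pi'$, take $X$ as stated. Row 1 of $X$ sums to $\lambda+(1-\lambda)=1$. In column 1, the new contributions are $\lambda u_1+(1-\lambda)u_1x'_{1,1}=\lambda u_1+u_1'x'_{1,1}$ for volume, and the analogous expression for mass. These sum to $v_1'+\lambda u_1=v_1$ and $b_1'+\lambda a_1=b_1$. The other columns are unaffected, since $(1-\lambda)u_1=u_1'$ and $(1-\lambda)a_1=a_1'$.

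\emph{Invariance of the slack.} This is the main computational step. Since $U_i'=U_i-\lambda u_1$ for all $i\ge1$ and $V_l'=V_l-\lambda u_1$ for all $l\ge1$, the differences $V_l'-U_{i-1}'$ are unchanged for $i\ge2$. Therefore $\mu'_{i,l}=\mu_{i,l}$ for $i\ge2$. For $i=1$, $U_0=0$ and $V_l\ge v_1\ge\lambda u_1$, so
\[
\mu_{1,l}=\min(u_1,V_l),\qquad \mu'_{1,l}=\min(u_1-\lambda u_1,\,V_l-\lambda u_1)=\mu_{1,l}-\lambda u_1 .
\]
Multiplying by the density $\rho$, the first source loses $\rho\lambda u_1=\lambda a_1$ from its contribution. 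The target side $\sum_{j\le l}b_j$ loses exactly $\lambda a_1$ as well. Hence $\slack_l(\pi')=\slack_l(\pi)$. When $u_1'=0$ the convention that the summand is zero matches, because $\mu'_{1,l}=0$ and the formula still holds.

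\emph{The final claim.} Suppose $\lambda=v_1/u_1$. Then $v_1'=0$, so $\slack_1(\pi')=\rho\,\mu'_{1,1}-b_1'$ with $\mu'_{1,1}=0$ and $b_1'=b_1-\rho v_1$. Thus $\slack_1(\pi')=\rho v_1-b_1$. On the other hand, $\slack_1(\pi)=\rho\min(u_1,v_1)-b_1$, and $v_1\le u_1$ in this case, so $\slack_1(\pi)=\rho v_1-b_1$ as well. The key point is that $\lambda=v_1/u_1$ requires
\[
\frac{v_1}{u_1}\le\frac{v_1(\beta_1-\beta_2)}{u_1(\rho-\beta_2)},
\]
that is, $\rho-\beta_2\le\beta_1-\beta_2$, so $\rho\le\beta_1$. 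Combined with $\rho\ge\beta_1$ from the first step, this forces $\rho=\beta_1$. Then $\slack_1(\pi)=\rho v_1-b_1=0$. So if $\slack_1(\pi)>0$, the case $\lambda=v_1/u_1$ cannot occur, and $\lambda<v_1/u_1$.
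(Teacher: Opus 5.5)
Your proof is correct and follows essentially the same route as the paper's: the bound $a_1/u_1\ge b_1/v_1$ from $\slack_1(\pi)\ge 0$, monotonicity of $b_1'/v_1'$ in $\lambda$, the shift $\mu'_{1,l}=\mu_{1,l}-\lambda u_1$ with $\mu'_{i,l}=\mu_{i,l}$ for $i\ge 2$, and the observation that $\lambda=v_1/u_1$ forces $a_1/u_1=b_1/v_1$. Your single lifting computation via $(1-\lambda)u_1=u_1'$ is just a tidier version of the paper's four-case check. One ordering fix: in the validity step you assert that $v_1'=0$ implies $b_1'=0$ (which is also what gives $b_1'\ge 0$ in that case) before proving it. The justification, namely $\rho=\beta_1$ and hence $b_1'=b_1-\rho v_1=0$, only appears in your final paragraph and should be invoked there.
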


\begin{proof}
    The proof is established as a sequence of claims.
    \begin{claim}\label{claim:pour1}
        $\frac{a_1}{u_1} \ge \frac{b_1}{v_1}$
    \end{claim}
    \begin{claimproof}
        \begin{equation}\label{eq:positive_slack1}
            0 \le \slack(\pi) \le \slack_1(\pi) = \sum_{i=1}^n \frac{a_i}{u_i} \mu_{i,1} - b_1 \le \frac{a_1}{u_1} \sum_{i=1}^n \mu_{i,1} - b_1 = \frac{a_1}{u_1} v_1 - b_1.            
        \end{equation}
    \end{claimproof}

    \noindent
    Denote $\lambda_0=\frac{b_1 v_2 \,-\, v_1 b_2}{a_1 v_2 \,-\, u_1 b_2}$.

    \begin{claim}\label{claim:pour2}
     $\lambda_0>0$, and thus, $\lambda>0$.
    \end{claim}
    \begin{claimproof}
        In the expression for $\lambda_0$, both the numerator and the denominator are strictly positive, as by Claim~\ref{claim:pour1} and the assumption of the lemma, we have $\frac{a_1}{u_1} \ge \frac{b_1}{v_1} > \frac{b_2}{v_2}$. Since $\lambda = \min\!\left( 1,\, \frac{v_1}{u_1},\, \lambda_0 \right)$, the result follows.
    \end{claimproof}

    \begin{claim}\label{claim:pour_valid1}
        $a'_1, u'_1, b'_1, v'_1 \ge 0$
    \end{claim}
    \begin{claimproof}
        The non-negativity of $a'_1$ and $u'_1$ follows from the fact that $\lambda \le 1$, and the non-negativity of $v'_1$ follows from $\lambda \le \frac{v_1}{u_1}$.
        In order to prove that $b'_1 \ge 0$, we use the fact that $\lambda \le \lambda_0$ to obtain that
        \[
            b'_1 = b_1 - \lambda a_1 \ge b_1 - \lambda_0 a_1 
                 = b_1 - \frac{b_1 v_2 \,-\, v_1 b_2}{a_1 v_2 \,-\, u_1 b_2} a_1
                 = \frac{a_1 v_1 - u_1 b_1}{a_1 v_2 \,-\, u_1 b_2} b_2 \ge 0,
        \]
    where the last inequality follows from Claim~\ref{claim:pour1} and the assumption of the lemma.
    \end{claimproof}
    \begin{claim}\label{claim:pour_valid2}
        If $u'_1=0$ then $a'_1=0$; if $v'_1=0$ then $b'_1=0$.
    \end{claim}
    \begin{claimproof}
        If $u'_1=0$ then $\lambda = 1$, and therefore $a'_1=0$ as well.
        If $v'_1=0$ then $\lambda = \frac{v_1}{u_1}$. 
        Since also we have, $\lambda \le\lambda_0=\frac{b_1 v_2 \,-\, v_1 b_2}{a_1 v_2 \,-\, u_1 b_2}$, it follows that
        \[
            v_1 (a_1 v_2 - u_1 b_2) \le u_1(b_1 v_2 - v_1 b_2).
        \]
        Simplifying, yields that this requirement is equivalent to $\frac{a_1}{u_1} \le \frac{b_1}{v_1}$, which by Claim~\ref{claim:pour1} implies that $\frac{a_1}{u_1} = \frac{b_1}{v_1}$.
        Therefore,
        \[
            b'_1 = b_1 - \lambda a_1 = b_1 - \frac{v_1}{u_1} a_1 = \frac{b_1 u_1 - a_1 v_1}{u_1} = 0.
        \]
    \end{claimproof}

    \begin{claim}\label{claim:pour_valid3}
        $\frac{b'_1}{v'_1} \ge \frac{b_2}{v_2}$
    \end{claim}
    \begin{claimproof}
        As by Claim~\ref{claim:pour1}, $\frac{a_1}{u_1} \ge \frac{b_1}{v_1}$ we conclude that $\frac{b'_1}{v'_1} = \frac{b_1 - \lambda a_1}{v_1 - \lambda u_1}$ is monotone non-increasing with $\lambda$. In conjunction with, $\lambda \le \lambda_0 = \frac{b_1 v_2 \,-\, v_1 b_2}{a_1 v_2 \,-\, u_1 b_2}$, it follows that:
        \[
            \frac{b'_1}{v'_1} 
            = \frac{b_1 - \lambda a_1}{v_1 - \lambda u_1} 
            \ge \frac{b_1 - \lambda_0 a_1}{v_1 - \lambda_0 u_1}
            = \frac{b_1 (a_1 v_2 - u_1 b_2) - a_1 (b_1 v_2 - v_1 b_2)}{v_1 (a_1 v_2 - u_1 b_2) - u_1 (b_1 v_2 - v_1 b_2)}
            = \frac{(a_1 v_1 - b_1 u_1) b_2}{(v_1 a_1 - u_1 b_1) v_2}
            = \frac{b_2}{v_2}.
        \]
    \end{claimproof}
    
    \begin{claim}\label{claim:pour_valid}
        The problem $\pi'$ is valid.
    \end{claim}
    \begin{claimproof}
        By definition, it satisfies the conservation equality \eqref{eq:conservation_Of_volume_and_mass}. 
        The vectors $\overline{a'}, \overline{u'}, \overline{b'}, \overline{v'}$ are non-negative, by the non-negativity of $\overline{a}, \overline{u}, \overline{b}, \overline{v}$ and Claim~\ref{claim:pour_valid1}.
        Also, a zero volume source or target container implies zero mass by Claim~\ref{claim:pour_valid2}.
        Finally, the source and target container ordering of $\pi'$ is valid. This follows from the order validity in $\pi$ and the fact that unless the first source container becomes empty, its density does not change.
        As for the target containers, if the first target container does not become full, the claim follows by Claim~\ref{claim:pour_valid3} and by the ordering in $\pi$.
    \end{claimproof}
        
    \begin{claim}\label{claim:pour6}
        If $\slack_1(\pi) > 0$, then $\lambda < \frac{v_1}{u_1}$.
    \end{claim}    
    \begin{claimproof}
        By \eqref{eq:positive_slack1}, $\slack_1(\pi) > 0$ implies $\frac{a_1}{u_1} > \frac{b_1}{v_1}$.
        On the other hand, if $\lambda = \frac{v_1}{u_1}$, by the argument of Claim~\ref{claim:pour_valid2}, implies that $\frac{a_1}{u_1} \le \frac{b_1}{v_1}$, which is a contradiction.
    \end{claimproof}
        
    Next, we prove that given the solution $X'$ for $\pi'$ implies that $X = [x_{i,j}]_{i \in [n], j ]in [k]}$ is a solution for $\pi$:
    \begin{equation*}
        x_{i,j} = \begin{cases}
            x'_{i,j}                       & \textrm{if }i > 1 \\
            (1-\lambda)x'_{1,j}            & \textrm{if }i = 1 \textrm{ and }j > 1 \\
            (1-\lambda)x'_{1,j} + \lambda  & \textrm{if }i = j = 1
        \end{cases}
    \end{equation*} 
    This we do by checking that the conditions of Definition~\ref{def:fluid_mixing} are met by $X$, where the non-negativity of $X$ immediately follows from the non-negativity of $X'$ and $\lambda \le 1$.
    We prove that constraints \eqref{eq:frac_primal1}, \eqref{eq:frac_primal2} and \eqref{eq:frac_primal3} hold in the next three claims.

    \begin{claim}\label{claim:pourX1}
        condition \eqref{eq:frac_primal1} holds for $X$.
    \end{claim}
    \begin{claimproof}
        The constraint holds for $i>1$ as $x_{i,j}=x'_{i,j}$ for all $j$.
        Otherwise, $i=1$ and then
        \begin{equation*}
            \sum_{j=1}^k x_{1,j} = \lambda + (1-\lambda)\sum_{j=1}^k x'_{1,j} = \lambda + (1-\lambda) = 1.
        \end{equation*}
    \end{claimproof}
    
    \begin{claim}\label{claim:pourX2}
        condition \eqref{eq:frac_primal2} holds for $X$.
    \end{claim}
    \begin{claimproof}
        The proof proceeds by cases.
        If $\lambda<1$ and $j>1$, then
        \begin{align*}
            \sum_{i=1}^n u_i x_{i,j} 
               = \frac{1}{1-\lambda} u'_1 \cdot (1-\lambda)x'_{1,j}\ + \sum_{i=2}^n u'_i x'_{i,j} =\sum_{i=1}^n u'_i x'_{i,j} = v'_j = v_j.
        \end{align*}
        Else, if $\lambda<1$ and $j=1$, then
        \begin{align*}
            \sum_{i=1}^n u_i x_{i,1} 
               = u_1 \cdot \left((1-\lambda)x'_{1,1} + \lambda\,\right) + \sum_{i=2}^n u'_i x'_{i,1} 
                = \lambda u_1 + \sum_{i=1}^n u'_i x'_{i,1} = \lambda u_1 + v'_1 = v_1.
        \end{align*}
        Else, if $\lambda=1$ and $j>1$, then $u'_1=0$ and we have
        \begin{align*}
            \sum_{i=1}^n u_i x_{i,j}= \sum_{i=2}^n u'_i x'_{i,j} = \sum_{i=1}^n u'_i x'_{i,j} = v'_j = v_j.
       \end{align*}
       Else, we have $\lambda=1$ and $j=1$, and then
        \begin{align*}
            \sum_{i=1}^n u_i x_{i,1}
               = u_1 \cdot 1 + \sum_{i=2}^n u'_i x'_{i,1} = \lambda u_1 + v'_1 = v_1.
       \end{align*}
    \end{claimproof}
    
    \begin{claim}\label{claim:pourX3}
        condition \eqref{eq:frac_primal3} holds for $X$.
    \end{claim}
    \begin{claimproof}
        The proof proceeds by cases.
        If $\lambda<1$ and $j>1$, then
        \begin{align*}
            \sum_{i=1}^n a_i x_{i,j} = \frac{1}{1-\lambda} a'_1 \cdot (1-\lambda)x'_{1,j} + \sum_{i=2}^n a'_i x'_{i,j} = \sum_{i=1}^n a'_i x'_{i,j} = b'_j = b_j.
        \end{align*}
        Else, if $\lambda<1$ and $j=1$, then
        \begin{align*}
            \sum_{i=1}^n a_i x_{i,1} 
               = a_1 \cdot \left((1-\lambda)x'_{1,1} + \lambda \right) + \sum_{i=2}^n a'_i x'_{i,1} 
               = \lambda a_1 + \sum_{i=1}^n a'_i x'_{i,1} = \lambda a_1 + b'_1 = b_1.
        \end{align*}
        Else, if $\lambda=1$ and $j>1$, then $a'_1=0$ and we have
        \begin{align*}
            \sum_{i=1}^n a_i x_{i,j} = \sum_{i=2}^n a'_i x'_{i,j} = \sum_{i=1}^n a'_i x'_{i,j} = b'_j = b_j.
       \end{align*}
        Else, we have $\lambda=1$ and $j=1$, and then
        \begin{align*}
            \sum_{i=1}^n a_i x_{i,1} = a_1 \cdot 1 + \sum_{i=2}^n a'_i x'_{i,1} = \lambda a_1 + b'_1 = b_1.
       \end{align*}
    \end{claimproof}

    \begin{claim}
            With $\mu_{i,l}$,  $\mu'_{i,l}$ given by Definition~\ref{def:frac_slack} for $i \in [n]$ and $l \in [k]$, we have
           \[
                \mu'_{i,l} = \begin{cases}
                    \mu_{i,l} - \lambda u_1 & \text{ if } i=1 \\
                    \mu_{i,l}               & \text{ if } i>1
                \end{cases}
            \]
    \end{claim}
    \begin{claimproof}
       If $i=1$:
       \begin{align*}
            \mu'_{1,l} 
                &= \max(0,\min(u'_1,V'_l-U'_0))
                = \max(0,\min(u_1-\lambda u_1,V_l-\lambda u_1))\\
                &= \max\left(0, \min(u_1,V_l) - \lambda u_1\right)
                 = \min(u_1,V_l) - \lambda u_1 \\
                 &= \max( 0, \min(u_1,V_l-U_0) ) - \lambda u_1 
                = \mu_{1,l} - \lambda u_1,
       \end{align*}
       where the inequality $\min(u_1,V_l-U_0) \geq \lambda u_1$ holds as $\lambda \leq \frac{v_1}{u_1} \leq \frac{V_l}{u_1}$.
       Otherwise, $i>1$, and then
       \begin{align*}
            \mu'_{i,l} 
                &= \max(0,\min(u'_i,V'_l-U'_{i-1}))
                = \max\!\big(0,\min(u_i,(V_l-\lambda u_1)-(U_{i-1}-\lambda u_1))\big)\\
                &= \max(0,\min(u_i,V_l-U_{i-1}))
                = \mu_{i,l}.
       \end{align*}
    \end{claimproof}
    
    \begin{claim}
        $\slack(\pi') = \slack(\pi)$
    \end{claim}
    \begin{claimproof}
       If $\lambda<1$, then
       \begin{align*}
            \slack_l(\pi')
                &= \sum_{i=1}^n \frac{a'_i}{u'_i} \cdot \mu'_{i,l} - \sum_{j=1}^l b'_j 
                 = \frac{a_1(1-\lambda)}{u_1(1-\lambda)} \cdot (\mu_{1,l}-\lambda u_1) + \sum_{i=2}^n \frac{a_i}{u_i} \cdot \mu_{i,l} - (b_1 - \lambda a_1) - \sum_{j=2}^l b_j \\
                &= -\frac{a_1}{u_1} \cdot \lambda u_1 + \sum_{i=1}^n \frac{a_i}{u_i} \cdot \mu_{i,l} - \sum_{j=1}^l b_j + \lambda a_1
                = \slack_l(\pi).
       \end{align*}
       Otherwise, $\lambda=1$. We note that $u'_1=\mu'_{1,l}=0$ and that the term $\frac{a'_1}{u'_1} \mu'_{1,l}$ in $\slack_l(\pi')$ can be ignored. 
       Therefore:
       \begin{align*}
            \slack_l(\pi')
                = \sum_{i=2}^n \frac{a'_i}{u'_i} \cdot \mu'_{i,l} - \sum_{j=1}^l b'_j
                = \sum_{i=2}^n \frac{a_i}{u_i} \cdot \mu_{i,l} - \sum_{j=1}^l b_j + a_1.
       \end{align*}
       On the other hand, $1 = \lambda \leq \frac{v_1}{u_1}$, implying that $V_l \geq v_1 \geq u_1$.
       It follows that $\mu_{1,l} = \max(0,\min(u_1,V_l-U_0)) = u_1$ and therefore
       \begin{align*}
            \slack_l(\pi)
                = \sum_{i=1}^n \frac{a_i}{u_i} \cdot \mu_{i,l} - \sum_{j=1}^l b_j
                = a_1 + \sum_{i=2}^n \frac{a_i}{u_i} \cdot \mu_{i,l} - \sum_{j=1}^l b_j = \slack_l(\pi').
       \end{align*}        
    \end{claimproof}
    This concludes the proof of of Lemma~\ref{lem:pour_from_first}.
\end{proof}

The following lemma highlight details of the solution constructed in the proof of \Cref{thm:slack_implies_fluid_mixing}, which will be useful in the next section. 

\begin{lem}\label{lem:fluid_mixing_solution_structure}
    Let $\pi$ be an instance of the fluid mixing problem with $\slack(\pi) > 0$.
    Let $X = [x_{i,j}]_{i \in [n],j \in [k]}$ be the solution constructed by Algorithm~\ref{alg:fluid_mixing_solution_rec} in the proof of Theorem~\ref{thm:slack_implies_fluid_mixing}. 
    We denote by $\gamma_0,\ldots,\gamma_{k-1}$ the sequence of values assigned to the variable $\gamma$ in line~\ref{algline:fluid_mixing_solution_rec:merge_gamma} of \Cref{alg:fluid_mixing_solution_rec},
    indexed by their order of execution, except for the dummy variable $\gamma_0=0$. 
    Specifically, $\gamma_j$ for $j>0$ corresponds to the iteration where the subproblem $\pi'$ contains exactly $k-j$ target containers.
    Also, we define $I_j = \min \{i\in [n] : x_{i,j}>0\}$ for all $j \in [k]$, and $I_{k+1} = n+1$. 
    Then
    \begin{enumerate}
        \item $I_1=1$ and $I_j$ is monotone non-decreasing in $j$,
        \item $x_{i,j}>0$ for all $j \in [k]$ and $i \geq I_j$,
        \item For $l \in [k]$ and $i$ such that $I_l < i < I_{l+1}$, we have $x_{i,j} = \begin{cases}
            (1-\gamma_{j-1}) \prod_{j'=j}^{l-1} \gamma_{j'}& 1 \le j \le l \\
            0                                              & l < j \le k
        \end{cases}$.
    \end{enumerate}
\end{lem}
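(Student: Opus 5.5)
Since the slack is strictly positive, I would track an execution of Algorithm~\ref{alg:fluid_mixing_solution_rec} on $\pi$ and argue by induction on the number of reduction steps. The invariant I would maintain is that the current first target container is a merge of original targets $1,\ldots,l$ for some $l$. At each moment the algorithm pours from the current first source (the smallest unexhausted original index $i$) into this merged container. I would first record the effect of each step type on the final matrix $X$. A null-source step corresponds to moving on to source $i+1$. A null-target step cannot occur while $\slack>0$: by Lemmas~\ref{lem:null_u}, \ref{lem:merge_v} and \ref{lem:pour_from_first}, the slacks of the reduced problem are a subset of the original slacks (shifted indices), so they stay positive. Then Claim~\ref{claim:pour6} gives $\lambda<v_1/u_1$, so the first target never fills up except in the final $k=1$ base case. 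A merge step combines the current merged container (targets $1..l$) with target $l+1$, recording $\gamma_l$ as the volume fraction of the old part.

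Next, I would unwind the merges to describe $X$. When source $i$ pours an amount into the merged container $1..l$ and no further merges occur, the column $1$ of the subproblem is later split recursively. At the merge with $\gamma_{l}$, a fraction $\gamma_{l}$ goes to the block $1..l$ and $1-\gamma_{l}$ goes to target $l+1$. Iterating backward through the merges yields the weights $(1-\gamma_{j-1})\prod_{j'=j}^{l-1}\gamma_{j'}$ for $j\le l$, with $\gamma_0=0$ making the weight of target $1$ equal to $\prod_{j'=1}^{l-1}\gamma_{j'}$. One point needs care here: the merge step uses the solution $X'$ of the merged problem, which includes all later pours into the merged container. So the split fractions apply uniformly to everything poured into the merged column, both before and after the merge. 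More precisely, the contribution $x_{i,j}$ for a source poured entirely while the container was $1..l$ is the fraction routed to the merged column, times the product of the split factors from merges $l,l+1,\ldots$ that occur after it.

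I then need to handle sources that receive fluid after later merges. The cleanest formulation is this: the final column $1$ after all $k-1$ merges contains every source, and $x_{i,j}$ is the fraction of source $i$ in the merged container times the splitting weight. I would identify $I_{l+1}$ with the source index being poured when the $l$-th merge happens (merges are triggered inside a pour of case (4) with $\lambda=\lambda_0$). Under this identification, a source $i$ with $I_l<i<I_{l+1}$ is poured wholly (with $\lambda=1$) into the container $1..l$. The statement's formula then reads as the fraction of that source which ends up in target $j$, taken relative to the merge tree truncated at $l$. I expect reconciling this with the later merges to be the main obstacle. My resolution would be that later merges attach new targets with fraction $\gamma$ for the old block. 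Since the later target ($l+1$ onward) receives only its own mass, by the volume bookkeeping in Lemma~\ref{lem:merge_v} those sources are not reweighted. I would verify this carefully against the definition of $X$ on line~\ref{algline:fluid_mixing_solution_rec:merge_v2}, and adjust the indexing convention of the $\gamma$'s if necessary.

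Finally, claims (1) and (2) follow from the same picture. $I_1=1$ because source $1$ pours into target $1$ first (since $\lambda>0$ by Lemma~\ref{lem:pour_from_first}). Target $j$ first receives fluid at the merge joining it, which happens while pouring source $I_j$, and merges occur in increasing $j$ as sources advance, so $I_j$ is non-decreasing. Every later source pours into a merged container containing $j$ with a strictly positive split weight: the $\gamma$'s lie in $(0,1)$ because all volumes stay positive when $\slack>0$. Hence $x_{i,j}>0$ for all $i\ge I_j$.
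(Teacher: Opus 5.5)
Your arguments for (1) and (2), and for ruling out the null-target step, are fine. Statement (3) is not established, though, and the step you flag as the main obstacle rests on a wrong picture of how the recursion assembles $X$.

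When source $i$ is exhausted, the next call deletes it by the null-source step and fixes its row as $\delta_1$ at that recursion depth (Lemma~\ref{lem:null_u}). Every merge that occurs later in the execution lies deeper in the recursion. Line~\ref{algline:fluid_mixing_solution_rec:merge_v2} therefore acts only on the rows of a subproblem that no longer contains row $i$. It is the \emph{earlier} merges, sitting at shallower levels, that split the first column of this row on the way back up.

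Several of your claims are therefore false, and they contradict the formula to be proved, which for $I_l<i<I_{l+1}$ involves only $\gamma_1,\ldots,\gamma_{l-1}$:
\begin{itemize}
\item that the split fractions apply to what was poured ``both before and after the merge'';
\item that ``the final column $1$ after all $k-1$ merges contains every source'';
\item that $x_{i,j}$ carries ``the split factors from merges $l,l+1,\ldots$ that occur after it''.
\end{itemize}
Your proposed repair does not work either. The volume bookkeeping of Lemma~\ref{lem:merge_v} is irrelevant here: a row present in $X'$ with $x'_{i,1}=1$ would be turned into $(\gamma,1-\gamma,0,\ldots,0)$ by the merge regardless of the volumes. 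The only reason such a row is not reweighted is that it is absent from the merged subproblem. Leaving the indexing of the $\gamma$'s to be ``adjusted if necessary'' means (3) remains unproved.

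The paper avoids this global bookkeeping with an induction on $n+k$. It checks that (1)--(3) are preserved by each local transformation from $X'$ to $X$:
\begin{itemize}
\item The null-source step prepends $[1,0,\ldots,0]$. The $\gamma$'s are unchanged, and $I_j=I'_j+1$ for $j>1$.
\item The pour step changes only the first row, to $\lambda\delta_1+(1-\lambda)X'_{1,\cdot}$. The $I$'s and $\gamma$'s are unchanged.
\item The merge step gives $I_1=I_2=I'_1$, $I_j=I'_{j-1}$ for $j>2$, $\gamma_1=\gamma$, and $\gamma_j=\gamma'_{j-1}$. So for $l\ge 2$ and $I_l<i<I_{l+1}$ you have $I'_{l-1}<i<I'_l$. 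Apply the inductive formula to $x'_{i,\cdot}$, split column $1$ into $\gamma_1x'_{i,1}$ and $(1-\gamma_1)x'_{i,1}$, and shift the other columns; this gives exactly $(1-\gamma_{j-1})\prod_{j'=j}^{l-1}\gamma_{j'}$.
\end{itemize}
This local verification automatically encodes the fact your argument misses: rows removed before a merge are never touched by it.
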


Note that statement (3) gives us information only on rows $i$ for which $i \neq I_j$ for all $j$, so the number of excluded rows is at most $k$.
Therefore, if $k \ll n$, as will be the case in the next section, this lemma gives information on almost all rows of the matrix $X$.
Also note that (3) implies that $x_{\cdot,j}$ is constant in blocks. 
That is, $x_{i+1,j} = x_{i,j}$ for all $j \in [k]$ and $i \in [n-1]$ such that $i, i+1 \not \in \{I_2,I_3,\ldots,I_k\}$.

\begin{proof}
    We prove the claim by induction on $n+k$, where the base case of $k=1$ trivially holds.
    Given a problem $\pi$ with $k>1$ and $\slack(\pi) > 0$, 
    Theorem~\ref{thm:slack_implies_fluid_mixing} proceeds by applying one of the four lemmas \ref{lem:null_u}, \ref{lem:null_v}, \ref{lem:merge_v} and \ref{lem:pour_from_first} to obtain a smaller problem $\pi'$, 
    where each of the four lemmas, guaranties that a positive $\slack(\pi)$ implies a positive $\slack(\pi')$.
    The positive slack implies, by Lemma~\ref{lem:pour_from_first}, that no empty target container is ever formed, so Lemma~\ref{lem:null_v} is never used.

    We show that conditions (1) through (3) of the proposition hold for each of the three transformations from a solution $X'$ for $\pi'$ into a solution $X$ for $\pi$, where we let $I'$ and $\gamma'$ denote the $I$ and $\gamma$ vectors of  $X'$. Note that condition (3) includes the case $l=i=1$ if $I_2>1$.
    \begin{itemize}
        \item
            The transformation of Lemma~\ref{lem:null_u} for handling a null source container is to prepend the row $[1, 0, \cdots, 0]$ to the matrix $X'$. Then $\gamma=\gamma'$, $I_1=1$ and $I_j=I'_j+1$ for $j > 1$, so all three conditions hold. 
        \item
            The transformation of Lemma~\ref{lem:merge_v} for merging two target containers of the same density, for some value $0 < \gamma < 1$,
            is $X = \begin{bmatrix}\gamma X'_{1:n,1} & (1-\gamma)X'_{1:n,1} & X'_{1:n,2:k}\end{bmatrix}$.
            Then, $I_1=I_2=I'_1$ and $I_j=I'_{j-1}$ for $j>2$; and $\gamma_0=0$, $\gamma_1=\gamma$, and $\gamma_j = \gamma'_{j-1}$ for $j \ge 2$, 
            so all three conditions hold.
        \item 
            The transformation of Lemma~\ref{lem:pour_from_first} for pouring portion $0 < \lambda \le 1$ from the first source container to the first target container is to replace the first row by
            $(1-\lambda) (x'_{1, \cdot}) + \lambda[1, 0, \cdots, 0]$.
            Then both $I$ and $\gamma$ vectors stay the same, and the claim follows.
    \end{itemize}
\end{proof}

The next definition and lemma show that for a certain class of problems, that we call {\em $\epsilon$-robust}, all entries of the solution $X$ constructed by \Cref{alg:fluid_mixing_solution_rec}, from row $I_k+1$, are uniformly bounded away from zero by some constant $\delta$ that is only a function of $\epsilon$ and $k$, but is otherwise independent of the problem $\pi$. This property will enable us in the next section to derive an algorithm for solving a class of robust partition problems, which we call {\em linear}.

\begin{defn}\label{def:robust_fluid_mixing}
    Given a fluid mixing problem $\pi=(\overline{a}, \overline{u}, \overline{b}, \overline{v})$, we say that it is $\epsilon$-robust if the following four conditions hold, where the total volume is $V=\sum_{j=1}^k v_j$ and the total mass is $M=\sum_{j=1}^k b_j$:
    \begin{enumerate}
        \item $\slack(\pi) > \epsilon M$,
        \item $b_j > \epsilon M$ for $j \in [k]$,
        \item $v_j > \epsilon V$ for $j \in [k]$,
        \item $\frac{M}{V} > \epsilon \frac{a_1}{u_1}$.
    \end{enumerate}
\end{defn}

\begin{lem}\label{lem:non_vanishing_last_block}
    Let $\pi=(\overline{a}, \overline{u}, \overline{b}, \overline{v})$ be an $\epsilon$-robust fluid mixing problem, for some $\epsilon>0$.
    Then there is a constant $\delta>0$, depending only on $\epsilon$ and $k$, so that the solution $X$ constructed by Algorithm~\ref{alg:fluid_mixing_solution_rec} satisfies
    \[
        x_{ij} > \delta \text{ for all } j \in [k] \text{ and } i > I_k. 
    \]
\end{lem}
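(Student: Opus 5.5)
By part (3) of Lemma~\ref{lem:fluid_mixing_solution_structure} with $l=k$, every row $i$ with $I_k<i<I_{k+1}=n+1$ satisfies
\[
x_{i,j}=(1-\gamma_{j-1})\prod_{j'=j}^{k-1}\gamma_{j'} .
\]
So it suffices to show that every merge coefficient $\gamma_1,\ldots,\gamma_{k-1}$ lies in an interval $[c_1(\epsilon),\,c_2(\epsilon)]\subset(0,1)$. Since $\gamma_0=0$, the factor $1-\gamma_0$ equals $1$. It would then follow that
\[
x_{i,j}\ \ge\ \min\bigl(1,\,1-c_2\bigr)\, c_1^{\,k-1}=:\delta ,
\]
which depends only on $\epsilon$ and $k$. (To get the strict inequality, shrink $\delta$ slightly.)

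\textbf{Setting up the merge step.} Each $\gamma_j$ ($j\ge1$) is produced at a merge step of Algorithm~\ref{alg:fluid_mixing_solution_rec}, applied to a subproblem $\pi''$ with $k-j+1\ge2$ target containers. There $\gamma_j=v''_1/(v''_1+v''_2)$. The second target container of $\pi''$ has never been touched, because pouring and merging only affect the first target container. Hence $v''_2=v_{j+1}$ is an original volume, and $v''_2>\epsilon V$ by robustness condition (3). Also $v''_1\le V$, since volumes only decrease or get merged from original ones. Together these give
\[
\gamma_j\le \frac{V}{V+\epsilon V}=\frac{1}{1+\epsilon}=:c_2 .
\]

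\textbf{Main step: a lower bound on the residual volume $v''_1$.} This is the step I expect to require the most care. By Lemmas~\ref{lem:null_u}, \ref{lem:merge_v} and \ref{lem:pour_from_first}, each reduction preserves the slack values up to an index shift (Lemma~\ref{lem:null_v} never occurs when the slack is positive). Since $\pi''$ still has at least two targets, $\slack_1(\pi'')$ equals some $\slack_l(\pi)$ with $1\le l\le k-1$. By robustness condition (1), this gives $\slack_1(\pi'')>\epsilon M$, where $M$ is the original total mass.

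Next apply the chain \eqref{eq:positive_slack1} from Claim~\ref{claim:pour1} to $\pi''$:
\[
\epsilon M<\slack_1(\pi'')\le \frac{a''_1}{u''_1}\,v''_1-b''_1\le \frac{a''_1}{u''_1}\,v''_1 .
\]
The densest remaining source density $a''_1/u''_1$ is at most $a_1/u_1$, because source densities are never changed and are ordered decreasingly. Combining this with robustness condition (4), $a_1/u_1<M/(\epsilon V)$, yields
\[
v''_1>\epsilon M\,\frac{u_1}{a_1}>\epsilon^2 V .
\]
Consequently
\[
\gamma_j\ \ge\ \frac{\epsilon^2 V}{\epsilon^2V+V}=\frac{\epsilon^2}{1+\epsilon^2}=:c_1 .
\]

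\textbf{Conclusion.} With $c_1,c_2$ as above, the displayed product formula gives the uniform lower bound $\delta=\frac{\epsilon}{1+\epsilon}\bigl(\frac{\epsilon^2}{1+\epsilon^2}\bigr)^{k-1}$ (slightly shrunk for strictness) for all $j\in[k]$ and $i>I_k$.

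Two details need checking in the write-up. First, the indexing of the $\gamma$'s: I need that the second container at the $j$-th merge is indeed the original container $j+1$. Second, I need that the $M$ in condition (1) is the original total mass, not the mass of the current subproblem. Condition (2) appears unnecessary for this argument.
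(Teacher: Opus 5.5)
Your proof is correct. It rests on the same two estimates as the paper: $\gamma_j \le V/(V+v_{j+1}) \le 1/(1+\epsilon)$, and at merge time the residual volume of the first target exceeds $\epsilon^2 V$, because the first slack is at most the top source density times that volume and condition (4) bounds that density. What differs is how you organize the argument.

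The paper argues by induction on $k$. It bounds only $\gamma_1$, locating the merge point through the greedy mass function $f$ and the density function $g$ (with a separate monotonicity claim for $g$). It then verifies that the merged problem $\pi'$ is again $\epsilon'$-robust, so the induction hypothesis handles $\gamma_2,\ldots,\gamma_{k-1}$.

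You instead use two invariants of Algorithm~\ref{alg:fluid_mixing_solution_rec}:
\begin{enumerate}
\item By Lemmas~\ref{lem:null_u}, \ref{lem:merge_v} and \ref{lem:pour_from_first}, slack values are preserved up to an index shift, so $\slack_1(\pi'')=\slack_j(\pi)$ at the $j$-th merge.
\item Targets $j+1,\ldots,k$ are untouched before the $j$-th merge.
\end{enumerate}
You then apply the chain \eqref{eq:positive_slack1} to the current subproblem. That chain is legitimate there because the null-source case has priority over merging, so $u''_1>0$ at every merge step. This also settles the two details you flagged: null targets never arise, since every subproblem with at least two targets has $\slack_1>0$.

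Your route gives an explicit $\delta=\frac{\epsilon}{1+\epsilon}\bigl(\frac{\epsilon^2}{1+\epsilon^2}\bigr)^{k-1}$, with no degradation of $\epsilon$ from level to level. It also avoids the paper's robustness-transfer step, where the constant does degrade: the paper's check of condition (4) for $\pi'$ only yields it with $\epsilon^2$. That step is also exactly where condition (2) is needed, to get $M'\ge b_k>\epsilon M$. So your remark that condition (2) is superfluous is right for your argument, though not for the paper's.

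The paper's route, in exchange, shows that a weakened form of robustness is inherited by the merged subproblem. That is a reasonable structural fact, but this lemma does not need it.
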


\begin{proof}
    Given $\pi$ satisfying the requirements of the lemma, we prove that the values $\gamma_1,\ldots,\gamma_{k-1}$, as defined in Lemma~\ref{lem:fluid_mixing_solution_structure},
    satisfy $\delta_0 < \gamma_j < 1 - \delta_0$ for all $j \in [k-1]$, where $\delta_0 > 0$ is a constant depending only on $\epsilon$ and $k$. 
    Proving this would imply by Lemma~\ref{lem:fluid_mixing_solution_structure} (3) that the required result follows with $\delta = (\delta_0)^k$.

    The proof proceeds by induction on $k$, where the induction hypothesis is that if $\pi$ is an $\epsilon$-robust fluid mixing problem with $\epsilon>0$, then 
    in its solution $\delta_0 < \gamma_j < 1 - \delta_0$ for for all $j \in [k-1]$, where $\delta_0 > 0$ is a constant depending only on $\epsilon$ and $k$.
    As the proof trivially holds for $k=1$, we assume that $\pi$ is an $\epsilon$-robust fluid mixing problem with $k>1$ and that the claim holds for all smaller values of $k$.

    The proof idea is to apply the induction hypothesis to the problem $\pi'$ obtained by the first target containers merge. 
    As for the problem $\pi$, all $\gamma_j$ for $j>1$ are inherited from $\pi'$, and it remains to show that:
    \begin{enumerate}
        \item $\gamma_1$ is bounded away from both zero and one, 
        \item $\pi'=(\overline{a'}, \overline{u'}, \overline{b'}, \overline{v'})$ is $\epsilon'$-robust for some $\epsilon'>0$ that depends only on $\epsilon$ and $k$.
    \end{enumerate}
    Let $x_1$ denote the amount of source liquid poured into the first target container (possibly from multiple source containers) until the first two target containers merge.
    The upper bound on $\gamma_1$ is
    \[
        \gamma_1 = \frac{v_1-x_1}{(v_1-x_1) + v_2} \le \frac{V}{V+v_2} \le \frac{1}{1+\epsilon}.
    \]
    In order to obtain the lower bound on $\gamma_1$, we define the function $f(x)$ as the maximum possible source liquid mass obtainable for volume $x$.
    Formally, $f(x) = \sum_{i=1}^n \nu_i(x) \frac{a_i}{u_i}$ with $\nu_i(x) = \max(0, \min(u_i, x - U_{i-1}))$, where as before $U_i$ is the prefix sum of the vector $u_i$. We also define the function $g(x)$ as the required density of the first target container after pouring volume $x$ of the densest source liquids,
    \[
        g(x) = \frac{b_1 - f(x)}{v_1 - x}.
    \]
    It should be noted that $\slack_1(\pi) = f(v_1) - b_1 > 0$, implying that the range of $x$ for which the numerator of $g(x)$ is non-negative is $[0,x_0]$ for some value $0 < x_0 < v_1$. Also, we note that the function $f$ is piecewise linear, as its slope for $x \in [U_{i-1},U_i]$ is $a_i/u_i$. Moreover, it is concave as $a_i/u_i$ is a decreasing sequence.

    \begin{claim}
        The function $g(x)$ is monotone decreasing for $x \in [0,x_0]$
    \end{claim}
    \begin{claimproof}
        We prove piecewise monotonicity for $x$ in the interval $[0,x_0]$.
        For numbers $x,\Delta$ and $l$ such that $U_{l-1} \le x < x+\Delta \le \min(U_l,x_0)$, we have
        \[
            g(x + \Delta) = \frac{b_1 - f(x + \Delta)}{v_1 - x - \Delta} = \frac{b_1 - f(x) - \Delta \frac{a_l}{u_l}}{v_1 - x - \Delta}.
        \]
        Then, proving that $g(x + \Delta) \le g(x)$ amounts to proving that 
        \[
            \frac{a_l}{u_l} \ge \frac{b_1 - f(x)}{v_1 - x}.
        \]
        Assume to the contrary that for some $x,\Delta$ and $l$, the required inequality does not hold, $b_1 - f(x) > \frac{a_l}{u_l} (v_1 - x)$. 
        Then
        \begin{align*}
            b_1 
              &< f(v_1) 
              =  \sum_{i=1}^n \nu_i(v_1) \frac{a_i}{u_i}
              =  \sum_{i=1}^n \Big(\nu_i(v_1) - \nu_i(x)\Big) \frac{a_i}{u_i} + f(x)
              =   \sum_{i=l}^n \Big(\nu_i(v_1) - \nu_i(x)\Big) \frac{a_i}{u_i} + f(x) \\
              &\le \frac{a_l}{u_l} \sum_{i=l}^n \Big(\nu_i(v_1) - \nu_i(x)\Big) + f(x) 
              = \frac{a_l}{u_l} \Big( \sum_{i=1}^n \nu_i(v_1) - \sum_{i=1}^n \nu_i(x) \Big) + f(x)
              =   \frac{a_l}{u_l} (v_1 - x)  + f(x)
              < b_1,
        \end{align*}
        which is a contradiction.
    \end{claimproof}    

    \begin{claim}
        The equation $g(x) = \frac{b_2}{v_2}$ has a solution $x_1 \in [0,x_0]$, and the solution satisfies $v_1 - x_1 \ge \epsilon^2 V$.
    \end{claim}
    \begin{claimproof}
        The equation has a solution $x_1 \in [0,x_0]$, since $g$ is continuous and $g(0)=\frac{b_1}{v_1} \ge \frac{b_2}{v_2}$ and $g(x_0) = 0 < \frac{b_2}{v_2}$. 

        The $\epsilon$-robustness of $\pi$ implies that $f(v_1) - b_1 = \slack_1(\pi) \ge \epsilon M$ and that $\frac{a_1}{u_1} < \frac{M}{\epsilon V}$. Therefore,
        \begin{align*}
            \epsilon M 
              \le f(v_1) - b_1
              =   f(v_1) - f(x_0)
              \le f(v_1) - f(x_1)
              = \sum_{i=1}^n \Big(\nu_i(v_1) - \nu_i(x_1)\Big) \frac{a_i}{u_i}
              \le \frac{a_1}{u_1} (v_1 - x_1)
              < \frac{M}{\epsilon V} (v_1 - x_1),
        \end{align*}
        implying that $v_1 - x_1 \ge \epsilon^2 V$, as claimed.
    \end{claimproof}
    This implies the lower bound on $\gamma_1$ as
    \[
        \gamma_1 = \frac{v_1-x_1}{(v_1-x_1) + v_2} \ge \frac{\epsilon^2 V}{\epsilon^2 V + v_2} \ge \frac{\epsilon^2 V}{\epsilon^2 V + V} = \frac{\epsilon^2}{\epsilon^2+1}.
    \]
    It remains to show that the problem $\pi'$ is $\epsilon'$-robust for some $\epsilon'>0$ that is only a function of $\epsilon$ and $k$.
    We check the four robustness conditions:
    \begin{enumerate}
        \item $\slack_j(\pi')=\slack_{j+1}(\pi) > \epsilon M \ge \epsilon M'$ for $j \in [k-1]$.  
        \item $b'_j = b_{j+1} > \epsilon M \ge \epsilon M'$ for $j=2,\ldots,k-1$ and $b'_1 \ge b_2 > \epsilon M \ge \epsilon M'$.
        \item $u'_j = u_{j+1} > \epsilon V \ge \epsilon V'$ for $j=2,\ldots,k-1$ and $u'_1 \ge u_2 > \epsilon V \ge \epsilon V'$,
        \item 
            $\epsilon \frac{a'_1}{u'_1} \le \epsilon \frac{a_1}{u_1} \le \frac{M}{V} \le \frac{M}{V'} \le \frac{1}{\epsilon}\frac{M'}{V'}$,
            where in the last inequality, we use the inequality $\epsilon M \le b_k \le M'$.
    \end{enumerate}
\end{proof}

%%%%%%%%%%%%%%%%%%%%%%%%%%%%%%%%%%%%%%%%%%%%%%%%%%%%%%%%%%%%%%%%%%%%%%
\section{The Linear Partition Problem}\label{sec:linear}

In this section, we return to the original partition problem and consider instances where the part sizes $p_1,\ldots,p_k$ are linear functions of $n$, namely $p_i = \alpha_i n$. We show that if these sizes satisfy a stronger version of the slack condition, the problem is solvable for a sufficiently large $n$ using a randomized algorithm whose failure probability decays exponentially with $n$.
The solution is based on applying randomized rounding to the fractional solution of the linear problem presented in the previous section and showing that the imperfection introduced by the rounding can be mended with high probability by reassigning a small number of elements.
The uniform lower bound $\delta$ on the entries of the fractional solution $X$, supplied by Lemma~\ref{lem:non_vanishing_last_block}, plays a crucial role in performing the corrections, as it guarantees, with high probability, the existence of an abundant pool of swap candidates for equating the set sums.

\begin{defn}
   Let $A=[\alpha_1, \ldots ,\alpha_k]$ be a non-descending sequence of positive rational numbers satisfying $\sum_{i=1}^k \alpha_i = 1$.
   The {\em linear partition problem family $\PP_{\alpha_1,\ldots,\alpha_k}$} is defined as the set of equal sum partition problems $(n,k,P_n=[\alpha_1 n,\ldots,\alpha_k n])$, where $n$ is a positive integer such that $\alpha_j n$ are integral for all $j=1,\ldots,k$, and 
    \begin{equation}\label{eq:asymp_slack}
         \lim_{n \rightarrow \infty}\frac{1}{n^2}\slack_j(P_n) > 0 \text{ for all }j=1,\ldots,k-1.
    \end{equation}
\end{defn}

\begin{rem}
    Using the equality $s^{n,k} = \frac{n(n+1)}{2k}$ and defining $A_j = \sum_{j'=1}^j \alpha_{j'}$ for $j \in [k]$, the asymptotic slack condition \eqref{eq:asymp_slack} translates into:
    \begin{equation*}
         \lim_{n \rightarrow \infty}\frac{1}{n^2}\slack_j(P_n) 
            = \lim_{n \rightarrow \infty}\frac{1}{n^2} \left[\sum_{i=1}^{A_j n} (n-i+1)- j s^{n,k}\right] 
            = A_j\left(1-\frac{A_j}{2}\right) - \frac{j}{2k} > 0.
    \end{equation*}
\end{rem}

\begin{defn}\label{def:slack_alphas}
    Let $A=[\alpha_1, \ldots ,\alpha_k]$ be a non-descending sequence of positive rational numbers such that $\sum_{i=1}^k \alpha_i = 1$.
    Denote $A_j = \sum_{j'=1}^j \alpha_{j'}$ for $j \in [k]$ and define
    \begin{equation*}
        \slack_j(A) := A_j\left(1-\frac{A_j}{2}\right) - \frac{j}{2k}.
    \end{equation*}
    and
    \begin{equation*}
        \slack(A) := \min_{1\le j<k} \slack_j(A).
    \end{equation*}
    
\end{defn}

\noindent Thus, the asymptotic slack condition \eqref{eq:asymp_slack} is equivalent to $\slack(A) > 0$.

\medskip

\noindent Throughout this section, we let $S(Q)$ denote the sum of elements in the set $Q$: $S(Q) = \sum_{i \in Q} i$.

\medskip

\noindent We restate Theorem~\ref{thm:alg_solves_linear_partition_whp}:

\medskip

\noindent\textbf{Theorem \ref{thm:alg_solves_linear_partition_whp}.}
\emph{Let $\PP_{\alpha_1,\ldots,\alpha_k}$ be a linear partition problem family. 
Then, there exist $\delta>0$ and $N \in \N$ so that for every $n \ge N$, the probability that Algorithm~\ref{alg:linear_partition} fails to solve $(n,k,P_n) \in \PP_{\alpha_1,\ldots,\alpha_k}$ is at most $e^{-\delta n}$.}

\medskip

\begin{cor}\label{cor:linear_partition_is_solvable}
    A linear partition problem family $\PP_{\alpha_1,\ldots,\alpha_k}$ is solvable for a sufficiently large value of $n$.
\end{cor}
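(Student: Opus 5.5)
This follows from Theorem~\ref{thm:alg_solves_linear_partition_whp} by the probabilistic method. Fix the family $\PP_{\alpha_1,\ldots,\alpha_k}$ and take the constants $\delta>0$ and $N\in\N$ supplied by the theorem. For every admissible $n\ge N$, that is, every $n$ with all $\alpha_j n$ integral and $k \mid n(n+1)/2$, Algorithm~\ref{alg:linear_partition} run on $(n,k,P_n)$ fails with probability at most $e^{-\delta n}<1$. Hence it succeeds with positive probability.

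Success means the algorithm outputs an actual partition of $[n]$ into sets of sizes $\alpha_1 n,\ldots,\alpha_k n$, all with sum $\snk$. So some run of the randomness yields such a partition. An event of positive probability is nonempty, so the instance $(n,k,P_n)$ is solvable. This holds for every instance of the family with $n\ge N$, which is the claim.

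The only point to check is that a run counted as a success really produces a valid equal-sum partition; otherwise the argument is immediate. I will verify this against the definition of failure in Algorithm~\ref{alg:linear_partition}. The algorithm must either output a valid equal-sum partition or declare failure. In particular, it should not be able to terminate with unequal sums and have that outcome counted as a success.

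If one wants an explicit threshold, enlarging $N$ so that $e^{-\delta N}<1$ is unnecessary, since that already holds for every $n\ge 1$. So the $N$ from Theorem~\ref{thm:alg_solves_linear_partition_whp} serves directly as the bound in ``sufficiently large $n$''.
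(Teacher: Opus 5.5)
Your proposal is correct and matches how the paper intends the corollary to follow from Theorem~\ref{thm:alg_solves_linear_partition_whp}: the failure probability $e^{-\delta n}<1$ means some run of Algorithm~\ref{alg:linear_partition} succeeds. Your check that a successful run really yields a solution is right, since the first loop exits only when $|Q_j|=\alpha_j n$ for all $j$, and the size-preserving second loop exits without failure only when $S(Q_j)=\snk$ for all $j$. The paper gives no separate proof and treats the corollary as exactly this immediate consequence.
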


\begin{algorithm}[H]
\caption{Randomized Rounding For Linear Partitions}\label{alg:linear_partition}
\begin{algorithmic}[1]
\Require a sequence of positive rational numbers $\alpha_1 \leq \ldots \leq \alpha_k$ satisfying $\sum_{i=1}^k \alpha_i = 1$
\Require a natural number $n$ such that $\snk$ as well as $\alpha_j n$ for $j\in[k]$ are integers
\Ensure a partition $Q_1 \sqcup \cdots \sqcup Q_k = [n]$ where $|Q_j|=\alpha_j n$ and $S(Q_j) = \snk$ for all $j$ or Failure
\State define a fluid mixing problem $\pi=(\overline{a}, \overline{u}, \overline{b}, \overline{v})$, where $a_i=n-i+1$, $u_i=1$ for  $i\in[n]$, and $b_j=\snk$, $v_j=\alpha_j n$ for $j\in[k]$ \label{alg:define_problem}
\State let $x_{i,j}$ for $i\in[n]$, $j\in[k]$ be the solution of $\pi$ provided by Algorithm~\ref{alg:fluid_mixing_solution_rec}                                  \label{alg:problem_solution}
\State choose values $X_{i} \in [k]$ independently at random for $i \in [n]$, where $\Pr[X_{i}=j]=x_{i,j}$
\State let $Q_j \gets \{n-i+1: X_i=j\}$ for $j \in [k]$\label{alg:before_while_1}
\While{there exist $j',j'' \in [k]$ with $|Q_{j'}|>\alpha_{j'} n$ and $|Q_{j''}|<\alpha_{j''} n$}
    \State choose such $j',j''$ and choose $i \in Q_{j'}$
    \State let $Q_{j'} \gets Q_{j'} \setminus \{i\}$ and $Q_{j''} \gets Q_{j''} \cup \{i\}$
\EndWhile\label{alg:after_while_1}
\While{there exists $j$ where $S(Q_j) \neq \snk$}
    \State let $j' \gets \argmax_j S(Q_j)$ and $j'' \gets \argmin_j S(Q_j)$   \label{alg:2nd_while_j}
    \State let $\Delta \gets \min(S(Q_{j'}) - \snk, \snk - S(Q_{j''}))$
    \If {there exist $z' \in Q_{j'}$ and $z'' \in Q_{j''}$ such that $0 < z' - z'' \leq \Delta$}
        \State choose such $z',z''$ so that $z' - z''$ is maximal              \label{alg:2nd_while_z}
        \State let $Q_{j'} \gets (Q_{j'} \setminus \{z'\}) \cup \{z''\}$ and $Q_{j''} \gets (Q_{j''} \setminus \{z''\}) \cup \{z'\}$
    \Else
        \State \return Failure
    \EndIf
\EndWhile
\State \return $Q_1, \cdots, Q_k$
\end{algorithmic}
\end{algorithm}

\begin{rem}
    The use of Algorithm~\ref{alg:fluid_mixing_solution_rec} in Algorithm~\ref{alg:linear_partition} relies on the fact that its solution satisfies Lemma~\ref{lem:non_vanishing_last_block}.
    Standard, off-the-shelf LP solvers are generally unsuitable here, as they often yield a sparse solution with excessive zeros, which prevents randomized rounding from reaching its full potential. To circumvent this, the linear program can be modified to optimize directly for the required property, ensuring a solution that performs at least as well as that of Algorithm~\ref{alg:fluid_mixing_solution_rec}.
\end{rem}

\begin{proof}[Proof of Theorem~\ref{thm:alg_solves_linear_partition_whp}]
    Let $\PP_{\alpha_1,\ldots,\alpha_k}$ and $P_n$ be as in the statement. 
    We use the following version of Hoeffding's inequality (for example, \cite{alon2008probabilistic}, A.1.18).
    Given independent random variables $X_1, \ldots, X_n$ taking values in the segment $[a,b]$, let $X=\sum_{i=1}^n X_i$ and $\mu=E[X]$. Then:
    \begin{equation}\label{eq:Hoeffding}
        \Pr[|X-\mu| \ge t] < 2 \cdot \exp\!\left[-\frac{2t^2}{n(b-a)^2}\right]
    \end{equation}

    For variables that change value in the algorithm, such as $Q_j$, if it is not clear from the context, we specify the value at a give time as $Q_j^{(L\ref{alg:before_while_1})}$, which is the value just after line~\ref{alg:before_while_1} was executed, or $Q_j^{(l)}$. 
    
    \noindent
    The proof is presented as a sequence of claims. 
    
    \begin{claim}\label{claim:linear_v}
        For any $\epsilon > 0$, there exist $\delta>0$ and a natural number $N$, so that for every $n \ge N$, after line~\ref{alg:before_while_1} of Algorithm~\ref{alg:linear_partition},
        \[
            \Pr\!\left[ \max_{j \in [k]} \Big|\, |Q_j^{(L\ref{alg:before_while_1})}| - \alpha_j n\Big| > \epsilon n \right] < e^{-\delta n}.
        \]
    \end{claim}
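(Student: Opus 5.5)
The size $|Q_j^{(L\ref{alg:before_while_1})}|$ is a sum of independent indicators, so the natural tool is Hoeffding's inequality \eqref{eq:Hoeffding} followed by a union bound over $j\in[k]$.

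Fix $j\in[k]$. By line~\ref{alg:before_while_1}, $|Q_j^{(L\ref{alg:before_while_1})}| = \sum_{i=1}^n \mathbf{1}[X_i=j]$. These indicators are independent, take values in $[0,1]$, and have means $x_{i,j}$. Their expected sum is
\[
\mu_j=\sum_{i=1}^n x_{i,j}=\sum_{i=1}^n u_i x_{i,j}=v_j=\alpha_j n,
\]
using $u_i=1$ and constraint \eqref{eq:frac_primal2} of the solution from line~\ref{alg:problem_solution}. This solution exists: $P_n$ is an ESPP instance, so the slack condition holds, the fractional slack of $\pi$ coincides with $\slack(P_n)$, and Theorem~\ref{thm:slack_implies_fluid_mixing} applies. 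Note also that \eqref{eq:frac_primal1} makes $\Pr[X_i=\cdot]$ a probability distribution, and $x_{i,j}\ge 0$.

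Applying \eqref{eq:Hoeffding} with $t=\epsilon n$, $a=0$, $b=1$ gives
\[
\Pr\!\left[\,\big||Q_j^{(L\ref{alg:before_while_1})}|-\alpha_j n\big|\ge \epsilon n\right] < 2e^{-2\epsilon^2 n}.
\]
A union bound over the $k$ indices $j$ bounds the probability in the claim by $2k e^{-2\epsilon^2 n}$.

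To finish, choose $\delta=\epsilon^2$ and $N$ large enough that $2k e^{-\epsilon^2 N}<1$. Then for all $n\ge N$ we have $2ke^{-2\epsilon^2 n}<e^{-\epsilon^2 n}$. Since $k$ is fixed for the family, $N$ depends only on $\epsilon$ and $k$.

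The argument is routine. The only step needing care is the identification $\mu_j=\alpha_j n$, which depends on the fractional solution satisfying the volume constraints exactly. No robustness or $\delta$-lower-bound properties of Lemma~\ref{lem:non_vanishing_last_block} are needed for this claim.
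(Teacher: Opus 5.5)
Your proposal is correct and follows the paper's proof: Hoeffding's inequality on a sum of independent indicators with mean $\sum_i x_{ij}=\alpha_j n$, using $t=\epsilon n$ and $b-a=1$, followed by a union bound over $j\in[k]$. Your concrete choice $\delta=\epsilon^2$ with $N$ satisfying $2ke^{-\epsilon^2 N}<1$ is one instance of the paper's ``any $0<\delta<2\epsilon^2$ and sufficiently large $N$.''
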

    \begin{claimproof}
        For any $j \in [k]$, $|Q_j|$ is the sum of $n$ independent Bernoulli random variables, where the $i$-th variable equals $1$ with probability $x_{ij}$ and $E[\,|Q_j|\,] = \sum_{i=1}^n x_{ij} = \alpha_j n$. Therefore, by \eqref{eq:Hoeffding} with $t=\epsilon n$ and $b-a=1$,
        \[
            \Pr\!\left[ \big| |Q_j^{(L\ref{alg:before_while_1})}| - \alpha_j n\big| > \epsilon n \right] < 2 e^{-2 \epsilon^2 n}.
        \]
        Performing a union bound on $j \in [k]$ yields the required result for any $0 < \delta < 2\epsilon^2$ and sufficiently large $N$.
    \end{claimproof}

    \begin{claim}\label{claim:linear_m}
        For any $\epsilon > 0$, there exist $\delta>0$ and a natural number $N$, so that for every $n \ge N$, after line~\ref{alg:before_while_1} of Algorithm~\ref{alg:linear_partition}, 
        \[
            \Pr\!\left[ \max_{j \in [k]} \Big| S(Q_j^{(L\ref{alg:before_while_1})}) - \snk \Big| > \epsilon n^2 \right] < e^{-\delta n}.
        \]
    \end{claim}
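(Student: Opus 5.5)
The argument mirrors Claim~\ref{claim:linear_v}, with Bernoulli indicators replaced by weighted indicators. Fix $j\in[k]$. After line~\ref{alg:before_while_1}, the sum $S(Q_j^{(L\ref{alg:before_while_1})})$ can be written as $\sum_{i=1}^n Y_i$, where $Y_i=(n-i+1)\cdot\mathbf{1}[X_i=j]$. The $Y_i$ are independent, since the $X_i$ are chosen independently. Each $Y_i$ takes values in $[0,n]$.

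The first step is to compute the expectation. By linearity,
\[
E\big[S(Q_j)\big]=\sum_{i=1}^n (n-i+1)\,x_{i,j}=\sum_{i=1}^n a_i x_{i,j}=b_j=\snk,
\]
where the middle equalities use the problem $\pi$ defined in line~\ref{alg:define_problem} together with constraint \eqref{eq:frac_primal3}. That constraint holds for the solution returned in line~\ref{alg:problem_solution}. This needs $\pi$ to satisfy the slack condition, so that Theorem~\ref{thm:slack_implies_fluid_mixing} applies. Since $(n,k,P_n)$ is an ESPP instance, its slack is nonnegative, and by the remark after Definition~\ref{def:frac_slack} the fractional slack of $\pi$ coincides with $\slack(P_n)$. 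Hence the algorithm does return a valid $X$.

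The second step applies Hoeffding's inequality \eqref{eq:Hoeffding} with $t=\epsilon n^2$ and $b-a=n$:
\[
\Pr\big[|S(Q_j)-\snk|\ge \epsilon n^2\big]<2\exp\!\Big[-\frac{2\epsilon^2 n^4}{n\cdot n^2}\Big]=2e^{-2\epsilon^2 n}.
\]
A union bound over the $k$ indices $j$ then gives a failure probability of at most $2k e^{-2\epsilon^2 n}$. This is below $e^{-\delta n}$ for any fixed $0<\delta<2\epsilon^2$ once $n\ge N$, with $N$ chosen large enough to absorb the factor $2k$.

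There is no real obstacle here. The only point requiring care is the scaling: the range of each summand is of order $n$ and the deviation is of order $n^2$, so the exponent remains linear in $n$.
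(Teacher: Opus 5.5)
Your proposal is correct and follows the paper's proof essentially verbatim: you write $S(Q_j)$ as a sum of independent summands in $[0,n]$ with mean $\sum_i a_i x_{i,j}=\snk$, apply Hoeffding with $t=\epsilon n^2$ and $b-a=n$, and take a union bound over $j\in[k]$. Your added check that $\pi$ satisfies the slack condition (because its fractional slack coincides with $\slack(P_n)$), so that Algorithm~\ref{alg:fluid_mixing_solution_rec} really returns a valid $X$, is a detail the paper leaves implicit.
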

    \begin{claimproof}
        For any $j \in [k]$, $S(Q_j^{(L\ref{alg:before_while_1})})$ is the sum of $n$ independent random variables where the $i$-th variable is $a_i$ with probability $x_{ij}$ and $0$ otherwise, where $E[S(Q_j^{(L\ref{alg:before_while_1})})] = \sum_{i=1}^n a_i x_{ij} = \snk$. By \eqref{eq:Hoeffding} with $t=\epsilon n^2$ and $b-a=n$, 
        \[
            \Pr\!\left[ \left| S(Q_j^{(L\ref{alg:before_while_1})}) - \snk\right| > \epsilon n^2 \right] \le 2 e^{-2\epsilon^2 n}.
        \]
        Performing a union bound on $j \in [k]$ yields the required result for any $0 < \delta < 2\epsilon^2$ and sufficiently large $N$.
    \end{claimproof}
    
    \begin{claim}\label{claim:linear_iter}
        Let $\iterI$ denote the number of iterations performed by the first while loop. Then, for any $\epsilon > 0$, there exist $\delta>0$ and a natural number $N$, so that for every $n \ge N$, 
        \[
            \Pr[\iterI > \epsilon n] < e^{-\delta n}.
        \]
    \end{claim}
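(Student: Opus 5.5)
The plan is to bound the number of iterations of the first while loop deterministically in terms of the size deviations after line~\ref{alg:before_while_1}, and then invoke Claim~\ref{claim:linear_v}.

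\emph{Potential function.} Let
\[
D=\sum_{j\in[k]}\max\bigl(0,\,|Q_j|-\alpha_j n\bigr).
\]
Since $\sum_j |Q_j| = n = \sum_j \alpha_j n$ at all times, $D$ also equals $\sum_j \max(0,\alpha_j n-|Q_j|)$. The loop is entered only when some $j'$ has an excess and some $j''$ has a deficit. Each iteration moves one element from $Q_{j'}$ to $Q_{j''}$. All quantities $|Q_j|$ and $\alpha_j n$ are integers, so the excess of $Q_{j'}$ drops by exactly one. The deficit of $Q_{j''}$ drops by one, and no other set changes. Hence $D$ decreases by exactly $1$ per iteration.

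\emph{Termination and the iteration bound.} When $D>0$ there is both a set with excess and a set with deficit, so the loop continues exactly until $D=0$. Therefore $\iterI = D^{(L\ref{alg:before_while_1})}$.

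\emph{Bounding $D$.} Suppose the event $\max_j \bigl||Q_j^{(L\ref{alg:before_while_1})}|-\alpha_j n\bigr|\le \epsilon' n$ holds. Then $D\le k\epsilon' n$. Choosing $\epsilon'=\epsilon/k$ gives $\iterI\le \epsilon n$ on this event.

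\emph{Conclusion.} Claim~\ref{claim:linear_v}, applied with $\epsilon/k$ in place of $\epsilon$, provides $\delta>0$ and $N$ such that the complementary event has probability less than $e^{-\delta n}$ for all $n\ge N$. This yields the claim.

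I do not expect a real obstacle. The only point to verify is that $D$ drops by exactly one per iteration and never increases, which follows from the integrality of $\alpha_j n$ and the choice of $j'$ and $j''$ in the loop condition.
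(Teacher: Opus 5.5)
Your proof is correct and follows essentially the same approach as the paper. The paper also writes $\iterI=\frac{1}{2}\sum_j \bigl||Q_j|-\alpha_j n\bigr|$ (which equals your total excess $D$), bounds it by a multiple of the maximal deviation, and applies Claim~\ref{claim:linear_v} with $2\epsilon/k$ where you use $\epsilon/k$; your potential-function argument that each iteration lowers $D$ by exactly one supplies the justification the paper leaves as an observation.
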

    \begin{claimproof}
        We first observe that 
        \begin{equation}\label{eq:iter1}
            \iterI = \frac{1}{2}\sum_{j=1}^k \big| |Q_j^{(L\ref{alg:before_while_1})}| - \alpha_j n\big|
                   \le \frac{k}{2} \max_{j \in [k]} \Big|\, |Q_j^{(L\ref{alg:before_while_1})}| - \alpha_j n\Big|.
        \end{equation}
        Therefore, given $\epsilon>0$ we have 
        \[
            \Pr[\iterI > \epsilon n] \le \Pr\!\left[\max_{j \in [k]} \Big|\, |Q_j^{(L\ref{alg:before_while_1})}| - \alpha_j n\Big| > \frac{2\epsilon}{k} n\right] < e^{-\delta' n},
        \]
        where the last inequality is obtained by applying Claim~\ref{claim:linear_v} with $\epsilon'=2\epsilon/k$ to obtain $\delta', N'$.
        This yields the required result with $\delta = \delta'$ and $N = N'$.
    \end{claimproof}

    \begin{claim}\label{claim:linear_vm}
        After termination of the first while loop we have: 
        \begin{enumerate}
            \item 
                $|Q_j^{(L\ref{alg:after_while_1})}| = \alpha_j n$ for all $j \in [k]$,
            \item 
                for any $\epsilon > 0$, there exist $\delta>0$ and a natural number $N$, so that for every $n \ge N$
                \[
                    \Pr\!\left[ \max_{j \in [k]} \Big| S(Q_j^{(L\ref{alg:after_while_1})}) - \snk \Big| > \epsilon n^2 \right] < e^{-\delta n}.
                \]
                
        \end{enumerate}
        
    \end{claim}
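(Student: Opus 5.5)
Part (1) is a counting statement about the first while loop. Part (2) follows by bounding how much each move of that loop can perturb the set sums, and combining this with Claims~\ref{claim:linear_m} and~\ref{claim:linear_iter}.

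For (1), the key invariant is $\sum_{j}|Q_j| = n = \sum_j \alpha_j n$, which holds after line~\ref{alg:before_while_1} and is preserved by every iteration, since each iteration moves one element from one set to another. Hence, as long as some $|Q_j| \neq \alpha_j n$, there is both an index $j'$ with $|Q_{j'}|>\alpha_{j'}n$ and an index $j''$ with $|Q_{j''}|<\alpha_{j''}n$. So the loop condition fails only when $|Q_j|=\alpha_j n$ for all $j$. For termination, note that each iteration decreases the potential $\sum_j \big||Q_j|-\alpha_j n\big|$ by exactly $2$. This is also what gives the equality in \eqref{eq:iter1}.

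For (2), each iteration moves a single element $i\in[n]$. It therefore changes $S(Q_{j'})$ and $S(Q_{j''})$ by at most $n$ each and leaves the other sums unchanged. Consequently, for every $j$,
\[
\big|S(Q_j^{(L\ref{alg:after_while_1})}) - S(Q_j^{(L\ref{alg:before_while_1})})\big| \le n\cdot \iterI .
\]
Given $\epsilon>0$, apply Claim~\ref{claim:linear_m} with $\epsilon/2$ to obtain $\delta_1,N_1$, and Claim~\ref{claim:linear_iter} with $\epsilon/2$ to obtain $\delta_2,N_2$. Outside the union of the two failure events we have $\max_j|S(Q_j^{(L\ref{alg:after_while_1})})-\snk| \le \epsilon n^2/2 + n\cdot\epsilon n/2 = \epsilon n^2$. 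The failure probability is at most $e^{-\delta_1 n}+e^{-\delta_2 n}$. This is below $e^{-\delta n}$ for any $0<\delta<\min(\delta_1,\delta_2)$ once $n\ge N$, with $N\ge \max(N_1,N_2)$ chosen sufficiently large.

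There is no real obstacle here. The only point needing care is the invariant argument showing that the loop cannot stall with some sizes still wrong. The rest is a direct union bound.
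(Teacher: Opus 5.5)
Your proof is correct and matches the paper's argument. Part (1) is read off from the loop's termination condition; you helpfully make explicit the invariant $\sum_j |Q_j| = n = \sum_j \alpha_j n$ and the termination potential, which the paper leaves implicit. Part (2) combines the bound $|S(Q_j^{(L\ref{alg:after_while_1})}) - S(Q_j^{(L\ref{alg:before_while_1})})| \le n\cdot \iterI$ with a union bound over Claims~\ref{claim:linear_m} and~\ref{claim:linear_iter} at $\epsilon/2$, exactly as the paper does.
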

    \begin{claimproof}
        The first claim follows as it is equivalent to the termination condition of the loop.
        The second claim follows from Claim~\ref{claim:linear_m} with $\epsilon', \delta'$ and $N'$ and from Claim~\ref{claim:linear_iter} with $\epsilon'', \delta''$ and $N''$. 
        Then, for $n \ge \max(N',N'')$,
        \begin{eqnarray*}
                \Pr\left[ \max_{j \in [k]} \Big| S(Q_j^{(L\ref{alg:before_while_1})}) - \snk \Big| > \epsilon' n^2 \right] &<& e^{-\delta' n}, \\
                \Pr[\iterI > \epsilon'' n] &<& e^{-\delta'' n}.
        \end{eqnarray*}
        As for any $j \in [k]$, each iteration of the first while loop changes $S(Q_j)$ by at most $n$, we conclude that $|S(Q_j^{(L\ref{alg:before_while_1})} - S(Q_j^{(L\ref{alg:after_while_1})}| \le \iterI \cdot n$. Therefore, performing a union bound on the bad events of claims \ref{claim:linear_m} and \ref{claim:linear_iter} with $\epsilon'=\epsilon''=\epsilon/2$, we conclude that
        \[
            \Pr\!\left[ \max_{j \in [k]} \Big| S(Q_j^{(L\ref{alg:after_while_1})}) - \snk \Big| > \epsilon n^2 \right]
                \le \Pr\!\left[ \max_{j \in [k]} \Big| S(Q_j^{(L\ref{alg:before_while_1})}) - \snk \Big| > \frac{\epsilon}{2} n^2 \right] + \Pr\!\Big[\iterI > \frac{\epsilon}{2} n\Big]
                < e^{-\delta' n} + e^{-\delta'' n},
        \]
        yielding the required result for any $0 < \delta < \min(\delta',\delta'')$ and sufficiently large $N$.
    \end{claimproof}

    \begin{claim}
        There are constant $N$ and $\epsilon' > 0$, so that the fluid mixing problem $\pi = \pi_n$ defined in the algorithm is $\epsilon'$-robust for all $n > N$.
    \end{claim}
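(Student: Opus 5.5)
We verify the four conditions of Definition~\ref{def:robust_fluid_mixing} directly for $\pi_n$, where $a_i=n-i+1$, $u_i=1$, $b_j=\snk$ and $v_j=\alpha_j n$. The totals are $V=n$ and $M=n(n+1)/2$, and the source densities are $a_i/u_i=n-i+1$, so $a_1/u_1=n$. The plan is to show that each normalized quantity converges, as $n\to\infty$, to a strictly positive constant depending only on $A=[\alpha_1,\ldots,\alpha_k]$. We then take $\epsilon'$ to be half the minimum of these limits and choose $N$ so that every quantity is within $\epsilon'$ of its limit for $n>N$.

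Conditions (2), (3) and (4) are immediate.
\begin{itemize}
\item For (2), $b_j/M=1/k$ for every $j$, so any $\epsilon'<1/k$ works for all $n$.
\item For (3), $v_j/V=\alpha_j\ge\alpha_1>0$, so any $\epsilon'<\alpha_1$ works.
\item For (4), $\frac{M}{V}\big/\frac{a_1}{u_1}=\frac{(n+1)/2}{n}>\frac12$, so any $\epsilon'\le \frac12$ works.
\end{itemize}

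The only step needing care is (1). We must confirm that the fractional slack of Definition~\ref{def:frac_slack} coincides with the integral slack of Definition~\ref{def:slack}. Since $u_i=1$ and $V_l=A_l n$ is an integer, $\mu_{i,l}=1$ for $i\le A_l n$ and $\mu_{i,l}=0$ otherwise. This gives $\slack_l(\pi_n)=\sum_{i=1}^{A_l n}(n-i+1)-l\snk=\slack_l(P_n)$, as the paper remarked when introducing the fractional slack. By the Remark following the definition of $\PP_{\alpha_1,\ldots,\alpha_k}$, $\slack_l(P_n)/n^2\to\slack_l(A)$, and $\slack(A)>0$ by \eqref{eq:asymp_slack}. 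Since $M/n^2\to 1/2$, it follows that $\slack(\pi_n)/M\to 2\slack(A)>0$.

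The convergence can be made explicit: $\slack_l(P_n)=A_l n\,(2n-A_l n+1)/2-l(n+1)n/(2k)$ is an exact quadratic in $n$. Hence for $n$ large, $\slack(\pi_n)>\slack(A)\,M$ holds with room to spare. Taking $\epsilon'=\min\bigl(\slack(A),\alpha_1/2,1/(2k),1/2\bigr)$ and $N$ large enough for this slack estimate, all four conditions hold for every $n>N$. No step is a genuine obstacle; the only subtlety is the identification of the fractional slack with $\slack_l(P_n)$, which uses that each $A_l n$ is integral, so no fractional $\mu_{i,l}$ occurs.
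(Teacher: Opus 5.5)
Your proof is correct and follows essentially the same route as the paper: you check the four robustness conditions directly from $M=n(n+1)/2$, $V=n$, $b_j/M=1/k$, $v_j/V=\alpha_j$, $(n+1)/(2n)>1/2$, and $\slack(\pi_n)/M\to 2\slack(A)>0$. Your explicit check that the fractional slack equals $\slack_l(P_n)$ (via integrality of $A_l n$) and your concrete choice of $\epsilon'$ only make precise what the paper leaves implicit.
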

    \begin{claimproof}
        We check the four conditions in the Definition~\ref{def:robust_fluid_mixing} for $\pi_n$ to be $\epsilon'$-robust, where $\pi_n$ corresponds to $P_n \in \PP_{\alpha_1,\ldots,\alpha_k}$. The total mass and total volume of $\pi_n$ are $M=n(n+1)/2$ and $V = n$:
        \begin{enumerate}
        \item 
            If $0 < \epsilon'/2 < \slack(\PP_{\alpha_1,\ldots,\alpha_k})$ 
            then $\epsilon'/2 < \lim_{n \rightarrow \infty} \slack(P_n)/n^2 = \frac{1}{2} \lim_{n \rightarrow \infty} \slack(\pi_n)/M$. 
            Therefore, $\slack(\pi_n)/M > \epsilon'$ for a sufficiently large $n$.
        \item If $0 < \epsilon' < \frac{1}{k}$, then $\frac{b_j}{M} = \frac{2\snk}{n(n+1)} = \frac{1}{k} > \epsilon' $ for all $j \in [k]$.
        \item If $0 < \epsilon' < \alpha_j$ for all $j \in [k]$, then $\frac{v_j}{V} = \frac{\alpha_j n }{n} > \epsilon'$ for all $j \in [k]$.
        \item If $0 < \epsilon' < \frac{1}{2}$, then  $\frac{M}{V} \frac{u_1}{a_1}= \frac{n+1}{2n}> \epsilon'$.
        \end{enumerate}
    \end{claimproof}

    \begin{claim}\label{claim:linear_pairs_before}
        For $j',j''\in[k]$ and $\Delta \in \N$, let $R_{j',j'',\Delta}^{(L\ref{alg:before_while_1})}$ be the random variable defined by
        \[
            R_{j',j'',\Delta}^{(L\ref{alg:before_while_1})} 
              = \Big|\big\{(z',z'') : z' - z'' = \Delta \textrm{ and } z' \in Q_{j'}^{(L\ref{alg:before_while_1})}  \textrm{ and } z'' \in Q_{j''}^{(L\ref{alg:before_while_1})}\big\}\Big|.
        \]
        Then, there exist constants $c, d, \eta > 0$ so that for a sufficiently large $n$
        \[
            \Pr\left[ \min_{j' \ne j'',\, \Delta \le d n} R_{j',j'',\Delta}^{(L\ref{alg:before_while_1})} < \eta n \right] < e^{-c n}.
        \]
    \end{claim}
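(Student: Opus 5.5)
We plan to reduce the claim to a count over a linear-size block of rows of the fractional solution $X$ where all entries are bounded below. By the robustness claim just proved, $\pi_n$ is $\epsilon'$-robust for large $n$. So Lemma~\ref{lem:non_vanishing_last_block} gives a constant $\delta_1>0$, depending only on $\epsilon'$ and $k$, with $x_{i,j}>\delta_1$ for all $j$ and all $i>I_k$. We first show that $I_k \le (1-\beta)n$ for some constant $\beta>0$. Column $k$ has total volume $\sum_i x_{i,k}=\alpha_k n$, and each $x_{i,k}\le 1$. Since $x_{i,k}=0$ for $i<I_k$, we get $n-I_k+1\ge \alpha_k n$, so the block $B=\{i: i>I_k\}$ has size at least $\alpha_k n - 1$. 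This is the main structural input; everything else is concentration.

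Translate to elements: row $i$ corresponds to the element $z=n-i+1$, so $B$ corresponds to the interval $Z=[1, m]$ of elements with $m\ge \alpha_k n-1$. Set $d=\alpha_k/4$. Fix $j'\neq j''$ and $\Delta\le dn$. Consider the pairs $(z,z-\Delta)$ with $z-\Delta, z\in Z$. Choose a family of such pairs that are pairwise disjoint as sets of elements; this is possible by taking $z$ in alternating runs of length $\Delta$, or more simply by noting that the graph of pairs is a union of paths, so a matching of size at least $(m-\Delta)/3\ge \alpha_k n/5$ exists for large $n$. For each chosen pair, the event $\{X_{n-z+1}=j',\ X_{n-z+\Delta+1}=j''\}$ has probability $x_{\cdot,j'}x_{\cdot,j''}>\delta_1^2$. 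These events are independent across disjoint pairs. Hence $R_{j',j'',\Delta}^{(L\ref{alg:before_while_1})}$ stochastically dominates a sum of at least $\alpha_k n/5$ independent indicators, each with mean greater than $\delta_1^2$. Its expectation is therefore at least $\delta_1^2\alpha_k n/5$.

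Set $\eta=\delta_1^2\alpha_k/10$ and apply Hoeffding \eqref{eq:Hoeffding} to this indicator sum with $t=\eta n$ and $b-a=1$. This gives failure probability at most $2e^{-2\eta^2 n/(\alpha_k/5)}\le 2e^{-c_0n}$ with $c_0=10\eta^2/\alpha_k$. Finally, take a union bound over at most $k^2$ ordered pairs $(j',j'')$ and at most $dn$ values of $\Delta$. The result is a bound $2k^2 dn\, e^{-c_0 n}<e^{-cn}$ for any $0<c<c_0$ and $n$ large.

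The expected obstacle is only bookkeeping. One point is ensuring that the pairs used are disjoint, which is needed for independence; the greedy matching handles this. The other is that $\delta_1$ and $I_k$ are uniform in $n$. Here we rely on the fact that the robustness constant $\epsilon'$ is independent of $n$. We also use that Lemma~\ref{lem:non_vanishing_last_block} holds for all $i>I_k$, rather than only for the rows covered by Lemma~\ref{lem:fluid_mixing_solution_structure}(3) that avoid the indices $I_j$.
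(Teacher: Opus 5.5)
Your proposal is correct and is essentially the paper's proof. Both arguments use robustness of $\pi_n$ and Lemma~\ref{lem:non_vanishing_last_block} to get $x_{i,j}>\delta$ on the last $n-I_k\ge\alpha_k n-1$ rows, and both pick pairwise disjoint pairs at distance $\Delta$ (the paper uses precisely your alternating-runs choice $i \bmod 2\Delta<\Delta$). This yields independent indicators of mean $>\delta^2$, and a Chernoff/Hoeffding bound plus a union bound over $(j',j'',\Delta)$ finishes the proof.

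Your explicit translation between row $i$ and element $n-i+1$ is actually more careful than the paper's write-up, which writes $1_{\{i\in Q_{j''}\}}$ while using the probabilities $x_{i,j''}$ of row $i$. One small fix: apply Hoeffding to exactly $\lceil \alpha_k n/5\rceil$ of the matched pairs, or use that there are at most $n$ of them. Otherwise the exponent you state, which assumes exactly $\alpha_k n/5$ summands, is not literally justified.
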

    \begin{claimproof}
        Let $(x_{ij})$ be the solution for the fluid mixing problem $\pi$ defined in line~\ref{alg:problem_solution} of the algorithm.
        
        Then, as the problem $\pi$ is $\epsilon'$-robust for some $\epsilon' > 0$, by Lemma~\ref{lem:non_vanishing_last_block}, there exists $\delta > 0$ so that $x_{ij} > \delta$ for all $j \in [k]$ and $i > I_k$. Furthermore, $n-I_k \ge \alpha_k n$, since the $k$-th column of the matrix $(x_{ij})$ sums up to $\alpha_k n$, while $x_{ik} = 0$ for $i < I_k$ and $x_{ik} \le 1$ for all $i$.

        We set $0 < d < \alpha_k/3$ and $0 < \eta < \delta^2 d/2$ and for the rest of the proof, for the sake of brevity, we omit the $(L4)$ superscript.
        Then, given $j' \ne j''$ and $\Delta \le d n$ we express the corresponding bad event $R_{j',j'',\Delta} < \eta n$ as
        \[
            1_{\{R_{j',j'',\Delta} < \eta n\}} 
              = \sum_{i=1}^{n-\Delta} 1_{\{i \in Q_{j''}\}} 1_{\{i+\Delta \in Q_{j'}\}}
              \ge \sum_{\substack{I_k < i \le n-\Delta \\ i \bmod 2\Delta < \Delta}} 1_{\{i \in Q_{j''}\}} 1_{\{i+\Delta \in Q_{j'}\}},
        \]
        where $1_A$ denote the indicator function of the event $A$.
        Because of the constraint $i \bmod 2\Delta < \Delta$, we have $i + \Delta \bmod 2\Delta \ge \Delta$. Therefore the sets $\{i,i+\Delta\}$ are pairwise disjoint, implying that the terms of the last sum are independent Bernoulli$(p_i)$ random variables with $p_i = x_{i,j''}x_{i+\Delta,j'} > \delta^2$. Therefore, as the number of summands is at least $\frac{d n}{2}$, the last sum is lower bounded by a binomial$\big(\frac{d n}{2},\delta^2\big)$ random variable. Therefore, using Chernoff's bound, there exists some constant $c'>0$ so that
        \[
            \Pr[R_{j',j'',\Delta} < \eta n] < \exp[-c' n].
        \]
        Performing a union bound on $j',j''$ and $\Delta$ yields the required result, for $c = c'/2$. 
    \end{claimproof}
    
    \begin{claim}\label{claim:linear_pairs_after}
        For $j',j''\in[k]$ and $\Delta \in \N$, let $R_{j',j'',\Delta}^{(L\ref{alg:after_while_1})}$ be the random variable defined by
        \begin{equation}\label{eq:linear_rv2}
            R_{j',j'',\Delta}^{(L\ref{alg:after_while_1})} 
              = \Big|\big\{(z',z'') : z' - z'' = \Delta \textrm{ and } z' \in Q_{j'}^{(L\ref{alg:after_while_1})}  \textrm{ and } z'' \in Q_{j''}^{(L\ref{alg:after_while_1})}\big\}\Big|.
        \end{equation}
        Then, there exist constants $d',\eta', c' > 0$ so that for a sufficiently large $n$
        \[
            \Pr\left[ \min_{j' \ne j'',\, \Delta \le d' n} R_{j',j'',\Delta}^{(L\ref{alg:after_while_1})} < \eta' n \right] < e^{-c' n}.
        \]
    \end{claim}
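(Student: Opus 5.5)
The plan is to derive Claim~\ref{claim:linear_pairs_after} from Claim~\ref{claim:linear_pairs_before} and Claim~\ref{claim:linear_iter}. The first while loop changes the sets only by moving single elements, and each iteration moves exactly one element. So if few iterations occur, few pairs $(z',z'')$ at a fixed difference $\Delta$ can be destroyed.

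First, take the constants $c,d,\eta$ from Claim~\ref{claim:linear_pairs_before}. Set $d'=d$, $\eta'=\eta/2$, and $\epsilon=\eta/4$.

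Next, fix $j'\ne j''$ and $\Delta\le dn$. The key observation is that a pair $(z',z'')$ with $z'-z''=\Delta$, $z'\in Q_{j'}^{(L\ref{alg:before_while_1})}$ and $z''\in Q_{j''}^{(L\ref{alg:before_while_1})}$ can fail to be counted in $R^{(L\ref{alg:after_while_1})}_{j',j'',\Delta}$ only if $z'$ or $z''$ was moved during the first loop. For fixed $\Delta$, each element $z$ belongs to at most two such pairs: one as the larger element $z'=z$ and one as the smaller element $z''=z$. The elements moved form a set of size at most $\iterI$. Hence, deterministically,
\[
R^{(L\ref{alg:after_while_1})}_{j',j'',\Delta}\ \ge\ R^{(L\ref{alg:before_while_1})}_{j',j'',\Delta}-2\,\iterI .
\]
An element moved twice only helps this count, since the bound uses the set of moved elements. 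Also, $R$ counts pairs present at the end, so newly created pairs only increase it. Note that this inequality holds simultaneously for all $j',j'',\Delta$ on the same sample.

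Then, on the intersection of two good events, the minimum over $j'\ne j''$ and $\Delta\le d'n$ of $R^{(L\ref{alg:after_while_1})}$ is at least $\eta n-2\epsilon n=\eta n/2=\eta' n$. The first good event is that $\min R^{(L\ref{alg:before_while_1})}\ge\eta n$, which fails with probability less than $e^{-cn}$. The second is that $\iterI\le\epsilon n$, which fails with probability less than $e^{-\delta n}$ by Claim~\ref{claim:linear_iter} applied with $\epsilon=\eta/4$.

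Finally, a union bound bounds the failure probability by $e^{-cn}+e^{-\delta n}<e^{-c'n}$ for any $0<c'<\min(c,\delta)$ and $n$ large.

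The only step needing care is the deterministic comparison. I must check that each moved element destroys at most two pairs for a given $\Delta$, and that this holds uniformly in $(j',j'',\Delta)$, so that no further union bound is needed beyond the one already inside Claim~\ref{claim:linear_pairs_before}.
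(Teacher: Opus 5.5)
Your proposal is correct and follows essentially the same route as the paper: a deterministic comparison $R^{(L\ref{alg:after_while_1})}_{j',j'',\Delta}\ge R^{(L\ref{alg:before_while_1})}_{j',j'',\Delta}-C\cdot\iterI$, combined with Claims~\ref{claim:linear_pairs_before} and~\ref{claim:linear_iter} and a union bound. The only difference is that you take $C=2$, whereas the paper takes $C=1$. The paper's constant is also valid: for $j'\ne j''$ an element lies in at most one of $Q_{j'},Q_{j''}$, hence in at most one counted pair. Your choice $\epsilon=\eta/4$ correctly compensates for the extra factor.
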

    \begin{claimproof}
        As each iteration of the first while loop removes some element $i \in [n]$ from some $Q_{j'}$, it follows that the right hand side of \eqref{eq:linear_rv2} may decrease only for $R_{j',\cdot,\cdot}$ and by at most one. Therefore, $R^{(L\ref{alg:after_while_1})}_{j',j'',\Delta} \ge R^{(L\ref{alg:before_while_1})}_{j',j'',\Delta} - \iterI$.
        It follows by Claim~\ref{claim:linear_iter}, that for any $\epsilon > 0$, there exists $\delta >0$, so that for a sufficiently large $n$, we have $\Pr[\iterI > \epsilon n] < e^{-\delta n}$ and therefore
        \[
            \Pr\left[  R^{(L\ref{alg:after_while_1})}_{j',j'',\Delta} < R^{(L\ref{alg:before_while_1})}_{j',j'',\Delta} - \epsilon n \right] < e^{-\delta n}.
        \]        
        By Claim~\ref{claim:linear_pairs_before} there exist $d,\eta, c > 0$ so that for a sufficiently large $n$
        \[
            \Pr\left[ \min_{j' \ne j'',\, \Delta \le d n} R_{j',j'',\Delta}^{(L\ref{alg:before_while_1})} < \eta n \right] < e^{-c n}.        
        \]
        The required result follows by setting $\epsilon = \eta/2$, $d' = d$, $\eta' = \eta/2$, for any $0 < c' < \min(c,\delta)$.
    \end{claimproof}

    \begin{claim}\label{claim:second_while}
        Given any partition $Q_1 \sqcup \cdots \sqcup Q_k = [n]$, for $\Delta \in \N$ and $j', j'' \in [k]$ let
        \[
            R_{j',j'',\Delta} = \Big|\big\{(z',z'') : z' - z'' = \Delta \textrm{ and } z' \in Q_{j'}  \textrm{ and } z'' \in Q_{j''} \big\}\Big|. 
        \]        
        If for some value of $Z,D$, the following two equalities hold :
        \begin{eqnarray*}
            \max_{j \in [k]} |S(Q_j) - \snk| &\le& Z,\\
            \min_{j' \ne j'',\, \Delta \le D} R_{j',j'',\Delta} &>& \left\lceil \frac{k Z}{2 D} \right\rceil + 2 k \left\lceil \log_2 D \right\rceil,
        \end{eqnarray*}
        then applying the second while loop to the sets $Q_1, \ldots, Q_k$ terminates successfully.
    \end{claim}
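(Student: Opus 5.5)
The plan is to treat the second while loop as a descent on the potential $\Phi=\sum_{j\in[k]}|S(Q_j)-\snk|$. By hypothesis $\Phi\le kZ$ at the start. Since $\sum_j S(Q_j)=k\snk$ is invariant, whenever some $S(Q_j)\ne\snk$ the maximizer $j'$ has $S(Q_{j'})>\snk$ and the minimizer $j''$ has $S(Q_{j''})<\snk$, so $\Delta\ge1$. A swap of $z'\in Q_{j'}$ with $z''\in Q_{j''}$, where $0<t=z'-z''\le\Delta$, decreases $S(Q_{j'})$ and increases $S(Q_{j''})$ by $t$ without overshooting $\snk$. It leaves all other sets and all sizes $|Q_j|$ unchanged. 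Hence each iteration decreases $\Phi$ by exactly $2t$. It therefore suffices to show that a suitable pair $(z',z'')$ always exists, and to bound the number of iterations.

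I would split the iterations into two types according to the current $\Delta$.
\begin{itemize}
\item \emph{Type A ($\Delta> D$).} If a pair with difference exactly $D$ is available between $Q_{j'}$ and $Q_{j''}$, the algorithm picks a pair with difference at least $D$, so $\Phi$ drops by at least $2D$. Hence there are at most $\lceil kZ/(2D)\rceil$ Type A iterations.
\item \emph{Type B ($\Delta\le D$).} If a pair with difference exactly $\Delta$ is available, the swap makes $S(Q_{j'})=\snk$ or $S(Q_{j''})=\snk$. A set with exact sum is never again chosen as $j'$ or $j''$ while some set is off, because then the maximum exceeds $\snk$ and the minimum is below it. Hence each Type B iteration permanently settles at least one set, giving at most $k$ such iterations.
\end{itemize}
If the simple "at most $k$" count for Type B does not fit the constants, I would instead use a halving argument, which is presumably the source of the $2k\lceil\log_2 D\rceil$ term. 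When the exact difference $\Delta$ is not available but smaller differences are, the largest available difference is at least $\lceil\Delta/2\rceil$. That swap at least halves $\min$(excess, deficit) for the pair involved, so each set can be involved in only $O(\log_2 D)$ Type B steps. In either version, the total number of iterations is at most $T=\lceil kZ/(2D)\rceil+2k\lceil\log_2 D\rceil$.

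It remains to guarantee that the needed pairs survive throughout the run. Each swap changes the membership of only two elements, and a given element lies in at most two pairs of a fixed difference $\Delta$ (as the larger or as the smaller element). So for each fixed $(j',j'',\Delta)$, a single swap reduces $R_{j',j'',\Delta}$ by only a bounded amount. Therefore, if initially $\min_{j'\ne j'',\,\Delta\le D}R_{j',j'',\Delta}$ exceeds the total possible erosion over $T$ iterations, the required pair of difference $\min(\Delta,D)$ exists at every step. In particular the \textbf{else} branch returning Failure is never reached, and the loop terminates with all sums equal to $\snk$.

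The main obstacle is the bookkeeping in this last step: matching the erosion per swap against the stated threshold $\lceil kZ/(2D)\rceil+2k\lceil\log_2 D\rceil$. A naive bound is up to four destroyed pairs per swap. To get within the stated constant, I would first refine the Type B count using the halving argument, so that only iterations that actually require an exact-$\Delta$ pair are charged. I would also charge destroyed pairs only to the specific ordered pair $(j',j'')$ being used in that iteration.
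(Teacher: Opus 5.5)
Your potential-function bookkeeping and the Type~A/Type~B split are sound, and they follow the same skeleton as the paper (the paper's $y_P$ is your $\Phi/2$). First you bound the number of iterations, then you argue that the counts $R_{j',j'',\Delta}$ with $\Delta\le D$ cannot run out before termination.

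Your Type~B count is cleaner than the paper's stage/halving decomposition. As long as these counts stay positive, an iteration with $\Delta\le D$ always finds a pair of difference exactly $\Delta$ and settles a set permanently. So there are at most $k-1$ such iterations, and the halving stages are never needed. For $D=1$ your total $\lceil kZ/2\rceil+k$ exceeds $T=\lceil kZ/2\rceil$; but there every iteration lowers $\Phi$ by at least $2=2D$, so the Type~A bound already covers all iterations. Drop the fallback halving sketch: nothing supports ``the largest available difference is at least $\lceil\Delta/2\rceil$'', and you do not need it.

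\textbf{The gap.} The last step is left as a conditional. To use the hypothesis $\min R>T$, you need each iteration to destroy at most one pair of any triple that is needed later. The paper closes this step with a one-line assertion: a swap lowers only $R_{j',\cdot,\cdot}$ and $R_{\cdot,j'',\cdot}$, each by at most one. Your suspicion is justified, because this fails for the triple in use. Take $z'=10\in Q_{j'}$, $z''=5\in Q_{j''}$, $7\in Q_{j'}$, $8\in Q_{j''}$, $12\notin Q_{j'}$ and $3\notin Q_{j''}$. The swap destroys both $(10,8)$ and $(7,5)$ in $R_{j',j'',2}$ and creates no new pair.

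This rules out both of your proposed repairs:
\begin{itemize}
\item Charging losses only to the ordered pair $(j',j'')$ in use still costs two per iteration, and for $k=2$ every iteration uses the same pair.
\item Charging only the iterations that need an exact-$\Delta$ pair is not legitimate, since Type~A swaps also destroy the pairs that later Type~B steps rely on.
\end{itemize}

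\textbf{What can actually be proved.} Your argument, or the paper's once corrected, yields the claim with a doubled threshold, e.g.\ $\min R>2\bigl(\lceil kZ/(2D)\rceil+k\bigr)$. This is all the proof of Theorem~\ref{thm:alg_solves_linear_partition_whp} needs: replace the final choice $\epsilon<2d'\eta'/k$ by $\epsilon<d'\eta'/k$.

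To recover the stated constant in most cases, charge per set rather than per iteration. A needed triple $(a,b,\cdot)$ has $y_a>0$ and $y_b<0$ throughout, so it loses a pair only in iterations with $j'=a$ or $j''=b$. The index $a$ is $j'$ in fewer than $Z/D$ Type~A iterations, since each lowers $y_a$ by at least $D$. It is $j'$ in at most $k-2$ Type~B iterations, since each settles a distinct negative set other than $b$. The same holds symmetrically for $b$. The total loss is therefore less than $2Z/D+2k$, which is at most $\lceil kZ/(2D)\rceil+2k\lceil\log_2 D\rceil$ when $k\ge 4$ and $D\ge 2$. For $k\in\{2,3\}$, neither argument establishes the constant as stated.
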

    \begin{claimproof}
        Let $\iterII$ denote the number of iterations of the second while loop (until the successful completion or failure), for $j \in [k]$, let $Q^{(l)}_j$ denote the set $Q_j$ after $l$ iterations of the second while loop, where $0 \le l \le \iterII$, and let $R^{(l)}_{j',j'',\Delta}$ be defined accordingly.
        Let $y^{(l)}_j$ denote $S(Q^{(l)}_j) - \snk$ for all choices of $j,l$. Then, for any $j$, the sequence $y^{(0)}_j, y^{(1)}_j, \ldots, y^{(\iterII)}_j$ is monotone non-increasing in magnitude and does not cross zero. 
        We denote $J_P = \{j:y^{(0)}_j > 0\}$, $J_N = \{j:y^{(0)}_j < 0\}$, $y^{(l)}_P = \sum_{j \in J_P} y^{(l)}_j$ and $y^{(l)}_N = \sum_{j \in J_N} y^{(l)}_j$.
        Also, we denote $y^{(l)}_{\min} = \min_{j \in [k]} y^{(l)}_j$ and $y^{(l)}_{\max} = \max_{j \in [k]} y^{(l)}_j$.
        Then, $y^{(l)}_{\min} \le 0$ and $y^{(l)}_{\max} \ge 0$ for all $l$, with equality to zero only if $l = \iterII$ and the algorithm terminates successfully. 
        Finally, let $\delta^{(l)}$ be the difference $z'-z''$ in line~\ref{alg:2nd_while_z} of iteration $l$, where $l=0,\ldots,\iterII-1$.
        
        We divide the while loop iterations into stages, where stage $s$ starts at iteration $l_s$ and ends at iteration $l_{s+1}-1$,
        with $l_0 = 0$ and for $s \ge 1$,
        \[
            l_s = \min \left\{ l : y^{(l)}_{\max} \le \frac{D}{2^{s-1}} \textrm{ or } y^{(l)}_{\min} \ge -\frac{D}{2^{s-1}} \right\}.
        \] 
        Note that some of the stages may be empty, and that $l_s = \iterII$ for any $s$ such that $2^{s-1} > D$.
        
        Next, we bound $\iterII$ from above. We do this by upper-bounding the length of each stage and the number of stages.
        First, we claim that $l_1 \le \lceil \frac{k Z}{2 D} \rceil$.
        Indeed, we have 
        \[
            y^{(0)}_P - y^{(0)}_N = \sum_{j \in [k]} |y^{(0)}_j| \le k Z, \qquad\text{ and }\qquad
            y^{(0)}_P + y^{(0)}_N = 0.            
        \]
        Therefore, $y^{(0)}_P = -y^{(0)}_N \le k Z /2$.
        Since $\delta^{(l)} \ge D$, for any iteration $l$ in stage 1, the stage terminates after at most $\lceil \frac{k Z}{2D} \rceil$ steps.
        We now show that $l_{s+1}-l_s \le 2k$ for all $s \ge 1$. 
        By the definition of $l_s$, we have $\min\!\left( y^{(l_s)}_{\max}, -y^{(l_s)}_{\min}\right) \le D/2^{s-1}$, 
        implying that $y^{(l_s)}_P = -y^{(l_s)}_N \le (k-1)D/2^{s-1}$. 
        Since all iterations in stage $s$ have $\delta^{(l)} \ge D/2^s$, that stage terminates in at most $\left\lceil (\frac{(k-1)D}{2^{s-1}})/(\frac{D}{2^s}) \right\rceil \le 2k$ steps.

        As $l_s = \iterII$ for any $s \ge 1 + \lceil\log_2 D\rceil$, we conclude that
        \begin{equation}\label{eq:iterII_ub}
            \iterII \le \left\lceil \frac{k Z}{2 D} \right\rceil + 2 k \left\lceil \log_2 D \right\rceil.            
        \end{equation}
        As in iteration $l+1$ of the while loop, line~\ref{alg:2nd_while_z} swaps $z' \in Q^{(l)}_{j'}$ and $z'' \in Q^{(l)}_{j''}$, only $R^{(l)}_{j',\cdot,\cdot}$ and $R^{(l)}_{\cdot,j'',\cdot}$ may decrease and by at most one.
        It follows that under the claim assumptions, the while loop terminates before any of the $R^{(l)}_{\cdot,\cdot,\cdot}$ becomes zero.
    \end{claimproof}

    Finally, in order to prove Theorem~\ref{thm:alg_solves_linear_partition_whp}, we observe that by Claim~\ref{claim:linear_pairs_after}, there are constants $d',\eta', c' > 0$ so that for a sufficiently large $n$, we have
    \[
        \Pr\left[ \min_{j' \ne j'',\, \Delta \le d' n} R_{j',j'',\Delta}^{(L\ref{alg:after_while_1})} < \eta' n \right] < e^{-c' n}.
    \]
    Also, by Claim~\ref{claim:linear_vm}, for any $\epsilon > 0$, there exist $\delta>0$ so that for a sufficiently large $n$,
    \[
        \Pr\!\left[ \max_{j \in [k]} \Big| S(Q_j^{(L\ref{alg:after_while_1})}) - \snk \Big| > \epsilon n^2 \right] < e^{-\delta n}.
    \]
    Therefore, by Claim~\ref{claim:second_while} with $Z = \epsilon n^2$ and $D = d'n$, it follows that for a sufficiently large $n$ the algorithm terminates successfully with failure probability bounded by $e^{-c' n} + e^{-\delta n}$, if the following inequality holds:
    \[
        \eta' n > \left\lceil \frac{k Z}{2 D} \right\rceil + 2 k \left\lceil \log_2 D \right\rceil 
                = \left\lceil \frac{k \epsilon }{2 d'} n \right\rceil + 2 k \left\lceil \log_2(d'n) \right\rceil.
    \]
    As choosing any $0 < \epsilon < \frac{2 d' \eta'}{k}$ the inequality is satisfied for a sufficiently large $n$, yielding the required result.
\end{proof}

%%%%%%%%%%%%%%%%%%%%%%%%%%%%%%%%%%%%%%%%%%%%%%%%%%%%%%%%%%%%%%%%%%%%%%%%%%%%%%%%%%%%%%%%%%%%%%%%%%%%%%%%%%%
\section{Discussion}
Originally motivated by graph-labeling problems, this paper investigates Problem~\ref{prob:nk_partition}: the partitioning of $[n]$ into equal-sum sets of prescribed sizes. 
We term an instance of this problem \emph{ESPP-instance} (Equal Size Partition Problem). 

As demonstrated in Section~\ref{sec:fractional}, the \emph{slack condition}, which is the natural necessary condition for solvability of an ESPP-instance, is also a sufficient condition for the fractional relaxation of the problem. This provides a link to diverse fields, including operations research. 
While the slack condition is known to be sufficient to guarantee a solution when the number of parts is at most four~\cite{beena2009,kotlar16}, recent work \cite{ebrahem2024} has shown this does not hold in general; there exist instances of the problem that are not solvable, despite satisfying the slack condition~\cite{ebrahem2024}.

In this work, we approach this discrepancy from two different angles. First, we prove that the class of linear equal-sum partition problem families satisfying a strengthened version of the slack condition is solvable for sufficiently large n. Furthermore, we provide a randomized algorithm for finding such partitions that fails with only exponentially small probability. Second, we identify new non-solvable families of ESPP-instances, further clarifying the boundary of the problem.

Although a complete characterization remains elusive, and we conjecture the general problem to be NP-complete, our results suggest several promising paths forward. We conclude with a list of open problems and directions for future investigation:

\begin{itemize}
    \item All currently known examples of non-solvable ESPP instances contain (many) parts of size 2. Since our search has not produced so far non‑solvable ESPP instances such that the smallest part is of size at least 3, we propose two contrasting avenues for further investigation: either prove that ESPP instances are solvable when the partition contains no parts of size 2, or develop criteria for non‑solvability that apply to such instances, and for which the corresponding family of examples is non‑empty. The criteria introduced in \cite{ebrahem2024} for partitions whose smallest parts have size 2 may offer useful insight in this direction.

    \item Define conditions that would constitute a sufficient set of conditions for solvability, or

    \item Demonstrate that the general version of Problem~\ref{prob:nk_partition} is NP‑complete. The two criteria established in \cite{ebrahem2024}, together with the additional criterion in Subsection~\ref{sec:criterion}, indicate that the problem is more intricate than previously assumed. In particular, they highlight the inadequacy of the earlier, overly optimistic conjecture that the slack condition alone would suffice.

    \item Extend the result of Section~\ref{sec:linear} to other families of partitions. For example, when $k$ grows with $n$, or when the sizes of the parts are not linear in $n$ but rather $o(n)$.

    \item Generalize Problem~\ref{prob:nk_partition} and the slack condition to an arbitrary set of numbers, instead of just $[n]$, or to the case where the sums of the parts are some given values that are not necessarily equal. For the latter case, the situation involving undetermined part sizes was treated in \cite{chen2005}.
\end{itemize}

\section*{Acknowledgements}

The authors would like to thank 
Boris Gurevich for writing code for searching for counterexamples, and Abthal Heeb for writing the code that tested the algorithms presented in this work.

%%%%%%%%%%%%%%%%%%%%%%%%%%%%%%%%%%%%%%%%%%%%%%%%%%%%%%%%%%%%%%%%%%%%%%%%%%%%%%%%%%%%%%%%%%%%%%%%%%%%%%%%%%%

\bibliographystyle{abbrv}
\bibliography{refs}

\newpage
\appendix
\section{Iterative algorithm for solving the fluid mixing problem}\label{app:iterative}

\begin{algorithm}[H]
\caption{Iterative Solution For The Fluid Mixing Problem}\label{alg:fluid_mixing_solution}
\begin{algorithmic}[1]
    \Require A fluid mixing problem $\pi=(\overline{a}, \overline{u}, \overline{b}, \overline{v})$ with $n$ source containers and $k$ target containers with $\slack(\pi) \ge 0$
    \Ensure  A solution $X = \{x_{i,j}\}_{i\in[n], j\in[k]}$ for $\pi$ 
    \State $X \gets 0_{n \times k}$, 
    \State $i \gets 1$, $j \gets 1$, $B \gets b_1$, $V \gets v_1$,  $\overline{w} \gets \delta_1 \in \R_{\ge0}^k$
    \While{$\|u\|_1 > 0$}
        \If{V = 0} \Comment{Implies that $B=0$, $j<k$ and $\slack_j(\pi) = 0$}
            \State $j \gets j+1$
            \State $V = v_j$, $B \gets b_j$, $\overline{w} \gets \delta_j$
        \EndIf
        \If{$u_i = 0$}
            \State $i \gets i+1$
        \EndIf
        \While{$j<k$ and $B/V = b_{j+1}/v_{j+1}$}
            \State $j \gets j+1$
            \State $\gamma \gets V/(V+v_j)$
            \State $\overline{w} \gets \gamma \overline{w} + (1-\gamma)\delta_j$
            \State $V \gets V + v_j$, $B \gets B + b_j$
        \EndWhile
        \State $\lambda \gets \min(1,V/u_i)$
        \If{$j<k$ and $\slack_j(\pi) > 0$}
            \State $\lambda \gets \min(\lambda,\frac{B v_{j+1} \,-\, V b_{j+1}}{a_i v_{j+1} \,-\, u_i b_{j+1}})$
        \EndIf
        \State $a_i \gets a_i - \lambda a_i$, $u_i \gets u_i - \lambda u_i$
        \State $B \gets B - \lambda a_i$, $V \gets V - \lambda u_i$
        \State $X_{i,\cdot} \gets X_{i,\cdot} + \lambda \overline{w}$
    \EndWhile
\end{algorithmic}
\end{algorithm}
\end{document}